\newcounter{ipotesi}
 \makeatletter \@addtoreset{equation}{section}
\newtheorem{theorem}{Theorem}[section]
\newtheorem{hypotheses}[theorem]{Hypotheses}{\rm}
\newtheorem{hypothesis}[theorem]{Hypothesis}{\rm}
\newtheorem{hyps}[theorem]{Hypotheses}{\rm}
\newtheorem{hyps0}[theorem]{Hypotheses}{\rm}
\newtheorem{hyps00}[theorem]{Hypotheses}{\rm}
\newtheorem{hyps1}[theorem]{Hypothesis}{\rm}
\newtheorem{lemma}[theorem]{Lemma}
\newtheorem{corollary}[theorem]{Corollary}
\newtheorem{proposition}[theorem]{Proposition}
\newtheorem{remark}[theorem]{Remark}{\rm}
\newtheorem{example}[theorem]{Example}
\newcounter{parentenv}
\newcommand{\hs}[1]{\hskip -#1pt}
\newcommand{\R}{{\mathbb R}}
\newcommand{\N}{{\mathbb N}}
\newcommand{\Rd}{\mathbb R^d}
\newcommand{\A}{\bm{\mathcal A}}
\newcommand{\ve}{\varepsilon}
\newcommand{\s}{\sigma}
\newcommand{\si}{\sum_{i=1}^d}
\newcommand{\sj}{\sum_{j=1}^d}
\newcommand{\sij}{\sum_{i,j=1}^d}
\newcommand{\sijh}{\sum_{i,j,h=1}^d}
\newcommand{\sijhk}{\sum_{i,j,h,k=1}^d}
\newcommand{\shk}{\sum_{h,k=1}^d}
\newcommand{\sijhkl}{\sum_{i,j,h,k,l=1}^d}
\newcommand{\qq}{\qquad}
\newcommand{\q}{\quad}
\newcommand\ds{\displaystyle}
\begin{document}

\title[On the estimates of the derivatives of solutions]{On the estimates of the derivatives of solutions to nonautonomous Kolmogorov equations and their consequences}
\thanks{The authors are members of G.N.A.M.P.A. of the Italian Istituto Nazionale di Alta Matematica (INdAM). Work partially supported by the INdAM-GNAMPA Project 2016 ``Equazioni e sistemi di equazioni ellittiche e paraboliche associate ad operatori con coefficienti
illimitati e discontinui''.}
\author[L. Angiuli, L. Lorenzi]{Luciana Angiuli, Luca Lorenzi}
\address{L.A.: Dipartimento di Matematica e Fisica ``Ennio De Giorgi'', Universit\`a del Salento, Via per Arnesano, I-73100 LECCE (Italy)}
\address{L.L.: Dipartimento di Matematica e Informatica, Universit\`a degli Studi di Parma, Parco Area delle Scienze 53/A, I-43124 PARMA (Italy)}
\email{luciana.angiuli@unisalento.it}
\email{luca.lorenzi@unipr.it}
\maketitle
\vspace{1,5cm}
\begin{center}
\begin{minipage}[t]{10cm}

\small{ \noindent \textbf{Abstract.}
We consider evolution operators $G(t,s)$ associated to a class of nonautonomous elliptic operators with
unbounded coefficients, in the space of bounded and continuous functions over $\Rd$. We prove some new pointwise estimates for the spatial derivatives of the function
$G(t,s)f$, when $f$ is bounded and continuous or much smoother. We then use these estimates to prove smoothing effects of the evolution operator in $L^p$-spaces.
Finally, we show how pointwise gradient estimates have been used in the literature to study the asymptotic behaviour of the evolution operator and to prove
summability improving results in the $L^p$-spaces related to the so-called tight evolution system of measures.
\medskip

\noindent \textbf{Keywords.} Elliptic operators with unbounded coefficients, estimates of the spatial derivatives, evolution system of measures, logarithmic Sobolev inequalities, Poincar\'e inequality, asymptotic behaviour, summability improving properties.
\medskip

\noindent \textbf{Mathematics~Subject~Classification~(2010):}
35K10, 35K15, 35B40.

}
\end{minipage}
\end{center}

\bigskip

\section{Introduction}

In this paper we deal with nonautonomous
Kolmogorov elliptic operators
 formally defined on smooth functions $\psi:\Rd\to\R$ by
\begin{align}
(\A\psi)(t,x)=&\sum_{i,j=1}^dq_{ij}(t,x)D_{ij}\psi(x)+\sum_{j=1}^db_j(t,x)D_j\psi(x)+c(t,x)\psi(x)\notag\\
=&{\rm Tr}(Q(t,x)D^2\psi(x))+\langle b(t,x), \nabla
\psi(x)\rangle+c(t,x)\psi(x).
\label{oper-A}
\end{align}
for any $(t,x)\in I\times\Rd$, where $I$ is a right-halfline (possibly
$I=\R$).
In \cite{AngLor10Com, KunLorLun09Non} it has been proved that,
under mild assumptions on the possibly unbounded
coefficients $q_{ij}, b_i$ and $c$,
an evolution operator $(G(t,s))_{t \ge s\in I}$\footnote{In the rest of the paper, we will denote the evolution operator simply by $G(t,s)$.}
can be associated to the operator $\A$ in $C_b(\Rd)$:
for any $f\in C_b(\Rd)$ and $I\ni s<t$,
$G(t,s)f$ is the value at $t$ of the unique solution
$u\in C([s,+\infty)\times\Rd)\cap C^{1,2}((s,+\infty)\times\Rd)$
to the Cauchy problem
\begin{equation}
\left\{
\begin{array}{ll}
D_tu=\A u, & {\rm in}~(s,+\infty)\times\Rd,\\[1mm]
u(s,\cdot)=f, & {\rm in}~\Rd,
\end{array}
\right.
\label{dreams}
\end{equation}
which is bounded in each strip $[s,s+T]\times\Rd$.
In Section \ref{sect-2} we will show how a solution to problem \eqref{dreams} with the previous properties can
be obtained.

In recent years several properties of the family $G(t,s)$ have been
investigated in the space of bounded and continuous functions over $\Rd$.
Among all of them, uniform estimates for the derivatives of $G(t,s)f$ have
played an important role to prove
existence, uniqueness and optimal Schauder estimates for the classical
solution to the nonhomogeneous Cauchy problem
\begin{equation}
\left\{
\begin{array}{lll}
D_tu(t,x)=\A u(t,x)+g(t,x), & t\in [s,T], &x\in\R^d,\\[1.5mm]
u(s,x)=f(x), && x\in\R^d
\end{array}
\right. \label{nonom}
\end{equation}
for any $s\in I$ and $T>s$.
In \cite{Lorenzi-1}, global uniform estimates for the first-, second- and
third-order derivatives of $G(t,s)f$ have been proved under
 growth and dissipativity assumptions on the coefficients of $\A$. Note
that some dissipativity condition is necessary, as the one-dimensional
example in \cite[Example 5.1.12]{LorLibro-2} shows. Actually, to get existence and uniqueness of the
bounded solution to \eqref{nonom} which belongs to $C([s,T]\times\Rd)\cap C^{1,2}((s,T]\times\Rd)$, local uniform estimates for the derivatives of
$G(t,s)f$ are enough (see \cite{AL} where the semilinear equation
$D_tu=\A u+\psi(u)$ is considered).
Uniform gradient estimates have also been proved for the solution of the Cauchy
problem associated to $\A$ in $C_b(\Omega)$ ($\Omega\subset \Rd$
being unbounded with smooth boundary) with homogeneous non-tangential boundary
conditions, see \cite{AngLorNon}.

On the other hand, as in the classical case of bounded coefficients, it is
natural to extend each operator $G(t,s)$ to some $L^p$-space.
The autonomous case shows that the usual $L^p$-spaces related to the
Lebesgue measure are not the best choice as possible, see Example \ref{ex-PRS}. Sufficient conditions have been proved in
\cite{AngLor10Com} for $G(t,s)$ to preserve $L^p(\Rd)$ (see also \cite{AngLorPal16Lpe} for the case when the elliptic operator \eqref{oper-A} is replaced by
a system of elliptic operators), which, in particular, imply
rather strong growth assumptions on the coefficients of the operator $\A$.

The autonomous case shows that the right $L^p$-spaces where to study the
semigroup $T(t)$ (the autonomous counterpart of the evolution operator) are
those related to the so-called invariant measure $\mu$, a probability
measure, which exists under an additional algebraic assumption on the coefficients of
the operator $\A$ and it is characterized by the {\it invariance property}
\begin{eqnarray*}
\int_{\Rd}T(t)fd\mu=\int_{\Rd}fd\mu,\qquad\;\,f\in C_b(\Rd).
\end{eqnarray*}

In the nonautonomous case the situation is quite different. Indeed, the
invariant measure is replaced by a one parameter family of probability
measures $\{\mu_t: t\in I\}$,
usually referred to as {\it evolution system of measures}. Whenever such a
family exists, it is characterized by the property
\begin{equation}\label{invariance}
\int_{\Rd}G(t,s)f d\mu_t=\int_{\Rd}f d\mu_s,\qquad\;\,I\ni s<t,\;\,f\in
C_b(\Rd).
\end{equation}
Through this formula, each operator $G(t,s)$ can be extended to a
contraction mapping $L^p(\Rd,\mu_s)$ into $L^p(\Rd,\mu_t)$. The following
facts make the analysis
of the evolution operator in these $L^p$-spaces much more difficult, than
in the autonomous case:
\begin{itemize}
\item
evolution systems of measures are infinitely many in general and not explicit (except in some special case);
\item
for different values of $s$ and $t$, the measures $\mu_s$ and $\mu_t$
differ in general: even  if they are equivalent, since they are both
equivalent to the Lebesgue measure,
the spaces $L^p(\Rd,\mu_s)$ and $L^p(\Rd,\mu_t)$ are different.
\end{itemize}
Hence, to study the evolution operator $G(t,s)$ in this $L^p$-setting one cannot take
advantage of the classical results, which require to work in spaces
$L^p(J;X)$, where $J$ is an interval: in our situation $X$ depends on $t$!

Among the infinitely many evolution systems of measures, the ``more important'' ones are the tight evolution systems,
where tight means that for any $\varepsilon>0$ there exists $R_\varepsilon>0$
such that for any $t\in I$, $\mu_{t}(B_{R_\varepsilon})>1-\varepsilon$.
The family of tight evolution system of measures reduces to a unique system
when, for example, the evolution operator $G(t,s)$ satisfies the
pointwise gradient estimate
\begin{equation}\label{poi-es}
|\nabla_x (G(t,s)f)(x)| \le e^{\sigma(t-s)}(G(t,s)|\nabla f|)(x),\qquad
t>s,\;\, x \in \Rd
 \end{equation}
for any $f \in C^1_b(\Rd)$ and some constant $\sigma<0$. Estimate \eqref{poi-es} is
the key tool to prove
 a lot of important properties of the evolution operator $G(t,s)$
in the $L^p$-spaces related to the tight evolution system of
measures.

The aim of this paper is twofold. First, in Section \ref{sect-4} we prove different types of pointwise
estimates for the first-, second- and third-order spatial derivatives of $G(t,s)f$.
More precisely, we provide sufficient conditions for the estimates
\begin{align}
&|D^k_xG(t,s)f|^p\le \Gamma_{p,k}^{(1)}(t-s)G(t,s)\bigg (\sum_{j=1}^k|D^jf|^2\bigg )^{\frac{p}{2}},
\label{aa}
\\
&|D^k_xG(t,s)f|^p\le \Gamma_{p,h,k}^{(2)}(t-s) G(t,s)\bigg(\sum_{j=0}^{h}|D^j f|^2\bigg )^{\frac{p}{2}}
\label{aaaa}
\end{align}
to hold in $\Rd$ for any $t>s\in I$, $h\in\{0,\ldots,k\}$, $k=1,2,3$ and $p\in (p^*,+\infty)$ for a suitable $p^*\in [1,+\infty)$,
where $\Gamma_{p,k}^{(1)}$ and $\Gamma_{p,h,k}^{(2)}$ are positive functions.
All of these estimates are proved by using a variant of the maximum
principle for operator with unbounded coefficients and, as one expects,
 they are derived under more restrictive assumptions on the coefficients
of $\A$. We deal also with the case $p=1$ which is much more delicate and requires stronger assumptions.
Indeed, as \cite{Angiuli} shows, the algebraic condition
$D_l q_{ij}+D_i q_{lj}+D_j q_{il}=0$ in $I\times \Rd$ for any $i,j,l\in\{1,\ldots,d\}$ with
$i\neq l\neq j$, is a necessary condition for \eqref{aa} (with $k=1$) to hold.
For this reason many results are proved assuming that the diffusion
coefficients do not depend on the spatial variable.

Next in Section \ref{sect-5} we present many interesting consequences of the previous estimates in the study of
$G(t,s)$ in $L^p$-spaces. In particular, we stress the prominent role played by estimate \eqref{poi-es},
illustrating its main applications known in the literature.
First of all, estimate \eqref{poi-es} allows to prove a logarithmic Sobolev inequality for the unique tight evolution system of
measures $\mu_t$, i.e., the estimate
\begin{align}
\int_{\Rd} |f|^p\log|f|^p d\mu_s\leq &\|f\|_{L^p(\Rd,\mu_s)}^p\log(\|f\|_{L^p(\Rd,\mu_s)}^p)\notag\\
&+C_p\int_{\Rd}|f|^{p-2}|\nabla f|^2\chi_{\{f\neq 0\}}d\mu_s,
\label{Log_Sob-1}
\end{align}
for any smooth enough function $f$ and some positive constant $C_p$, independent of $f$.
Besides its own interest, which consists of the fact that \eqref{Log_Sob-1} is the counterpart of the Sobolev embeddings which fail in the $L^p$-spaces related to the measures $\mu_t$, see Example \ref{ex-noSob}, inequality \eqref{Log_Sob-1} is crucial to deduce the hypercontractivity of the operator $G(t,s)$ in the $L^p$-spaces related to $\mu_t$. Further and stronger summability improving properties of the operator $G(t,s)$ are also investigated and, in most the cases, a characterization of them is given in \cite{AngLorOnI}.

In the last subsection we deal with the time behaviour of $G(t,s)f$, as $t\to +\infty$, when $f \in L^p(\Rd, \mu_s)$.
Using the hypercontractivity of $G(t,s)$ and the Poincar\'e inequality in $L^2(\Rd, \mu_s)$, we connect the decay rate to zero of $\|G(t,s)f-\overline{f}_s\|_{L^p(\Rd, \mu_t)}$ to the decay rate to zero of $\|\nabla_x G(t,s)f\|_{L^p(\Rd, \mu_t)}$ as $t \to +\infty$, obtaining as a consequence an exponential decay rate to zero of $\|G(t,s)f-\overline{f}_s\|_{L^p(\Rd, \mu_t)}$ as $t \to +\infty$. Here, $\overline{f}_s$ denotes the average of $f$ with respect to $\mu_s$.
All these results are based heavily on the estimate \eqref{poi-es} whose validity, as already observed, is guaranteed under quite stronger assumptions on the coefficients of $\A$.
The convergence to zero of $\|G(t,s)f-\overline{f}_s\|_{L^p(\Rd, \mu_t)}$ can also be proved without assuming the validity of gradient estimates of negative type, using different argument that we present
with some details. As a matter of fact, in this situation, we can not prove an exponential decay rate.

We point out that the convergence results are quite involved since the measures $\mu_t$ depend themselves explicitly on time too.

\paragraph*{Notations}
Throughout the paper we use the subscripts ``$b$'' and ``$c$'', which stand for ``bounded'' and ``compactly supported''.
For instance, $C_b(\Rd)$ denotes the set of all bounded and continuous functions
$f:\Rd\to\R$. We endow it with the sup-norm $\|\cdot\|_{\infty}$.
For any $k>0$ (possibly $k=+\infty$), $C^k_b(\Rd)$ denotes the
subset of $C_b(\Rd)$ of all functions $f:\Rd\to\R$ that are
continuously differentiable in $\Rd$ up to $[k]$th-order, with
bounded derivatives and such that the $[k]$th-order derivatives are
$(k-[k])$-H\"older continuous in $\Rd$. $C^k_b(\Rd)$ is endowed
with the norm $\|f\|_{C^k_b(\Rd)}:=\sum_{|\alpha|\le
[k]}\|D^{\alpha}f\|_{\infty}
+\sum_{|\alpha|=[k]}[D^{\alpha}f]_{C^{k-[k]}_b(\Rd)}$.
For any domain $D\subset\R^{d+1}$ and $\alpha\in (0,1)$,
$C^{\alpha/2,\alpha}(D)$ denotes the space of all
H\"older-continuous functions with respect to the parabolic distance
of $\R^{d+1}$. Similarly, for any $h,k\in\mathbb N\cup\{0\}$ and
$\alpha\in [0,1)$, $C^{h+\alpha/2,k+\alpha}(D)$ denotes the set of
all functions $f:D\to\R$ which (i) are continuously differentiable
in $D$ up to the $h$th-order with respect to time variable, and up
to the $k$th-order with respect to the spatial variables, (ii) the
derivatives of maximum order are in $C^{\alpha/2,\alpha}(D)$ (here,
$C^{0,0}:=C$). By
$C^{h+\alpha/2,k+\alpha}_{\rm loc}(D)$ we denote the set of all
functions $f:D\to\R$ which are in $C^{h+\alpha/2,k+\alpha}(D_0)$ for
any compact set $D_0\subset D$.
For any measure positive $\mu$, the Sobolev space $W^{k,p}(\Rd,\mu)$ ($k\in\N\cup\{0\}$ $p\in [1,+\infty]$)
is the set of all functions $f\in L^p(\Rd,\mu)$, whose distributional derivatives up
to the $k$-th-order are in $L^p(\Rd,\mu)$. It is normed by setting
$\|f\|_{W^{k,p}(\Rd,\mu)}=\sum_{j=0}^k\|D^jf\|_{L^p(\Rd,\mu)}$. When $\mu$ is the Lebesgue measure
we simply write $W^{k,p}(\Rd)$.
For any real function $f$ we denote by $f^+$ and $f^-$ respectively its positive and negative part.
Finally, by $B_r$ and $\mathds{1}$ we denote, respectively, the open ball in $\Rd$ centered at
the origin with radius $r$  and
the function identically equal to one in $\Rd$.

\section{Main assumptions and preliminaries}
\label{sect-2}

Throughout the paper, we assume the following conditions on the coefficients of the operator $\A$ in \eqref{oper-A}.
\begin{hypotheses}
\label{hyp-1}
\begin{enumerate}[\rm (i)]
\item
$q_{ij}, b_i, c$ belong to $C^{\alpha/2,\alpha}_{\rm loc}(I\times \Rd)$ for some $\alpha \in (0,1)$ and any $i,j=1, \ldots,d$;
\item
the matrix $Q(t,x)$ is symmetric for any $(t,x)\in I\times \Rd$ and $\nu_0:=\inf_{I\times \Rd}\nu>0$ where
$\nu(t,x)$ is the minimum of the eigenvalues of $Q(t,x)$;
\item
$c_0:=\sup_{I\times \Rd}c<+\infty$;
\item
there exist a positive function $\varphi:\Rd\to\R$ blowing up as $|x|$ tends to $+\infty$, and, for any $[a,b]\subset I$, a positive constant $\lambda_{a,b}$ such that $\A\varphi\le\lambda_{a,b}\varphi$
in $[a,b]\times\Rd$.
\end{enumerate}
\end{hypotheses}

Under the previous set of assumptions in \cite{AngLor10Com,KunLorLun09Non} it has been proved that, for any $f\in C_b(\Rd)$ and $s\in I$, the Cauchy problem
\eqref{dreams} admits a unique solution $u\in C([s,+\infty)\times\Rd)\cap C^{1,2}((s,+\infty)\times\Rd)$ (a so-called {\it classical solution}) which is bounded in the strip $[s,T]\times\Rd$
for any $T>s$. In addition, $u$ satisfies the estimate
\begin{equation}
\|u(t,\cdot)\|_{\infty}\le e^{c_0(t-s)}\|f\|_{\infty},\qquad\;\,t>s.
\label{reality}
\end{equation}

Actually, the existence of a solution to problem \eqref{dreams} can be proved also without Hypothesis \ref{hyp-1}(iv) (which is used to prove the uniqueness of the solution) as
the following lemma shows.

\begin{lemma}
\label{lemma-2.1}
Under Hypotheses $\ref{hyp-1}(i)$-$(iii)$, for any $f\in C_b(\Rd)$ the Cauchy problem \eqref{dreams} admits a solution $u\in C([s,+\infty)\times\Rd)\cap C^{1,2}((s,+\infty)\times\Rd)$, which satisfies estimate
\eqref{reality}.
\end{lemma}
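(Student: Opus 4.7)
My plan is to construct a solution by approximation with Cauchy-Dirichlet problems on the balls $B_n$, pass to the limit via interior parabolic Schauder estimates, and recover continuity at $t=s$ by a purely local barrier argument. The key observation is that in \cite{AngLor10Com,KunLorLun09Non} Hypothesis \ref{hyp-1}(iv) is used only to prove uniqueness of the bounded solution (through a global Lyapunov function), so it should be removable for the existence part.

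First, for every $n\in\N$, I would fix $\vartheta_n\in C_c^\infty(B_n)$ with $\vartheta_n\equiv 1$ on $B_{n-1}$ and let $u_n$ be the classical solution of
\begin{equation*}
D_tu_n=\A u_n \text{ in } (s,+\infty)\times B_n,\q u_n(s,\cdot)=\vartheta_n f \text{ on } B_n,\q u_n=0 \text{ on } (s,+\infty)\times\partial B_n.
\end{equation*}
Such $u_n$ exists in $C([s,+\infty)\times\overline{B_n})\cap C^{1+\alpha/2,2+\alpha}_{\rm loc}((s,+\infty)\times B_n)$ by the classical theory of linear parabolic equations on bounded smooth domains, since the coefficients are uniformly elliptic and Hölder continuous on each cylinder $[s,T]\times\overline{B_n}$ and the data $\vartheta_n f$ is continuous and compatible with the boundary condition. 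Applying the parabolic maximum principle to $e^{-c_0(t-s)}u_n$ (whose equation has zero-order coefficient $c-c_0\le 0$) yields $\|u_n(t,\cdot)\|_\infty\le e^{c_0(t-s)}\|f\|_\infty$ on $\overline{B_n}$ for every $t\ge s$.

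By Hypothesis \ref{hyp-1}(i)-(ii), the interior parabolic Schauder estimates give uniform bounds
\begin{equation*}
\|u_n\|_{C^{1+\alpha/2,2+\alpha}([s+\eta,T]\times K)}\le C_{\eta,T,K}\|f\|_\infty
\end{equation*}
for every compact $K\subset\Rd$, every $0<\eta<T-s$ and all $n$ large enough that $\overline{K}\subset B_{n-1}$, with $C_{\eta,T,K}$ independent of $n$. A standard diagonal extraction then produces a subsequence converging in $C^{1,2}_{\rm loc}((s,+\infty)\times\Rd)$ to a function $u$ which solves $D_tu=\A u$ in $(s,+\infty)\times\Rd$ and satisfies \eqref{reality}.

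The delicate step, and what I expect to be the main obstacle, is proving continuity of $u$ up to $t=s$, since without Hypothesis \ref{hyp-1}(iv) no global Lyapunov function is available. I would overcome this via a local quadratic barrier. Fix $x_0\in\Rd$, $\varepsilon>0$, $T>s$ and $R>|x_0|+1$, and pick $\delta\in(0,1)$ with $|f(x)-f(x_0)|<\varepsilon$ whenever $|x-x_0|<\delta$. Since $Q$, $b$, $c$ are bounded on $[s,T]\times\overline{B_R}$ by Hypothesis \ref{hyp-1}(i), one may choose $C>0$ large enough (depending on $\|f\|_\infty,\delta,R,T$) and then $K>0$ large enough (depending on $C$ and the local sup-norms of the coefficients) so that $\psi(t,x):=\varepsilon+K(t-s)+C|x-x_0|^2$ satisfies $D_t\psi-\A\psi\ge 0$ in $(s,T)\times B_R$, dominates $|f(\cdot)-f(x_0)|$ on $\{s\}\times B_R$, and dominates $(1+e^{c_0(T-s)})\|f\|_\infty$ on $[s,T]\times\partial B_R$. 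As $\vartheta_n f=f$ on $B_R$ as soon as $n-1\ge R$, the classical parabolic maximum principle on the bounded cylinder $[s,T]\times B_R$ applied to $\pm(u_n-f(x_0))-\psi$ gives $|u_n(t,x)-f(x_0)|\le\psi(t,x)$ there, with bound independent of $n$. Letting $n\to\infty$ along the extracted subsequence, then $(t,x)\to(s,x_0)$, and finally $\varepsilon\to 0^+$, I obtain $u(t,x)\to f(x_0)$, so $u\in C([s,+\infty)\times\Rd)$ as required.
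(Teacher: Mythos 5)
Your argument is correct and reaches the same conclusion, but by a genuinely different route in the delicate step, namely the continuity of the limit up to $\{s\}\times\Rd$. The paper keeps $f$ itself as initial datum on each $B_n$, exploits monotonicity (after splitting $f=f^+-f^-$) to get pointwise convergence of the whole sequence, and then proves continuity at $t=s$ by multiplying $u_k$ by a cut-off, writing the variation-of-constants formula for the evolution operator $G_{m+1}(t,s)$ on $B_{m+1}$, and estimating the source term via Friedman's interior gradient bound $|\nabla_x u_k(t,\cdot)|\le C(t-s)^{-1/2}\|f\|_\infty$, so that the integral remainder is $O(\sqrt{t-s})$ uniformly in $k$. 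You instead cut off the \emph{initial datum} (making it compatible with the Dirichlet boundary condition, which sidesteps the corner issue), extract a $C^{1,2}_{\rm loc}$-convergent subsequence by interior Schauder estimates, and recover $u(s,\cdot)=f$ by a local quadratic barrier; this avoids both the variation-of-constants representation and the gradient estimate entirely and needs no splitting $f=f^+-f^-$. The trade-off is that the paper's argument also yields a modulus of continuity in $t-s$, while yours only gives the qualitative limit, and your construction only produces a subsequential limit rather than convergence of the whole sequence — both of which are immaterial for the statement at hand.

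One small technical point worth tightening: with $\psi(t,x)=\varepsilon+K(t-s)+C|x-x_0|^2$ one has $D_t\psi-\A\psi = K\bigl(1-c(t-s)\bigr)-2C\,{\rm Tr}\,Q-2C\langle b,x-x_0\rangle-c\varepsilon-cC|x-x_0|^2$, so the coefficient of $K$ can become nonpositive if $c(T-s)\ge 1$ somewhere on $[s,T]\times\overline{B_R}$; since $c$ need not be nonpositive, you cannot take $K$ arbitrarily large to absorb everything unless you also fix $T-s$ small enough (which is harmless here), or replace the barrier by $e^{\lambda(t-s)}(\varepsilon+C|x-x_0|^2)$ with $\lambda$ large. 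You should also record that $u_n-f(x_0)$ satisfies the inhomogeneous equation with right-hand side $c\,f(x_0)$, so the supersolution inequality must dominate $\|c\|_{L^\infty([s,T]\times\overline{B_R})}\,|f(x_0)|$ rather than merely $0$; again this only costs a larger $K$ (or $\lambda$). With these adjustments the barrier argument is sound.
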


\begin{proof}
For any $n\in\N$ and any nonnegative function $f\in C_b(\Rd)$, consider the Cauchy-Dirichlet problem
\begin{equation}
\left\{
\begin{array}{ll}
D_tu=\A u, & {\rm in}~(s,+\infty)\times B_n,\\
u=0, & {\rm on}~(s,+\infty)\times\partial B_n,\\
u(s,\cdot)=f, & {\rm in}~B_n.
\end{array}
\right.
\label{dreams-1}
\end{equation}
It is well known that, for any $n\in\N$, the previous Cauchy problem admits a unique classical solution $u_n$ and it satisfies the estimate
\begin{equation}
\|u_n(t,\cdot)\|_{C(\overline{B_n})}\le e^{c_0(t-s)}\|f\|_{\infty},\qquad\;\,t>s.
\label{abba}
\end{equation}

As it is immediately seen, the function $w_n=u_n-u_{n+1}$ satisfies the inequality
$D_tw_n-\A w_n=0$ in $(s,+\infty)\times B_n$, is nonpositive on $(s,+\infty)\times\partial B_n$ and
vanishes on $\{s\}\times B_n$. The classical maximum principle shows that
$u_n\le u_{n+1}$ in $(s,+\infty)\times B_n$. Hence, for any $(t,x)\in (s,+\infty)\times\Rd$,
the sequence $(u_n(t,x))$ converges. We can thus define a function $u:(s,+\infty)\times\Rd\to\R$
by setting $u(t,x)=\lim_{n\to +\infty}u_n(t,x)$ for any $(t,x)\in (s,+\infty)\times\Rd$. Clearly, $u$
satisfies \eqref{reality}.
On the other hand, the convergence is also in $C^{1,2}([a,b]\times K)$ for any pair of compact sets $[a,b]\subset (s,+\infty)$ and $K\subset\Rd$ as
a consequence of the classical interior Schauder estimates (see e.g., \cite{LadSolUra68Lin})\footnote{Actually, the interior Schauder estimates imply via a compactness argument
that a subsequence $(u_{n_k})$ converges in $C^{1,2}([a,b]\times K)$ and the pointwise convergence of $(u_n)$ shows that, in fact, all the sequence
$u_n$ converges in $C^{1,2}([a,b]\times K)$.}. This implies that $u$ solves the differential equation in \eqref{dreams}.

Let us prove that $u$ can be extended to $[s,+\infty)\times\Rd$ with a continuous function and $u(s,\cdot)=f$.
We use a localization argument and, to avoid cumbersome notation, we denote simply by $C$ a positive constant, which may depend on $m$ but is independent of $k$, and may vary from line to line.
 We fix $m\in\N$ and a smooth function  $\vartheta$ such that $\chi_{B_m}\le\vartheta\le\chi_{B_{m+1}}$.
If $k>m$ then the function $v_k=\vartheta u_k$ belongs to $C([s,+\infty)\times \overline{B}_{m+1})$ and solves the Cauchy-Dirichlet problem
\begin{eqnarray*}
\left\{
\begin{array}{ll}
D_t v_k=\A v_k-\psi_k,&{\rm in}~(s,+\infty)\times B_{m+1},\\[1mm]
v_k=0,       & {\rm on}~(s,+\infty)\times\partial B_{m+1},\\[1mm]
v_k(s,\cdot)=\vartheta f, &{\rm in}~B_{m+1},
\end{array}\right.
\end{eqnarray*}
where $\psi_k=2\langle Q\nabla_xu_k,\nabla\vartheta\rangle+u_k{\rm Tr}(QD_{ij}\vartheta)+u_k\langle b,\nabla\vartheta\rangle$.
The solution to the previous nonhomogeneous Cauchy problem is given by the variation-of-constants formula
\begin{eqnarray*}
v_k(t,\cdot)=G_{m+1}(t,s)f-\int_{s}^tG_{m+1}(r,s)\psi_k(r,\cdot)dr,\qquad\;\,t>s,
\end{eqnarray*}
where $G_{m+1}$ denotes the evolution operator associated with the realization in $C_b(B_{m+1})$ of the operator $\A$ with homogenous Dirichlet boundary conditions.

Since $v_k=u_k$ in $(s,+\infty)\times B_m$, if $x$ belongs to $B_m$ then it holds that
\begin{equation}
|u_k(t,x)-f(x)|\le |(G_{m+1}(t,s)f)(x)-f(x)|+\int_{s}^t|(G_{m+1}(r,s)\psi_k(r,\cdot))(x)|dr,
\label{sfioravo}
\end{equation}
which implies that
\begin{align*}
|u(t,x)-f(x)|\le &|(G_{m+1}(t,s)f)(x)-f(x)|\\
&+\limsup_{k\to +\infty}\int_{s}^t|(G_{m+1}(r,s)\psi_k(r,\cdot))(x)|dr.
\end{align*}
Clearly, $(G_{m+1}(t,s)f)(x)$ converges to $f(x)$ as $t\to s^+$. On the other hand, the integral term vanishes as $t\to s^+$,
uniformly with respect to $k$. Indeed,
using \eqref{abba} we can straightforwardly estimate
\begin{equation}
\label{seizo-int-3}
|\psi_k(t,x)|\leq C(e^{c_0(t-s)}\|f\|_\infty+\|\nabla_xu_k(t,\cdot)\|_{L^\infty(B_{m+1})}),\qquad\;\,t>s,\;\,x\in B_{m+1}.
\end{equation}
Moreover, the estimates in \cite[Theorem 3.5]{friedman} and \eqref{ariosto} show that
$|\nabla_xu_k(t,x)|\le C(t-s)^{-1/2}\|f\|_{\infty}$
for any $t\in (s,s+1]$ and $x\in B_{m+1}$.
Combining this estimate and \eqref{seizo-int-3} we deduce that
$|\psi_k(t,x)|\leq
C(t-s)^{-1/2}\|f\|_\infty$ for any $(t,x)\in (s,s+1]\times B_{m+1}$,
and $k>m$. Since $\|G_{m+1}(t,s)\|_{L(C_b(\Rd))}\le e^{c_0(t-s)}$ for any $t>s$ and $m\in\N$, it thus follows that
$|(G_{m+1}(r,s)\psi_k(r,\cdot))(x)|\le C(r-s)^{-1/2}$ for any $(r,x)\in (s,s+1]\times B_{m+1}$, and it is now clear that the integral term in
the right-hand side of \eqref{sfioravo} vanishes as $t\to s^+$, uniformly with respect to $k$.

By the arbitrariness of $m$ we have so proved the assertion of the theorem for nonnegative functions $f\in C_b(\Rd)$.

For a general $f\in C_b(\Rd)$ we split $f=f^+-f^-$ and observe that the solution to problem \eqref{dreams-1} is the sum of the solutions $u_{n,+}$ and $u_{n,-}$ of this problem
corresponding to $f^+$ and $f^-$ respectively. Since the sequences $(u_{n,+})$ and $(u_{n,-})$ converge to the solutions to problem \eqref{dreams} with $f$ replaced respectively by $f^+$
and $f^-$, $u_n$ converges pointwise to a solution $u$ to problem \eqref{dreams} which belongs to $C([s,+\infty)\times\Rd)\cap C^{1,2}((s,+\infty)\times\Rd)$ and satisfies
 estimate \eqref{reality}. This completes the proof.
\end{proof}

\begin{remark}
\label{rem-2.1}
Some remarks are in order.
\begin{enumerate}[\rm (i)]
\item
If $f\ge 0$, then the solution to problem \eqref{dreams} is the minimal among all the solutions which belong to $C([s,+\infty)\times\Rd)\cap C^{1,2}((s,+\infty)\times\Rd)$ and
are bounded in each strip $[s,T]\times\Rd$. Indeed, if $w$ is any other solution, then, for any $n\in\N$, the function $z=w-u_n$  (where $u_n$ is as in the proof of the previous theorem)
solves the equation $D_tz=\A z$, $z(0,\cdot)\equiv 0$ and $z$ is nonnegative on $(s,+\infty)\times\partial B_n$. The maximum principle in \cite{krylov} and \cite[Theorem A.2]{ForMetPri04Gra} implies that
$z\ge 0$, i.e., $u_n\le w$ in $(s,+\infty)\times B_n$. Letting $n\to +\infty$ we conclude that $u\le w$ in $(s,+\infty)\times\Rd$.
\item
Hypothesis \ref{hyp-1}(iii) can not be avoided. Indeed, let us consider the one dimensional autonomous operator $\mathcal{A}=D_{xx}+c$ and assume that $c(x)$ diverges to $+\infty$ as $x\to +\infty$. Fix $n\in\N$, let
$M_n>0$ be such that $c(x)>n$ for any $x\in (M_n,+\infty)$ and suppose that $u\in C^{1,2}((0,+\infty)\times\R)\cap C([0,+\infty)\times\R)$ solves the equation $D_tu=\A u$ in $(0,+\infty)\times\R$, $u(0,\cdot)=1$ in $\R$ and $u(t,\cdot)$ is bounded in $\R$ for any $t>0$. Then, $D_tu\ge D_{xx}u+nu$ in $(0,+\infty)\times (M_n,+\infty)$. A comparison argument (see \cite{krylov} and \cite[Theorem A.2]{ForMetPri04Gra}) shows that $u(t,x)\ge e^{nt}v(t,x-M_n)$ for any $(t,x)\in (0,+\infty)\times (M_n,+\infty)$, where $v$ is the unique bounded classical solution to the Cauchy-Dirichlet problem
\begin{equation*}\left\{\begin{array}{lll}
D_tv(t,x)=D_{xx}v(t,x), \quad\, &t\in (0,+\infty), &x\in (0+\infty),\\[1mm]
v(t,0)=0,\quad\; &t\in (0,+\infty),\\[1mm]
v(0,x)=1,\quad\; &&x\in (0,+\infty).
\end{array}
\right.
\end{equation*}
It thus follows $\|u(t,\cdot)\|_{\infty}\ge e^{nt}v(t,1)$ for any $n\in\N$ and $t\in (0,+\infty)$. Since
$v(t,1)>0$ for any $t>0$, letting $n$ tend to $+\infty$ in the last inequality we get to a contradiction.
\item
Hypothesis \ref{hyp-1}(iv) is used to prove a variant of the classical maximum principle (see \cite{AngLor10Com,KunLorLun09Non}). Without such an assumption, the Cauchy problem \eqref{dreams}
may admit more than a unique solution $u\in C^{1,2}((s,+\infty)\times\Rd)\cap C([s,+\infty)\times\Rd)$ which is bounded in $[s,T]\times\Rd$ for any $T>s$. This was known since the middle of the last century in the one-dimensional case. Indeed, Feller provided
in \cite{feller} a complete characterization of the operators $\A=qD_{xx}+bD_x$ for which the elliptic equation $\lambda u-\A u=f\in C_b(\R)$ admits/does not admit for $\lambda>0$ a unique solution
$u\in C_b(\R)\cap C^2(\R)$. The characterization is given in term of integrability properties at infinity of the functions $Q$ and $R$ defined by
\begin{eqnarray*}
Q(x)=\frac{1}{q(x)W(x)}\int_0^xW(s)ds,\qquad\;\, R(x)=W(x)\int_0^x\frac{1}{q(s)W(s)}ds
\end{eqnarray*}
for any $x\in\R$, where $W$ is, up to a multiplicative constant, the wronskian determinant associated to the ordinary differential operator $qD_{xx}+bD_x$ i.e.,
\begin{eqnarray*}
W(x)=\exp\bigg (-\int_0^x\frac{b(s)}{q(s)}ds\bigg ),\qquad\;\,x\in\R.
\end{eqnarray*}
It turns out that the above elliptic equation admits a unique bounded solution $u\in C^2(\R)$ for any $f\in C_b(\R)$ if and only if
$R$ is not integrable either in a neighborhood of $-\infty$ and in a neighborhood of $-\infty$.
On the other hand, if $R$ is integrable both in a neighborhood of $+\infty$ and in a neighborhood of $-\infty$, then all the solutions
of the equation $\lambda u-qu''-bu'=f\in C_b(\R)$ are bounded. \\
Based on this remark, consider the operators $\A_+=D_{xx}+x^3D_x$ and $\A_-=D_{xx}-x^3D_x$.
In the first case,
\begin{eqnarray*}
Q_+(x)=e^{x^4/4}\int_0^xe^{-s^4/4}ds,\qquad\;\,R_+(x)=e^{-x^4/4}\int_0^xe^{s^4/4}ds
\end{eqnarray*}
for any $x\in\R$. The function $R_+$ belongs to $L^1((-\infty,0))\cap L^1((0,+\infty))$ and consequently, for any $f\in C_b(\Rd)$, the equation
$u-\A_+u=f\in C_b(\R)$ admits infinitely many bounded solutions $v\in C^2(\R)$. From any of such solution we obtain a solution $u$ of the parabolic equation $D_tu-\A u=0$ which belongs to $C^{1,2}([s,+\infty)\times\Rd)$
and is bounded in any strip $[s,T]\times\Rd$, simply by considering the function $u$ defined by $u(t,x)=e^{t-s}v(x)$ for any $(t,x)\in [s,+\infty)\times\Rd$.
Hypothesis \ref{hyp-1}(iv) is not satisfied by operator $\A_+$.\\
On the other hand, if we consider the operator $\A_-$ then the function $\varphi:\R\to\R$ defined by $\varphi(x)=1+x^2$ for any $x\in\R$ satisfies Hypothesis
\ref{hyp-1}(iv) and the Cauchy problem \eqref{dreams} is uniquely solvable for any $f\in C_b(\R)$.
\end{enumerate}
\end{remark}

In the rest of this paper we will always assume that Hypotheses \ref{hyp-1} hold true.
In view of Lemma \ref{lemma-2.1} and Remark \ref{rem-2.1}(iii), we can associate a family of bounded operators in $C_b(\Rd)$ to the operator $\A$: for any $f\in C_b(\Rd)$ and $I\ni s<t$,
$G(t,s)f$ is the value at $t$ of the unique classical solution to problem \eqref{dreams}. Estimate \eqref{reality} guarantees that each operator $G(t,s)$ is bounded
in $C_b(\Rd)$ and, again the variant of the classical maximum principle yields the evolution law
$G(t,s)=G(t,r)G(r,s)$ for any $I\ni s<r<t$.

As it has been proved in \cite{AngLor10Com}, a Green kernel can be associated with the evolution operator $G(t,s)$, i.e., there exists a function $g:\{(t,s)\in I\times I: t>s\}\times\Rd\times\Rd\to (0,+\infty)$
such that
\begin{equation}
(G(t,s)f)(x)=\int_{\Rd}f(y)g(t,s,x,y)dy,\qquad\;\,t>s\in I,\;\,x,y\in\Rd,\;\,f\in C_b(\Rd).
\label{kernel}
\end{equation}
For any fixed $s$, $t$ and $x$, the function $g(t,s,x,\cdot)$ belongs to $L^1(\Rd)$ and its $L^1$-norm is bounded from above by
$e^{c_0(t-s)}$. In particular, if $c\equiv 0$, then $g(t,s,x,y)dy$ is a probability measure.
From \eqref{kernel} it follows immediately that
\begin{equation}
[G(t,s)(fg)](x)\le [(G(t,s)|f|^p)(x)]^{\frac{1}{p}}[(G(t,s)|g|^q(x)]^{\frac{1}{q}},\qquad\;\,t>s\in I,\;\,x\in\Rd,
\label{fg}
\end{equation}
for any $f,g\in C_b(\Rd)$ and $p,q\in (1,+\infty)$ such that $1/p+1/q=1$. Moreover,
\begin{equation}
(G(t,s)f)(x)\le [(G(t,s)|f|^p)(x)]^{\frac{1}{p}},\qquad\;\,t>s\in I,\;\,x\in\Rd,\;\,f\in C_b(\Rd),
\label{fp}
\end{equation}
for any $p\in (1,+\infty)$, if $c_0\le 0$. For estimates for the Green function $g$, we refer the reader to
\cite{KLR,KLR-1}.

\section{Uniform estimates for the spatial derivatives of $G(t,s)f$ and consequences}
One powerful tool used to prove estimates for the derivatives of solutions to Cauchy problems (mainly in the whole space) is
the well celebrated Bernstein method (see \cite{bernstein}) which goes back to 1906, and the reiteration theorem (see \cite{triebel}).
The Bernstein method, used in the case of bounded coefficients, works well also in the case of unbounded coefficients, provided suitable both
algebraic and growth conditions on the coefficients of the operator $\A$ are prescribed. More precisely,
assume that
\medskip

\begin{hyps}
\begin{enumerate}
\item[\rm (i)]
the coefficients $q_{ij},b_j$ $(i,j=1,\ldots,d)$ and $c$ belong to
$C^{\alpha/2,k+\alpha}_{\rm loc}(I\times\Rd)$;
\item[\rm (ii)]
there exist two locally bounded functions $C_1, C_2:I\to\R$ such that
\begin{align*}
&|Q(t,x)x|+{\rm Tr}(Q(t,x))\le C_1(t)(1+|x|^2)\nu(t,x),\\
&\langle b(t,x),x\rangle \le C_2(t)(1+|x|^2)\nu(t,x),
\end{align*}
for any $t\in I$ and $x\in\Rd$;
\item[\rm (iii)]
there exist a locally bounded function $C:I\to\R$ and functions $r_0,r,\varrho:I\times\Rd\to\R$, with $\inf_{[a,b]\times\Rd}\varrho>0$ for any $[a,b]\subset I$ such that
$\langle ({\rm Jac}_xb)\xi,\xi\rangle\le r_0|\xi|^2$, $|D^{\beta}_xq_{ij}|\le C\nu$, $|D^{\delta}_x b_j|\le r$, $|D^{\eta}_xc|\le\varrho$
in $I\times\Rd$ for any $\{0,2\}\neq|\beta|\le k$, $1<|\delta|\le k$, $0\le |\eta|\le k$, $i,j=1,\ldots,d$ and $\xi\in\Rd$;
\item[\rm (iv)]
there exist locally bounded positive functions $L$ and $M$ such that
$r_0+L_kr+L\varrho^2\le M\nu$ in $I\times\Rd$,
where $L_1=0$, $L_2=d^{3/2}/\sqrt{8}$, $L_3=2/\sqrt{5}$ if $d=1$ and $L_3=\sqrt{d^3(d+1)/3}$ otherwise;
\item[\rm (v)]
if $k\ge 2$ then there exists a locally bounded function $K:I\to\R$ such that
\begin{align*}
\sijhk D_{hk}q_{ij}a_{ij}a_{hk}\le K\nu \shk a_{hk}^2
\end{align*}
in $I\times\Rd$, for any symmetric matrix $A=(a_{hk})$ and any $(t,x)\in
I\times\Rd$.
\end{enumerate}
\end{hyps}

Then, the Bernstein method allows to prove the following result.
\setcounter{theorem}{1}

\begin{theorem}[Theorem 2.4 of \cite{Lorenzi-1}]
\label{C0-C3}
Let Hypotheses $3.1(k)$ be satisfied. Then, for any $I\ni s<t$ and $f\in C_b(\Rd)$, the function
$G(t,s)f$ belongs to $C^k_b(\Rd)$. Moreover, for any $h,m\in\N$, with $h\le m\le k$ it holds that
\begin{equation}
\|G(t,s)f\|_{C^m_b(\R^d)}\le C_{h,m}(t-s)^{-\frac{m-h}{2}}\|f\|_{C^h_b(\R^d)},
\qquad\;\,t\in (s,T],
\label{stimasem}
\end{equation}
for any $f\in C_b^h(\R^d)$ and a positive constant $C_{h,m}$, independent of $s$, $T$ and $f$.
\end{theorem}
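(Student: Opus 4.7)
I would follow the classical Bernstein method, carried out on the approximating Cauchy--Dirichlet solutions $u_n$ built in Lemma \ref{lemma-2.1} (starting from a smooth compactly supported sequence $f_n$ approximating $f$ in $C^h_b(\Rd)$). Standard interior parabolic regularity gives enough smoothness of $u_n$ in the interior of $B_n$ to perform the computations. The goal is to establish the derivative estimate on $B_n$ with constants independent of $n$ and then pass to the limit, using the $C^{1,2}_{\rm loc}$ convergence of $u_n$ to $G(\cdot,s)f$ together with bootstrapping the interior Schauder estimates up to order $k$.

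\textbf{Main construction.} Fix $s<T$ and $h\le m\le k$. Introduce the auxiliary function
\begin{equation*}
v(t,x)=\sum_{j=h}^{m}\gamma_j(t-s)^{j-h}|D^j u_n(t,x)|^2,
\end{equation*}
where the positive constants $\gamma_h,\ldots,\gamma_m$ are to be chosen. The time weights $(t-s)^{j-h}$ are designed so that $v(s,\cdot)\le\gamma_h\|f\|_{C^h_b(\Rd)}^2$ and so that the factor $(t-s)^{-(m-h)/2}$ in \eqref{stimasem} comes out automatically once $|D^m u_n|^2$ is extracted from $v$. Differentiating the equation $\alpha$ times in space gives $D_t(D^\alpha u)=\A(D^\alpha u)+[\A,D^\alpha]u$, and the Bochner-type identity
\begin{equation*}
(D_t-\A)|D^\alpha u|^2 = -2\langle Q\nabla D^\alpha u,\nabla D^\alpha u\rangle+2D^\alpha u\cdot[\A,D^\alpha]u
\end{equation*}
is the starting point to estimate $(D_t-\A)v$. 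The leading quadratic term $-2\nu_0|D^{j+1}u|^2$ coming from the $j$th summand has to be used to absorb (i) the positive contribution $j\gamma_j(t-s)^{j-h-1}|D^j u|^2$ generated by the time weight in the same summand and (ii) the commutator terms produced inside the $(j+1)$th summand. Choosing $\gamma_h,\ldots,\gamma_m$ iteratively and using Young's inequality together with Hypothesis 3.1(iii)--(v), one arrives at an inequality of the form $(D_t-\A)v\le M(T)v$ on $[s,T]\times B_n$.

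\textbf{Conclusion.} The variant of the classical maximum principle allowed by Hypothesis \ref{hyp-1}(iv) (applied in $B_n$, with the boundary values of $v$ on $\partial B_n$ controlled via the local smoothness of $u_n$ and a standard localization through a cut-off of $\varphi$) then yields
\begin{equation*}
v(t,x)\le e^{M(T)(t-s)}\gamma_h\|f_n\|_{C^h_b(\Rd)}^2,\qquad (t,x)\in[s,T]\times B_n.
\end{equation*}
Isolating the $j=m$ term and taking $n\to+\infty$ produces \eqref{stimasem}. The $h$-dependence on the right-hand side is governed by the initial value of $v$, while the $(t-s)^{-(m-h)/2}$ singularity is exactly the weight extracted when rewriting $\gamma_m(t-s)^{m-h}|D^m u|^2\le v$.

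\textbf{Main obstacle.} The delicate step is verifying that the commutators $[\A,D^\alpha]u$, which contain products of derivatives of $q_{ij}, b_j, c$ against derivatives of $u$ of various orders, can be controlled by the good quadratic term $-2\nu_0|D^{|\alpha|+1}u|^2$ after Cauchy--Schwarz absorption, in such a way that the remainder is at most linear in $v$ itself. This is where the specific numerical values of $L_1,L_2,L_3$ in Hypothesis 3.1(iv) enter: they quantify the worst Cauchy--Schwarz constant in each order $k$. Hypothesis 3.1(v) plays an analogous but separate role when $k\ge 2$, because the second commutator generates the symmetric quadratic form $\sum D_{hk}q_{ij}\, D_{hk}u\, D_{ij}u$, which is not a sum of squares and must be dominated pointwise by $K\nu|D^2u|^2$. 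Once those algebraic inequalities are in place, the rest of the argument is bookkeeping.
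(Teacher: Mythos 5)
The paper does not prove this theorem itself; it quotes it from \cite{Lorenzi-1} and notes only that it follows from the Bernstein method, which is exactly the route you take (time-weighted sum of squared derivatives, absorption of commutators by the good term $-2\langle Q\nabla D^{\alpha}u,\nabla D^{\alpha}u\rangle$ using Hypotheses 3.1(iii)--(v), maximum principle, passage to the limit in $n$). Your sketch is faithful to that argument, including the one genuinely delicate point you flag (the role of $L_k$ and of Hypothesis 3.1(v)) and the need to insert cut-off functions so that the Bernstein function is controllable on $\partial B_n$, so this is essentially the same proof.
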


The reiteration theorem allows to extend the validity of estimate \eqref{stimasem} to the case when $h$ and $m$ are not integers. Finally, the evolution law
allows to extend \eqref{stimasem} to any $t>s$ up to adding an exponential factor
$e^{\omega_{h,m}(t-s)}$ in its right-hand side, for some nonnegative constant $\omega_{h,m}$, i.e., one can prove that
\begin{align*}
\|G(t,s)f\|_{C^m_b(\Rd)}\le C_{h,m}e^{\omega_{h,m}(t-s)}(t-s)^{-\frac{m-h}{2}}\|f\|_{C^h_b(\Rd)},\qquad\;\,s<t,
\end{align*}
for any $m\in (0,k)$, $f\in C^h_b(\Rd)$\footnote{Clearly, this method is too rough to provide us with the best constant $\omega_{h,m}$. Different arguments are used to improve
the asymptotic behaviour of the derivatives of the function $G(t,s)f$, as we will see in Section \ref{sect-5}.}. For further details, we refer the reader to \cite{Lorenzi-1}.

Using the above uniform estimates one can prove the following optimal Schauder estimates for the solution
to the Cauchy problem \eqref{nonom}.

\begin{theorem}[Theorem 2.7 of \cite{Lorenzi-1}]
Let Hypotheses $3.1(3)$ be
satisfied. Fix $\theta\in (0,1)$, $s\in I$ $g\in
C([s,T]\times\Rd)$, such that $\sup_{t\in [s,T]}\|g(t,\cdot)\|_{C^{\theta}_b(\Rd)}<+\infty$, and $f\in C^{2+\theta}_b(\Rd)$.
Then, problem \eqref{nonom} admits a unique solution $u\in C_b([s,T]\times\Rd)\cap C^{1,2}((s,T)\times\Rd)$.
Moreover, $u(t,\cdot)\in C^{2+\theta}_b(\Rd)$ for any
$t\in [s,T]$ and there exists a positive constant $C_0$ such that
\begin{align*}
\sup_{t\in [s,T]}\|u(t,\cdot)\|_{C_b^{2+\theta}(\Rd)}\le C_0\bigg (
\|f\|_{C^{2+\theta}_b(\Rd)}+\sup_{t\in [s,T]}\|g(t,\cdot)\|_{C^{\theta}_b(\Rd)}\bigg ).
\end{align*}
\end{theorem}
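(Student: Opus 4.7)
\medskip
\noindent\textbf{Proof plan.}
The plan is to construct the solution via the variation-of-constants formula
\begin{equation*}
u(t,\cdot)=G(t,s)f+\int_s^tG(t,r)g(r,\cdot)\,dr,\qquad t\in[s,T],
\end{equation*}
and then to control the $C^{2+\theta}_b(\Rd)$ norm of each piece via the smoothing estimates of Theorem \ref{C0-C3} together with the reiteration theorem. Uniqueness in the class $C_b([s,T]\times\Rd)\cap C^{1,2}((s,T)\times\Rd)$ is automatic: the difference of two solutions solves a homogeneous Cauchy problem with zero initial datum, and Hypothesis \ref{hyp-1}(iv) yields the variant of the maximum principle that rules out non-trivial bounded solutions.

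For the homogeneous piece $u_1(t,\cdot):=G(t,s)f$, the reiterated version of estimate \eqref{stimasem} (extended to non-integer indices as mentioned just after Theorem \ref{C0-C3}) yields $\|u_1(t,\cdot)\|_{C^{2+\theta}_b(\Rd)}\le C\|f\|_{C^{2+\theta}_b(\Rd)}$ uniformly in $t\in[s,T]$, up to an exponential factor that is bounded on the compact time interval. The same reiterated version of \eqref{stimasem}, now with $k=3$ and source regularity $h=\theta$, also provides the four scale estimates
\begin{equation*}
\|D^j_xG(t,r)g(r,\cdot)\|_\infty\le C(t-r)^{-\frac{j-\theta}{2}}\|g(r,\cdot)\|_{C^\theta_b(\Rd)},\qquad j=0,1,2,3,
\end{equation*}
for $s\le r<t\le T$; here is precisely where Hypotheses $3.1(3)$, i.e.\ the availability of $C^3_b$-estimates on $G(t,s)$, becomes essential.

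For the nonhomogeneous piece $u_2(t,\cdot):=\int_s^tG(t,r)g(r,\cdot)\,dr$, the cases $j=0,1,2$ produce integrable singularities in $r$, so a dominated convergence argument legitimates differentiation under the integral sign and yields, after taking sup-norms, uniform control of $u_2$, $\nabla_xu_2$, and $D^2_xu_2$ by $\sup_{r\in[s,T]}\|g(r,\cdot)\|_{C^\theta_b(\Rd)}$. The delicate step is the $C^\theta_b(\Rd)$ seminorm of $D^2_xu_2(t,\cdot)$, since the $j=2$ bound now develops a non-integrable singularity $(t-r)^{-1}$: for $x\neq y$, set $\eta=|x-y|$ and split the time integral at $t-\eta^2$. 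On $[t-\eta^2,t]$ one uses the $j=2$ estimate in $L^\infty$ together with the triangle inequality; on $[s,t-\eta^2]$ one invokes the mean value theorem with the $j=3$ estimate. A direct computation shows that both contributions are bounded by $C\eta^\theta\sup_{r\in[s,T]}\|g(r,\cdot)\|_{C^\theta_b(\Rd)}$, which is exactly what is required.

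Putting everything together yields the asserted Schauder estimate; the fact that $u\in C^{1,2}((s,T)\times\Rd)$ solves the equation follows from interior parabolic Schauder theory applied to the representation formula, together with the recovery of $D_tu$ from the equation $D_tu=\A u+g$. The main obstacle is precisely the splitting-of-integral argument for the H\"older seminorm of $D^2_xu_2$: this is the classical Schauder trick, and it forces us to carry derivatives of $G(t,s)$ one order higher than the target regularity, thereby explaining the appearance of the assumption Hypotheses $3.1(3)$ even though the final estimate involves only up to second order spatial derivatives.
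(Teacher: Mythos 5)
The paper itself gives no proof of this statement: it is quoted verbatim from \cite{Lorenzi-1} (Theorem 2.7 there), preceded only by the remark that it follows from the uniform estimates of Theorem \ref{C0-C3}. Your proposal is exactly the classical optimal-regularity scheme on which that reference is based: the variation-of-constants representation, the reiterated smoothing estimates $\|D^j_xG(t,r)h\|_\infty\le C(t-r)^{-(j-\theta)/2}\|h\|_{C^\theta_b(\Rd)}$ for $j=0,\dots,3$ (this is where Hypotheses $3.1(3)$ enter), uniqueness via the maximum principle of Hypothesis \ref{hyp-1}(iv), and the splitting of the time integral at $t-|x-y|^2$ to handle the $\theta$-seminorm of $D^2_xu_2$, using the $j=2$ bound near $r=t$ and the mean value theorem with the $j=3$ bound away from it. The exponent bookkeeping you indicate is correct, so the core of the argument is sound and matches the intended route.

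Two points deserve tightening. First, the splitting argument also needs the complementary case $|x-y|^2\ge t-s$, where one does not split and simply integrates the $j=2$ bound over $[s,t]$ to get $C(t-s)^{\theta/2}\le C|x-y|^{\theta}$; this is trivial but should be said. Second, and more substantively, your closing sentence --- that $u$ solves the equation ``by interior parabolic Schauder theory together with the recovery of $D_tu$ from the equation'' --- is circular as written: one cannot read $D_tu$ off the equation before knowing that $u$ solves it. The standard repairs are either to differentiate the representation formula in $t$, using $D_tG(t,r)h=\A(t)G(t,r)h$ and the fact that the resulting singularity $(t-r)^{-1+\theta/2}$ is integrable \emph{locally in $x$} (local bounds are needed because the coefficients of $\A$ are unbounded), or to construct the solution in balls $B_n$ with the Dirichlet evolution operators, pass to the limit via interior Schauder estimates, and invoke uniqueness. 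One should also justify that $r\mapsto G(t,r)g(r,\cdot)$ is measurable with an integrable majorant so that $u_2$ is well defined. These are standard technical completions rather than conceptual gaps.
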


\begin{proposition}
\label{prop-3.4}
Under Hypotheses $3.1(k)$, if $f\in C^k_b(\Rd)$, then all the spatial derivatives of $G(\cdot,s)f$ up to the order $k$ are continuous in
$[s,+\infty)\times\Rd$.
\end{proposition}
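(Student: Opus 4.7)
My plan is to combine the uniform $C^k_b$-bound of Theorem 3.2 with the continuity at $t=s$ of $u:=G(\cdot,s)f$ established in Lemma 2.1, together with classical interior parabolic Schauder estimates.

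For the open strip $(s,+\infty)\times\Rd$ I would invoke interior parabolic regularity: $u\in C^{1,2}((s,+\infty)\times\Rd)$ is a bounded classical solution of $D_tu=\A u$ and, by Hypothesis 3.1(k)(i), the coefficients of $\A$ belong to $C^{\alpha/2,k+\alpha}_{\mathrm{loc}}(I\times\Rd)$. The standard interior parabolic Schauder estimates for equations with locally H\"older-continuous coefficients then give $u\in C^{(k+2+\alpha)/2,k+2+\alpha}_{\mathrm{loc}}((s,+\infty)\times\Rd)$; in particular, $D^\alpha_xu$ is jointly continuous on $(s,+\infty)\times\Rd$ for every $|\alpha|\le k$.

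For the continuity at $t=s$ of $D^\alpha_xu$ with $|\alpha|\le k-1$ I would use an Arzel\`a--Ascoli argument. By estimate \eqref{stimasem} with $h=m=k$, the family $\{u(t,\cdot):t\in(s,s+T]\}$ is bounded in $C^k_b(\Rd)$, so the functions $\{D^\alpha_xu(t,\cdot):t\in(s,s+T]\}$ are equi-Lipschitz on $\Rd$ whenever $|\alpha|\le k-1$. Combined with the locally uniform convergence $u(t,\cdot)\to f$ as $t\to s^+$ coming from Lemma 2.1, Arzel\`a--Ascoli yields $D^\alpha_xu(t,\cdot)\to D^\alpha f$ locally uniformly: any subsequential locally uniform limit has distributional $\alpha$-derivative equal to $D^\alpha f$ by integration by parts, so it coincides with the continuous function $D^\alpha f$. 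This gives joint continuity at every point $(s,\bar x)$ for $|\alpha|\le k-1$.

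The main obstacle is the top order $|\alpha|=k$, for which the uniform $C^k_b$-bound no longer implies equicontinuity of the $k$-th derivatives. My plan is to realize $v^\alpha:=D^\alpha_xu$ as the unique bounded classical solution of the Cauchy problem
\begin{equation*}
D_tv^\alpha-\A v^\alpha=[D^\alpha,\A]u\quad\text{in }(s,+\infty)\times\Rd,\qquad v^\alpha(s,\cdot)=D^\alpha f,
\end{equation*}
and then to represent it via the variation-of-constants formula
\begin{equation*}
v^\alpha(t,\cdot)=G(t,s)(D^\alpha f)+\int_s^tG(t,\tau)\bigl([D^\alpha,\A]u(\tau,\cdot)\bigr)d\tau.
\end{equation*}
The first summand is continuous on $[s,+\infty)\times\Rd$ by Lemma 2.1 applied to $D^\alpha f\in C_b(\Rd)$. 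The hard part is controlling the integral term near $\tau=s$: the commutator $[D^\alpha,\A]u$ involves $D^{k+1}_xu$, which is not globally bounded under Hypotheses 3.1(k). I would circumvent this by approximation: pick a sequence $f_n\in C^\infty_b(\Rd)$ converging to $f$ in $C^k_{\mathrm{loc}}(\Rd)$ with $\|f_n\|_{C^k_b}\le C\|f\|_{C^k_b}$ (for instance, mollifications), prove the result for each $f_n$ (where the extra spatial regularity renders $[D^\alpha,\A]u_n$ locally integrable near $\tau=s$ and the Arzel\`a--Ascoli argument applies also to the top order), and finally pass to the limit on compact subsets of $\Rd$, using interior parabolic regularity together with the uniform $C^k_b$-bound of Theorem 3.2 to transfer the local convergence of the initial data into the local convergence of the derivatives $D^\alpha_xG(t,s)f_n\to D^\alpha_xG(t,s)f$.
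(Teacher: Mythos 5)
Your reduction to the initial time $t=s$ is the right one, and the two easy parts are fine: interior Schauder regularity handles joint continuity on the open strip $(s,+\infty)\times\Rd$, and the Arzel\`a--Ascoli argument based on the uniform bound \eqref{stimasem} with $h=m=k$ correctly settles the derivatives of order at most $k-1$ at $t=s$. The genuine gap is in the top-order step, which is where the whole content of the proposition lies. First, the full-space variation-of-constants formula you write down for $v^\alpha=D^\alpha_xu$ is not available: the source $[D^\alpha,\A]u$ contains $D^{k+1}_xu$ as well as products of derivatives of the (unbounded) coefficients with derivatives of $u$, so it is not a bounded continuous function on $\Rd$ and $G(t,\tau)$ cannot be applied to it; moreover no existence/uniqueness result for the nonhomogeneous problem with such data is at your disposal. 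Second, the approximation does not repair this. Under Hypotheses $3.1(k)$ Theorem \ref{C0-C3} controls only $k$ derivatives, so even for $f_n\in C^{\infty}_b(\Rd)$ you have no uniform bound on $\|u_n(t,\cdot)\|_{C^{k+1}_b(\Rd)}$ near $t=s$; hence neither the claimed local integrability of $[D^\alpha,\A]u_n$ near $\tau=s$ nor an Arzel\`a--Ascoli argument at top order is justified for the approximants, i.e.\ the result ``for each $f_n$'' is not actually proved. Finally, even granting it, passing to the limit requires the convergence $D^k_xG(t,s)f_n\to D^k_xG(t,s)f$ on compacta to be \emph{uniform in $t$ near $s$}: interior regularity degenerates as $t\to s^+$, and \eqref{stimasem} would give such uniformity only if $f_n\to f$ in $C^k_b(\Rd)$, which mollification does not provide for a general $f\in C^k_b(\Rd)$ (its top derivatives need not be uniformly continuous). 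So the interchange of the limits $n\to+\infty$ and $t\to s^+$ is not justified.

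The missing idea is localization, which is how the paper proceeds. Fix $m$ and a cut-off $\vartheta$ with $\chi_{B_m}\le\vartheta\le\chi_{B_{m+1}}$; then $\vartheta u$ solves a Cauchy--Dirichlet problem on $B_{m+1}$ with initial datum $\vartheta f$ and source $\psi=-u(\A-c)\vartheta-2\langle Q\nabla_xu,\nabla\vartheta\rangle$, so that
\begin{equation*}
u(t,\cdot)=G_{m+1}(t,s)(\vartheta f)+\int_s^tG_{m+1}(t,r)\psi(r,\cdot)\,dr\qquad\text{in }B_m.
\end{equation*}
The point is that $\psi$ involves only $u$ and $\nabla_xu$ multiplied by derivatives of $\vartheta$ and by the (now locally bounded) coefficients, so $\|\psi(r,\cdot)\|_{C^{k-1}(\overline{B_{m+1}})}\le C\|f\|_{C^k_b(\Rd)}$ uniformly in $r\in(s,s+1)$ by \eqref{stimasem}; combined with the smoothing estimate $\|G_{m+1}(t,r)\|_{L(C^{k-1},C^{k})}\le C(t-r)^{-1/2}$ for the Dirichlet evolution operator on the ball, the Duhamel term is $O(\sqrt{t-s})$ in $C^k(\overline{B_{m}})$, while $G_{m+1}(t,s)(\vartheta f)$ is handled by classical theory on a bounded domain. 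This gives continuity at $t=s$ of all derivatives up to order $k$ simultaneously, with no need to treat the top order separately.
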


\begin{proof}
Since the arguments used are independent of $k$, to fix the ideas we consider the case $k=3$. Clearly, we have just to prove the continuity on $\{s\}\times\Rd$ of
the spatial derivatives up to the third-order of the function $G(\cdot,s)$, since their continuity
in $(s,+\infty)\times\Rd$ is a classical result (see e.g., \cite{friedman}).

The proof is based on a localization argument as in the proof of Lemma \ref{lemma-2.1}. We fix $x_0\in \R^d$ and $m\in\N$ such that $x_0\in B_m$ and consider
a smooth cut-off function $\vartheta$ supported in $B_{m+1}$ and identically equal to $1$ in $B_m$.
Arguing as in the proof of Lemma \ref{lemma-2.1} we immediately see that
\begin{eqnarray*}
u(t,\cdot)=G_{m+1}(t,s)(\vartheta f)+\int_s^tG_{m+1}(t,r)\psi(r,\cdot)dr=u_1(t,\cdot)+u_2(t,\cdot)
\end{eqnarray*}
in $B_m$, for any $t\in (s,+\infty)$,
where $\psi=-G(\cdot,s)f(\A-c)\vartheta-2\langle Q\nabla_xG(\cdot,s)f,\nabla\vartheta\rangle$.

By classical results, the function $u_1$ and its spatial derivatives up to the third order
are continuous in $[s,+\infty)\times B_{m+1}$. As far as $u_2$ is concerned, we observe that
there exists a positive constant $C$, depending also on $s$, such that
\begin{eqnarray*}
\|G_{m+1}(t,s)\psi\|_{C^3(\overline{B_{m+1}})}\le \frac{C}{\sqrt{t-s}}\|\psi\|_{C^2(\overline{B_{m+1}})},\qquad\;\,t\in (s,s+1),
\end{eqnarray*}
for any $\psi\in C^2(\overline{B_{m+1}})$. Since $\|\psi(r,\cdot)\|_{C^2(\overline{B_{m+1}})}\le \widetilde C\|G(t,\cdot)f\|_{C^3(\overline{B_{m+1}})}$, using
Theorem \ref{C0-C3}, we immediately deduce that
$\|\psi(r,\cdot)\|_{C^2(\overline{B_{m+1}})}\le \overline C\|f\|_{C^3_b(\Rd)}$ for any $r\in (s,s+1)$, where $\widetilde C$ and $\overline C$ are positive
constants independent of $r$. Thus, we conclude that
\begin{align*}
\|u_2(t,\cdot)\|_{C^3(\overline{B_{m+1}})}\le&\int_s^t\|G_{m+1}(t,r)\psi(r,\cdot)\|_{C^3(\overline{B_{m+1}})}dr\\
\le& C\overline C\|f\|_{C^k_b(\Rd)}\int_s^t(t-r)^{-\frac{1}{2}}dr\\
= &2C\overline C\|f\|_{C^3_b(\Rd)}\sqrt{t-s}
\end{align*}
for any $t\in (s,s+1)$. Hence, letting $t\to s^+$ we conclude that $u(t,\cdot)$ and its spatial derivatives up to the third order vanish uniformly in $B_m$
as $t\to s^+$. By the arbitrariness of $m$ the claim follows.
\end{proof}

\section{Pointwise estimates for the derivatives of $G(t,s)f$}
\label{sect-4}

The pointwise gradient estimates for $G(t,s)f$ plays  an important role in the
study of many properties of the evolution operator, as we have already stressed in the Introduction. All these properties will be investigated
in Section \ref{sect-5}.
Here, we prove some pointwise estimates for the
derivatives (up to the third order) of $G(t,s)f$.

Throughout this section, we assume the following set of assumptions.
\begin{hyps0}
Hypotheses $3.1(k)$ are satisfied with the following differences:
\begin{itemize}
\item
$|D^{\beta}_xq_{ij}|\le C\nu^{\gamma}$
in $I\times\Rd$ for any $i,j=1,\ldots,d$, some positive constant $C$ and some $\gamma\in (0,1)$;
\item
$r_0+L_kr+L\rho^2\le M\nu^{\gamma}$ in $I\times\Rd$ for any $k=1,2,3$ and some constants $L>0$ and $M\in\R$, where
the constant $L_k$ is defined in Hypothesis $3.1(k)(iv)$;
\item
Hypothesis $3.1(k)(v)$ is satisfied with $K\nu$ being replaced by $K\nu^{\gamma}$, $K$ being a real constant.
\end{itemize}

\end{hyps0}

The scheme of this section is the following: first we prove estimate \eqref{aaaa} (with $h=k$) for any $k=1,2,3$ and $p \in(1,+\infty)$.
Next, strengthening the assumptions on the coefficients of the operator $\A$ we prove \eqref{aa}.
Note that if this estimate holds true, then, taking as $f=\mathds{1}$,
we conclude that $\nabla_xG(t,s)\mathds{1}$ identically vanishes in $\Rd$, that is $G(t,s)\mathds{1}=\psi(t)$ for any $t>s$ and some function
$\psi\in C([s,+\infty))\cap C^1((s,+\infty))$, which solves the equation
$\psi'(t)=c(t,x)\psi(t)$ and satisfies the condition $\psi(s)=1$. Since $G(\cdot,s)\mathds{1}$ is positive in $(s,+\infty)\times\Rd$, it follows that
$\psi(t)$ is positive for any $t>s$. We thus conclude that $c(t,x)=\psi'(t)/\psi(t)$ for any $(t,x)\in (s,+\infty)\times\Rd$, i.e., $c$ is independent of $x$.
Hence, if $u$ solves the Cauchy problem \eqref{dreams}, then the function
$w:[s,+\infty)\times\Rd\to\R$, defined by
\begin{eqnarray*}
w(t,x)=\exp\bigg (-\int_s^tc(r)dr\bigg )u(t,x),\qquad\;\,(t,x)\in [s,+\infty)\times\Rd,
\end{eqnarray*}
has the same degree of smoothness of the function $u$ and solves the Cauchy problem \eqref{dreams} with
$\A$ being replaced by the operator $\A_0={\rm Tr}(QD^2)+\langle b,\nabla\rangle$.
For this reason in the proof of Theorem \ref{teo-sharzan} we confine ourselves to the case when
$c \equiv 0$.

Next, we deal with the case $p=1$ in \eqref{aa}. As it has been explained in the Introduction, to get such an estimate we require that the diffusion coefficients do not depend on the space variable.
Finally, we prove estimate \eqref{aaaa} with $h=k-1$ and $k=1,2,3$ showing that
$\Gamma_{p,k-1,k}^{(2)}(r)\sim c_{p,k}r^{-p/2}$ as $r\to 0^+$, for some positive constant $c_{p,k}$.
As a byproduct, estimate \eqref{aaaa} follows in its full generality. In particular,
$\Gamma_{p,h,k}^{(2)}(r)\sim c_{p,k}'r^{-(k-h)p/2}$ as $r\to 0^+$, for some positive constant
$c_{p,k}$.
All these estimates have been proved in \cite{BerLor05} in the autonomous case when $c \equiv 0$.

To prove the above estimates in the general case we need a preliminary result.

\begin{lemma}
\label{lem-bonaventura}
Let the sequence $(c_n)\subset C^{\alpha/2,\alpha}_{\rm loc}(I\times\Rd)\cap C_b(I\times\Rd)$  converges to $c$ locally uniformly in $(s,+\infty)\times\Rd$ as $n$ tends to $+\infty$ and $c_n(t,x)\le M$ for any $n\in\N$, $(t,x)\in (s,+\infty)\times\Rd$ and some constant $M$.
For any $n\in\N$, $s\in I$ and $f\in C_b(\Rd)$, let $u_n$ solve the Cauchy problem
\begin{equation}
\left\{
\begin{array}{ll}
D_tu={\rm Tr}(QD^2_xu)+\langle b,\nabla_xu\rangle+c_nu, & {\rm in}~(s,+\infty)\times\Rd,\\[1mm]
u(s,\cdot)=f, & {\rm in}~\Rd.
\end{array}
\right.
\label{dreams-n}
\end{equation}
Further, denote by $u\in C_b([s,+\infty)\times\Rd)\cap C^{1+\alpha/2,2+\alpha}((s,+\infty)\times\Rd)$ the
solution to the Cauchy problem \eqref{dreams}, provided by Lemma $\ref{lemma-2.1}$.
Then $u_n$ converges to $u$ in $C^{1,2}([a,b]\times K)$ for any $[a,b]\subset (s,+\infty)$ and any compact set $K\subset\Rd$.

Finally, if Hypothesis $3.1(1)(i)$ is satisfied, $c_n\in C^{\alpha/2,1+\alpha}_{\rm loc}(I\times\Rd)$
and $\nabla_xc_n$ converges to $\nabla c$
locally uniformly in $I\times\Rd$, then $D^3_{ijh}u_n$ converges to $D^3_{ijh}u$ locally uniformly in $(s,+\infty)\times\Rd$ for any $i,j,h=1,\ldots,d$.
\end{lemma}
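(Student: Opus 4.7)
The approach is a compactness-plus-uniqueness strategy. I will first derive uniform interior parabolic estimates on $(u_n)$, extract convergent subsequences via Ascoli--Arzel\`a, identify each subsequential limit with $u$ using the uniqueness granted by Hypothesis \ref{hyp-1}(iv), and conclude that the whole sequence converges.

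Since $c_n \leq M$ for all $n$, applying the construction in Lemma \ref{lemma-2.1} to the operator obtained by substituting $c_n$ for $c$ gives the uniform bound $\|u_n(t,\cdot)\|_\infty \leq e^{M(t-s)}\|f\|_\infty$. Rewriting \eqref{dreams-n} as
\begin{equation*}
D_tu_n - \mathrm{Tr}(QD_x^2u_n) - \langle b,\nabla_xu_n\rangle = c_nu_n,
\end{equation*}
the differential operator on the left has \emph{fixed} H\"older coefficients on compact subsets by Hypothesis \ref{hyp-1}(i), while the right-hand side is uniformly bounded in $L^\infty$ on compact subsets of $(s,+\infty)\times\Rd$. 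Interior parabolic $L^p$-estimates therefore yield uniform $W^{1,2}_p$-bounds on $u_n$ on compact subcylinders; combined with Sobolev embedding and a short bootstrap that exploits the H\"older regularity of the \emph{limit} coefficient $c$ (splitting $c_nu_n = cu_n + (c_n-c)u_n$ and absorbing the second term as a small $L^\infty$ error), this upgrades to uniform $C^{1+\alpha'/2,2+\alpha'}$-bounds on $u_n$ on compact subsets of $(s,+\infty)\times\Rd$, for some $\alpha'\in(0,\alpha)$.

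By Ascoli--Arzel\`a, every subsequence of $(u_n)$ admits a further subsequence $(u_{n_k})$ that converges in $C^{1,2}_{\mathrm{loc}}((s,+\infty)\times\Rd)$ to some $v$. Passing to the limit in the PDE for $u_{n_k}$ and using the local uniform convergence $c_{n_k}\to c$, we obtain $D_tv=\A v$ in $(s,+\infty)\times\Rd$. To match the initial datum I repeat the localization argument from the proof of Lemma \ref{lemma-2.1}: on $B_m$, using a smooth cut-off $\vartheta$ with $\chi_{B_m}\leq\vartheta\leq\chi_{B_{m+1}}$ and the Dirichlet evolution operator on $B_{m+1}$ associated to $\A$ with $c_n$ in place of $c$, the variation-of-constants formula combined with the interior gradient estimate $\|\nabla_xu_n(t,\cdot)\|_{L^\infty(B_{m+1})} \leq C(t-s)^{-1/2}\|f\|_\infty$ (with $C$ independent of $n$, since the dependence on $c_n$ enters only through the uniform bound $M$) shows that $u_n(t,\cdot)\to f$ uniformly on $B_m$ as $t\to s^+$, uniformly in $n$. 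Hence $v$ extends continuously to $[s,+\infty)\times\Rd$ with $v(s,\cdot)=f$ and is bounded on strips; the variant of the maximum principle granted by Hypothesis \ref{hyp-1}(iv) then forces $v=u$. Since every subsequence of $(u_n)$ admits a further $C^{1,2}$-convergent subsequence with limit $u$, the entire sequence converges.

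For the third-derivative statement, Hypothesis $3.1(1)$ ensures that $\nabla_xQ$ and $\nabla_xb$ are H\"older on compacts, so the PDE for $u_n$ may be differentiated in $x_i$: the function $v_n^{(i)}:=D_iu_n$ satisfies a parabolic equation whose right-hand side contains $\mathrm{Tr}(D_iQ\,D_x^2u_n)+\langle D_ib,\nabla_xu_n\rangle+(D_ic_n)u_n$. The $C^{1,2}$-convergence of $u_n$ just established, together with the assumed local uniform convergence $\nabla_xc_n\to\nabla_xc$, makes this right-hand side uniformly bounded on compacts and locally uniformly convergent. Repeating the Schauder--compactness--uniqueness argument for $v_n^{(i)}$ (the only possible limit being $D_iu$, already known from the first part) yields $D^3_{ijh}u_n\to D^3_{ijh}u$ locally uniformly. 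The main obstacle throughout is the absence of uniform H\"older regularity on the $c_n$; this is handled by exploiting the H\"older regularity of the \emph{limit} $c$ via the splitting $c_nu_n = cu_n + (c_n-c)u_n$, which makes it possible to apply Schauder estimates up to an arbitrarily small $L^\infty$ error and is, in my view, the delicate point in the argument.
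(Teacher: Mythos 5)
Your overall architecture coincides with the paper's: uniform sup-norm bounds from Lemma \ref{lemma-2.1}, interior regularity estimates to extract a subsequence converging in $C^{1,2}$ on compact sets, identification of every subsequential limit as a solution of \eqref{dreams} (with the initial datum recovered by the same localization/variation-of-constants argument used in the proof of Lemma \ref{lemma-2.1}), uniqueness via Hypothesis \ref{hyp-1}(iv), and the subsequence principle; for the third derivatives both you and the paper in effect differentiate the equation, the paper citing the proof of \cite[Theorem 3.10]{friedman} to get uniform $C^{\alpha/2,\alpha}$ bounds on $D^3_xu_{n}$ on compact sets.

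The step that does not work as written is the upgrade to uniform $C^{1+\alpha'/2,2+\alpha'}$ bounds via the splitting $c_nu_n=cu_n+(c_n-c)u_n$. Schauder estimates require H\"older-norm control of the source term; a term that is merely small in $L^\infty$ cannot be ``absorbed'' into a Schauder estimate, and the local $C^{\alpha/2,\alpha}$-seminorms of $(c_n-c)u_n$ are not controlled by the hypotheses of the lemma. What your splitting actually delivers is uniform interior $W^{2,1}_p$ bounds and hence, by parabolic Sobolev embedding, uniform $C^{(1+\beta)/2,1+\beta}$ bounds: enough for local uniform convergence of $u_n$ and $\nabla_xu_n$, but not of $D^2_xu_n$ and $D_tu_n$, which is what $C^{1,2}$ convergence requires. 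The same obstruction recurs for $v_n^{(i)}=D_iu_n$ in the second part. The paper instead applies the interior Schauder estimate of \cite[Theorem 4.10.1]{LadSolUra68Lin} directly to the equation with $c_n$ as a zeroth-order coefficient; this tacitly uses uniform local $C^{\alpha/2,\alpha}$ bounds on the $c_n$, which are available where the lemma is actually invoked (there $c_n=\psi_n\vartheta_n c$, so $c_n\equiv c$ on any fixed compact subset of $(s,+\infty)\times\Rd$ for $n$ large). If you add such a uniform local H\"older bound on $(c_n)$ as a hypothesis, your argument closes and the splitting becomes unnecessary; without it, the bootstrap to second-order (and third-order) convergence is a genuine gap.
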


\begin{proof}
The proof of the first part follows the same lines as in the proof of Lemma \ref{lemma-2.1}; hence, we just sketch it.
By Lemma \ref{lemma-2.1}, the Cauchy problem \eqref{dreams-n} admits, for any $n\in\N$ a solution $u_n$ which satisfies the estimate
\begin{equation}
\|u_n(t,\cdot)\|_{\infty}\le e^{M(t-s)}\|f\|_{\infty},\qquad\;\,t>s,
\label{ariosto}
\end{equation}
where $M$ is as in the statement. The interior Schauder estimate in \cite[Theorem 4.10.1]{LadSolUra68Lin} and a diagonal argument imply that there exists a subsequence $(u_{n_k})$ which, as $k\to +\infty$, converges in $C^{1,2}([a,b]\times K)$ to a function $u\in C^{1+\alpha/2,2+\alpha}_{\rm loc}((s,+\infty)\times\Rd)$ for
any $[a,b]\subset (s,+\infty)$ and any compact set $K\subset\Rd$, and $u$ solves the equation
$D_tu=\A u$ in $(s,+\infty)\times\Rd$.

The same arguments used in the proof of Lemma \ref{lemma-2.1} and applied to the function $u_{n_k}$ show
that
\begin{eqnarray*}
u_{n_k}(t,\cdot)=G_{m+1}(t,s)f-\int_{s}^tG_{m+1}(r,s)\overline{\psi}_{n_k}(r,\cdot)dr,\qquad\;\,t>s,
\end{eqnarray*}
in $B_m$, where $G_{m+1}(t,s)$ denotes the evolution operator associated with the realization in $C_b(B_{m+1})$ of the operator $\A$ with homogenous Dirichlet boundary conditions,
$\overline{\psi}_{n_k}=\psi_{n_k}+\vartheta(c-c_{n_k})u_{n_k}$, $\psi_{n_k}$ being as in the proof of Lemma \ref{lemma-2.1}. Since $|G_{m+1}(r,s)\overline{\psi}_{n_k}(r,\cdot)|\leq
C(r-s)^{-1/2}\|f\|_\infty$ in $B_{m+1}$ for any $r\in (s,s+1)$ and some positive constant $C$, independent of $k$, as in the proof of Lemma \ref{lemma-2.1} we conclude that $u$ can be extended by continuity in $[s,+\infty)\times\Rd$ by setting
$u(s,\cdot)=f$.

To complete the proof, we assume that the coefficients of the operator $\A$ are once continuously differentiable with respect to the spatial variables
in $I\times\Rd$ with derivatives which belong to $C^{\alpha/2,\alpha}_{\rm loc}(I\times\Rd)$. Then, by the proof of \cite[Theorem 3.10]{friedman}, it follows that
there exists a positive constant $C$, independent of $k$ such that
$\|D^3_xu_{n_k}\|_{C^{\alpha/2,\alpha}([a,b]\times K)}\le C$ for any $[a,b]\subset (s,+\infty)$ and any compact set $K\subset\Rd$.
Hence, up to a subsequence, all the third-order derivatives of $u_{n_k}$ converge uniformly in $[a,b]\times K$, and
clearly they converge to the corresponding third-order spatial derivative of $u$.
Since $[a,b]$ and $K$ have been arbitrarily fixed, the proof is complete.
\end{proof}

\begin{theorem}\label{cucu}
Let Hypotheses $4.1(k)$ be satisfied. Then, estimate \eqref{aaaa} holds true, with $h=k$ and
$\Gamma_{p,k,k}(r)=e^{\sigma_{k,p}r}$ for any $r>0$
where
\begin{equation}
\sigma_{k,p}r=\bigg [p\sup_{I\times \Rd}[(1-p)\nu+c_k(p)\nu^{\gamma}]+c_0(p-1)+pc_{d,k}\bigg ]^+
\label{sigma-k-p}
\end{equation}
if $p\in (1, 2]$, $c_k(p)$ and $c_{d,k}$ being positive constants explicitly determined $($see the proof\hskip 3pt$)$ and
$\sigma_{k,p}=p\sigma_{k,2}/2$ if $p>2$.
\end{theorem}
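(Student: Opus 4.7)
The plan is to adapt the Bernstein method of \cite{BerLor05} (used there in the autonomous setting) to the present nonautonomous framework: I will produce a pointwise differential inequality for a convenient power of the \emph{energy} $\sum_{j=0}^{k}|D^j_xu|^2$, where $u=G(\cdot,s)f$, and then transfer it into the pointwise bound \eqref{aaaa} via the variant of the classical maximum principle available under Hypothesis~\ref{hyp-1}(iv) (see Remark~\ref{rem-2.1}(iii)). The smoothness needed to perform the pointwise computations is supplied by Theorem~\ref{C0-C3} and Proposition~\ref{prop-3.4}, while Lemma~\ref{lem-bonaventura} will intervene only in the final approximation step.

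I first focus on $p\in(1,2]$ and a regular datum $f\in C^{k+2+\alpha}_b(\Rd)$, which, by Proposition~\ref{prop-3.4} and standard interior Schauder estimates, ensures that $u$ has bounded continuous spatial derivatives up to order $k+2$ in $(s,+\infty)\times\Rd$ and up to order $k$ in $[s,+\infty)\times\Rd$. Set
\[
\eta:=\sum_{j=0}^{k}|D^j_xu|^2,\qquad v_\delta:=(\eta+\delta)^{p/2},\quad\delta>0,
\]
the parameter $\delta$ compensating for the lack of $C^2$--regularity of $\xi\mapsto\xi^{p/2}$ at $\xi=0$ when $p<2$. Differentiating the equation $D_tu=\A u$ up to $k$ times in space, multiplying the $j$-th resulting equation by $D^j_xu$ and summing produces, after routine Bernstein--type manipulations, an expression for $D_tv_\delta-\A v_\delta$ that splits into three contributions: (a)~a coercive term $-p(\eta+\delta)^{p/2-1}\sum_j\mathrm{Tr}(Q(D^{j+1}_xu)^\top D^{j+1}_xu)\le -p\nu(\eta+\delta)^{p/2-1}\sum_{j=0}^{k}|D^{j+1}_xu|^2$; (b)~a quadratic gradient term $-\tfrac{p(p-2)}{4}(\eta+\delta)^{p/2-2}\langle Q\nabla_x\eta,\nabla_x\eta\rangle$ which, for $p\le 2$, has the \emph{wrong} sign but, via the Cauchy--Schwarz bound $|\nabla_x\eta|^2\le 4\eta\sum_j|D^{j+1}_xu|^2$, is absorbed into~(a) at the cost of the constant $c_k(p)$; and (c)~an error term $p(\eta+\delta)^{p/2-1}\mathcal R(t,x)$ bundling together the ${\rm Jac}_xb$ contribution, the products of $D^\beta_xq_{ij},\,D^\delta_xb_j,\,D^\eta_xc$ with derivatives of $u$, and, for $k\ge 2$, the trace contraction $\sum D_{hk}q_{ij}D_{ij}u\,D_{hk}u$ controlled by Hypothesis~$4.1(k)(v)$. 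Combining (a)--(c) with the coefficient bounds in Hypotheses~$4.1(k)$ (whose specific constants $L_k,\,c_k(p),\,c_{d,k}$ are tailored exactly to this absorption) yields
\[
D_tv_\delta-\A v_\delta\le \sigma_{k,p}v_\delta\qquad\text{in }(s,+\infty)\times\Rd.
\]
Applying the maximum--principle variant to the bounded function $w_\delta:=v_\delta-e^{\sigma_{k,p}(\cdot-s)}G(\cdot,s)v_\delta(s,\cdot)$, which vanishes on $\{s\}\times\Rd$ and satisfies the same inequality, gives $w_\delta\le 0$. Letting $\delta\to 0^+$ and using $|D^k_xu|^{p}\le\eta^{p/2}$ on the left-hand side yields \eqref{aaaa} with $h=k$ for $f\in C^{k+2+\alpha}_b(\Rd)$; the general case $f\in C^k_b(\Rd)$ follows by mollification in $f$ together with the interior-Schauder compactness already exploited in Lemma~\ref{lemma-2.1}.

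For $p>2$ I start from the $p=2$ estimate. The rescaling $u\mapsto e^{-c_0(t-s)}u$ reduces the analysis to $c_0\le 0$ (this only shifts $\sigma_{k,p}$ by the $c_0(p-1)$ term appearing in \eqref{sigma-k-p}), and then \eqref{fp} applied with exponent $p/2\ge 1$ gives $(G(t,s)\phi)^{p/2}\le G(t,s)\phi^{p/2}$ for $\phi\ge 0$. Hence
\[
|D^k_xu|^{p}=(|D^k_xu|^2)^{p/2}\le e^{\frac{p}{2}\sigma_{k,2}(t-s)}\Bigl(G(t,s)\sum_{j=0}^{k}|D^jf|^2\Bigr)^{p/2}\le e^{\sigma_{k,p}(t-s)}G(t,s)\Bigl(\sum_{j=0}^{k}|D^jf|^2\Bigr)^{p/2},
\]
with $\sigma_{k,p}=\tfrac{p}{2}\sigma_{k,2}$, which is the claim.

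The main obstacle will be the algebraic work hidden in step (c): for $k=2$ and, even more so, $k=3$ one has to dominate sign-indefinite cross-terms (such as $\sum D_{hk}q_{ij}D_{ij}u\,D_{hk}u$ and the lower-order products coming from $D^\delta_xb_j$ and $D^\eta_xc$) by the coercive contribution (a), while losing only the weaker factor $\nu^\gamma$ (rather than $\nu$) on the derivatives of the coefficients. The explicit constants $L_k,\,c_k(p),\,c_{d,k}$ in \eqref{sigma-k-p} arise precisely from the sharp Cauchy--Schwarz/Young inequalities used in this absorption, which is exactly why Hypotheses~$4.1(k)$ are formulated in the present form.
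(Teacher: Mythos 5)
Your overall strategy coincides with the paper's: Bernstein's method applied to $w_\delta=(\sum_{j=0}^k|D^j_xu|^2+\delta)^{p/2}$, a differential inequality, the maximum-principle variant, $\delta\to0^+$, and Jensen's inequality for $p>2$. There is, however, a genuine gap in the claimed inequality $D_tv_\delta-\A v_\delta\le\sigma_{k,p}v_\delta$ when $p\in(1,2)$ and the potential $c$ is unbounded (which Hypotheses $4.1(k)$ allow: only $\sup c<+\infty$ and $|c|\le\varrho$ with $\varrho^2\lesssim\nu^\gamma$ are imposed). The computation of $D_tv_\delta-\A v_\delta$ produces, besides $(p-1)cv_\delta$, the extra term $-pc\,\delta\,(\eta+\delta)^{\frac{p}{2}-1}$ coming from the regularization parameter. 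If $c$ is unbounded from below, this term is bounded only by $p\,c^-\,\delta^{p/2}$, which for each fixed $\delta>0$ is an unbounded function of $x$ and cannot be absorbed into $\sigma_{k,p}v_\delta$; so the pointwise inequality you feed into the maximum principle is false as stated. The paper circumvents this by first proving Step 1 under the additional assumption that $c$ is bounded, keeping the additive remainder $p\|c\|_\infty\tau^{p/2}$ and comparing with the modified function $z_\tau=e^{-\sigma_{k,p}(t-s)}(w_\tau-p\|c\|_\infty\tau^{p/2}(t-s))$, and then, in Step 2, approximating a general $c$ by truncations $c_n=\psi_n\vartheta_n c$ and invoking Lemma \ref{lem-bonaventura} to pass to the limit in the resulting estimates (checking that the constants $\sigma_{k,p,n}$ converge to $\sigma_{k,p}$). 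You cite Lemma \ref{lem-bonaventura} but assign it the wrong role (approximation of $f$ rather than of $c$); your argument needs this second approximation layer to be complete.

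Two smaller remarks. First, the extra regularity $f\in C^{k+2+\alpha}_b(\Rd)$ and the subsequent mollification in $f$ are unnecessary: Proposition \ref{prop-3.4} already gives continuity up to $t=s$ of the derivatives of $G(\cdot,s)f$ up to order $k$ for $f\in C^k_b(\Rd)$, and interior Schauder theory supplies the regularity needed for $t>s$; this is exactly how the paper justifies that $w_\tau\in C_b([s,+\infty)\times\Rd)\cap C^{1,2}_{\rm loc}((s,+\infty)\times\Rd)$. Second, your observation that Jensen's inequality $(G(t,s)h)^{p/2}\le G(t,s)h^{p/2}$ requires $c_0\le0$, handled by the rescaling $e^{-c_0(t-s)}u$, is a point the paper glosses over, and is worth keeping, provided you track how the rescaling affects $\sigma_{k,2}$ more carefully than the parenthetical remark suggests.
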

\begin{proof}
We split the proof into three steps. In the first one we prove the estimate when $p \in(1,2]$ and $c$ is bounded. In the second step, using Lemma
\ref{lem-bonaventura} we remove the assumption on the boundedness of $c$. Finally in the last one we obtain the claim also in the case $p>2$.
To simplify the notation, throughout the proof, we set
\begin{align*}
\begin{array}{ll}
{\mathscr Q}_0(\zeta)=\langle Q\nabla_x\zeta,\nabla_x\zeta\rangle,\q &{\mathscr Q}_1(\zeta)=\ds\sijh D_hq_{ij}D_h\zeta D_{ij}\zeta,\\[2mm]
{\mathscr Q}_2(\zeta)=\ds\sijhk D_{hk}q_{ij}D_{ij}\zeta D_{hk}\zeta,\q &{\mathscr B}_1(\zeta)=\langle ({\rm Jac}_xb)\nabla_x\zeta,\nabla_x\zeta\rangle\\[2mm]
{\mathscr B}_2(\zeta)=\ds\sijh D_{jh}b_iD_i\zeta D_{jh}\zeta, & {\mathscr C}_1(\zeta)=\zeta\langle\nabla_xc,\nabla\zeta\rangle\\[5mm]
{\mathscr C}_2(\zeta)=\zeta{\rm Tr}(D^2_xcD^2_x\zeta)
\end{array}
\end{align*}
for any smooth enough function $\zeta:\Rd\to\R$.

{\em Step 1.} Let $p \in (1,2]$ and assume that $c$ is bounded.
We first consider the case $j=3$. For simplicity, we set $u=G(\cdot, s)f$ and, for any $\tau>0$, we introduce the function $w_\tau=(\sum_{k=0}^3|D^k_x u|^2+\tau)^{p/2}$, which is positive and belongs to
$C_b([s, +\infty)\times \Rd)\cap C^{1,2}_{\rm{loc}}((s, +\infty)\times \Rd)$, by virtue of Proposition \ref{prop-3.4} and Theorem \ref{C0-C3}.
Moreover, it solves the differential equation $D_t w_\tau-\A w_\tau= \psi_{\tau}$ in $(s,+\infty)\times \Rd$ where
\begin{align*}
\psi_\tau=&pw_\tau^{1-\frac{2}{p}}\sum_{i=1}^4{\mathscr J}_i(u)+p(2-p)w_{\tau}^{1-\frac{4}{p}}\langle Q\xi_u,\xi_u\rangle+(p-1)cw_\tau-pc\tau w_{\tau}^{1-\frac{2}{p}},
\end{align*}
and
\begin{align*}
{\mathscr J}_1(u)=&-{\mathscr Q}_0(u)-\si{\mathscr Q}_0(D_iu)-\sij {\mathscr Q}_0(D_{ij}u)-\sijh {\mathscr Q}_0(D_{ijh}u)\\
&+{\mathscr B}_1(u)+2\si {\mathscr B}_1(D_iu)+3\sij {\mathscr B}_1(D_{ij}u),\\[2mm]
{\mathscr J}_2(u)=&{\mathscr Q}_1(u)+2\si {\mathscr Q}_1(D_iu)+3\sij {\mathscr Q}_1(D_{ij}u)+{\mathscr Q}_2(u)\\
&+3\si {\mathscr Q}_2(D_iu)+\sijhkl D_{hkl}q_{ij}D_{ij}uD_{hkl}u,\\[2mm]
{\mathscr J}_3(u)=&{\mathscr B}_2(u)+3\si {\mathscr B}_2(D_iu)+\sijhk D_{jhk}b_iD_iuD_{jhk}u,\\[2mm]
{\mathscr J}_4(u)=&{\mathscr C}_1(u)+2\si {\mathscr C}_1(D_iu)+3\sij {\mathscr C}_1(D_{ij}u)+{\mathscr C}_2(u)\\
&+3\si {\mathscr C}_2(D_iu)\!+\!u\sijh D_{ijh}cD_{ijh}u,
\end{align*}
\begin{eqnarray*}
\xi_u=u\nabla_x u+\si D_iu\nabla_xD_iu+\sij D_{ij}u\nabla_xD_{ij}u+\sijh D_{ijh}u\nabla_xD_{ijh}u.
\end{eqnarray*}
Here and below, all the equalities and inequalities that we write are meant in $(s,+\infty)\times\Rd$.

Let $\alpha$ and $\beta$, with $|\alpha|, |\beta|\le 3$, be fixed.
The Cauchy-Schwarz inequality applied twice, yields
\begin{align}
&\sij q_{ij} \sum_{|\alpha|=h}D^\alpha_x u D_iD^\alpha_x u
\sum_{|\beta|=k}D^\beta_x u D_jD^\beta_x u\notag\\
\le &\sum_{|\alpha|=h}|D^\alpha_x u|({\mathscr Q}_0(D^{\alpha}_xu))^{\frac{1}{2}}
 \sum_{|\beta|=k} |D^\beta_x u|({\mathscr Q}_0(D^{\beta}_xu))^{\frac{1}{2}}
\notag\\
\le &\ds |D^h_xu||D^k_xu|\bigg (\sum_{|\alpha|=h}{\mathscr Q}_0(D^{\alpha}_xu)\bigg )^{\frac{1}{2}}
\bigg (\sum_{|\beta|=k}{\mathscr Q}_0(D^{\beta}_xu)\bigg )^{\frac{1}{2}}.
\label{cauchy-derivate}
\end{align}
In view of \eqref{cauchy-derivate} we get
\begin{align*}
\langle Q\xi_u,\xi_u\rangle
\le &\bigg [\ds |u|({\mathscr Q}_0(u))^{\frac{1}{2}}+|D_xu|\bigg (\si{\mathscr Q}_0(D_iu)\bigg )^{\frac{1}{2}}
+|D^2_xu|\bigg (\sij {\mathscr Q}_0(D_{ij}u)\bigg )^{\frac{1}{2}}\notag\\
&\;\,+|D^3_xu|^2\bigg (\sijh {\mathscr Q}_0(D_{ijh}u)\bigg )^{\frac{1}{2}}\bigg ]^2\notag\\
\le & w^{\frac{2}{p}}_{\tau}\bigg ({\mathscr Q}_0(u)+\si{\mathscr Q}_0(D_iu)+\sij {\mathscr Q}_0(D_{ij}u)
+\sijh {\mathscr Q}_0(D_{ijh}u)\bigg ).
\end{align*}
Hence, taking Hypotheses \ref{hyp-1}(ii) and 3.1(3)(iii) into account, we can estimate the ``good'' terms in the definition of $\psi_\tau$ as follows:
\begin{align}
{\mathscr J}_1(u)\le \sum_{k=1}^3[(1-p)\nu+kr_0]|D^k_xu|^2+(1-p)\nu|D^4_xu|^2.
\label{J-1}
\end{align}

The other terms in the definition of the function $\psi_{\tau}$ are estimated using Hypotheses 3.1(3)(iii), 3.1(3)(v) (where, now, $C$ and $K$ are constants)
and the Cauchy-Schwarz inequality. We get
\begin{align}
&{\mathscr Q}_1(\zeta)\le C\nu^{\gamma} \sij |D_h\zeta||D_{ij}\zeta|\le Cd^{\frac{3}{2}}\nu|\nabla_x\zeta||D^2_x\zeta|\notag\\
&\phantom{|{\mathscr Q}_1(\zeta)|}\le\frac{Cd^2}{4\varepsilon}\nu^{\gamma}|\nabla_x\zeta|^2+Cd\varepsilon\nu^{\gamma}|D^2_x\zeta|^2,
\label{estim-Q1}
\\[2mm]
&{\mathscr Q}_2(\zeta)\le K\nu^{\gamma}|D^2\zeta|^2,
\label{estim-Q2}
\\[1mm]
&\sijhkl D_{hkl}q_{ij}D_{ij}\zeta D_{hkl}\zeta\le \frac{Cd^3}{4\varepsilon}\nu^{\gamma}|D^2\zeta|^2+Cd^2\varepsilon\nu^{\gamma}|D^3\zeta|^2\notag
\end{align}
for any smooth enough function $\zeta:\Rd\to\R$ and $\varepsilon>0$, which shows that
\begin{align*}
{\mathscr J}_2(u)\le & \frac{Cd^2}{4\ve}\nu^\gamma |D_xu|^2+\bigg (C\ve d+\frac{Cd^2}{2\ve}+\frac{Cd^3}{4\ve}+K\bigg )\nu^\gamma |D_x^2u|^2\notag\\
&+\bigg ( 2C\ve d+\frac{3Cd^2}{4\ve}+
\ve Cd^2+3K\bigg )\nu^\gamma |D^3_xu|^2\notag+3C\ve d\nu^\gamma|D^4_xu|^2.
\end{align*}
Similarly,
\begin{align}
&{\mathscr B}_2(\zeta)\le\frac{d^2}{4\varepsilon_1}r|\nabla_x\zeta|^2+d\varepsilon_1r|D^2_x\zeta|^2,
\label{estim-B2}\\[1mm]
&\bigg |\sijhk D_{jhk}b_iD_iuD_{jhk}\zeta\bigg |\le \frac{d^3}{4\varepsilon_1}r|\nabla\zeta|^2+d\varepsilon_1 r|D^3\zeta|^2\notag
\end{align}
for any $\zeta$ as above and any $\varepsilon_1>0$. Hence,
\begin{equation*}
{\mathscr J}_3(u)\le
\frac{d^2}{4\ve_1}(d+1)r|\nabla_xu|^2+dr\bigg (\ve_1 +\frac{3d}{4\ve_1}\bigg )|D^2_xu|^2+4d\ve_1r|D^3_xu|^2
\end{equation*}
for any $\varepsilon_1>0$.
Further,
\begin{align*}
&{\mathscr C}_1(\zeta)\le \frac{1}{4\varepsilon_2}\zeta^2+d\varepsilon_2\rho^2|\nabla_x\zeta|^2,\qquad\;\,{\mathscr C}_2(\zeta)\le \frac{d}{4\varepsilon_2}\zeta^2+d\varepsilon_2\rho^2|D^2\zeta|^2\\
& \zeta\sijh D_{ijh}cD_{ijh}\zeta\le \frac{d^3}{4\varepsilon_2}\zeta^2+\varepsilon_2\rho^2|D^3\zeta|^2
\end{align*}
for any smooth enough function $\zeta:\Rd\to\R$ and $\varepsilon_2>0$.
It thus follows that
\begin{align*}
{\mathscr J}_4(u)\le &\frac{d^3+d+1}{4\varepsilon_2}u^2+\bigg (d\varepsilon_2\rho^2+
\frac{1}{2\varepsilon_2}+\frac{3d}{4\varepsilon_2}\bigg )|\nabla_xu|^2
+\bigg (3d\varepsilon_2\rho^2+\frac{3}{4\varepsilon_2}\bigg )|D^2_xu|^2\notag\\
&+(6d+1)\varepsilon_2\rho^2|D^3_xu|^2\notag\\
\le &\frac{3d^3}{4\varepsilon_2}u^2+\bigg (d\varepsilon_2\rho^2+
\frac{5d}{4\varepsilon_2}\bigg )|\nabla_xu|^2
+\bigg (3d\varepsilon_2\rho^2+\frac{3}{4\varepsilon_2}\bigg ) |D^2_xu|^2\notag\\
&+7d\varepsilon_2\rho^2|D^3_xu|^2.
\end{align*}

Finally, $-c\tau w^{1-2/p}\le \|c\|_{\infty}\tau^{p/2}$.

Summing up, from all the previous estimates it follows that we can make
nonnegative the coefficient in front of $|D^4_xu|^2$ by taking
$\ve=(p-1)\nu_0^{1-\gamma}/(3Cd)$. With this choice of $\ve$, we get
\begin{align*}
\psi_\tau\le &p\bigg\{\bigg [(1-p)\nu+\frac{3C^2d^3}{4(p-1)}\nu_0^{\gamma-1}\nu^\gamma
+r_0+r\frac{d^3+d^2}{4\ve_1}+\bigg (d\varepsilon_2\rho^2+
\frac{5d}{4\varepsilon_2}\bigg )\bigg ]|\nabla_xu|^2\notag\\
&\phantom{\Bigg\{}+\bigg [(1-p)\nu\!+\! \Big (\frac{p-1}{3}\nu_0^{1-\gamma}\!+\!\frac{3C^2d^3}{2(p-1)}\nu_0^{\gamma-1}\!+\!\frac{3C^2d^4}{4(p-1)}\nu_0^{\gamma-1}\!+\!K\Big )\nu^\gamma\!+\!2r_0\notag\\
&\phantom{\Bigg\{}\qq+rd\bigg (\ve_1 +\frac{3d}{4\ve_1}\bigg )+\bigg (3d\varepsilon_2\rho^2+\frac{3}{4\varepsilon_2}\bigg )\bigg ]|D^2_xu|^2\notag\\
&\phantom{\Bigg\{}+\bigg [(1\!-\!p)\nu \!+\! \bigg (\frac{2(p-1)}{3}\nu_0^{\gamma-1}\!+\!\frac{3C^2d^3}{4(p-1)}\nu_0^{\gamma-1}\!+\!\frac{(p-1)d}{3}\nu_0^{\gamma-1}+3K\bigg )\nu^\gamma\notag\\
&\phantom{\Bigg\{}\qq+3r_0+4r\ve_1d+7d\varepsilon_2\rho^2\bigg ]|D^3_xu|^2
\bigg\}w_\tau^{1-\frac{2}{p}}+(p-1)c_0w_{\tau}\\
&+\frac{3pd^3}{4\ve_2}u^2w_\tau^{1-\frac{2}{p}}+p\|c\|_{\infty}\tau^{\frac{p}{2}}.
\end{align*}
Next, we choose $\ve_1=3\sqrt{5}/10$, if $d=1$, and $\ve_1=\sqrt{3d(d+1)}/4$ otherwise (which is the point where
the function $x\mapsto d \max\{(d^2+d)/(4x),(4x^2+3d)/(8x),4x/3\}$ attains its minimum value) and $\ve_2=3L/(7d)$,
to get
\begin{align*}
\psi_\tau\le &p(p-1)w_\tau^{1-\frac{2}{p}}(c_3(p)\nu^\gamma-\nu)\sum_{j=1}^3|D_x^ju|^2+[(p-1)c_0+ pc_{d,3}]w_\tau
+p\|c\|_{\infty}\tau^{\frac{p}{2}},
\end{align*}
where $c_{d,3}=7d^2(5\vee3d^2)/(12L)$, $c_3(p)=\max\{{\mathscr K}_{i,p}, i=1,2,3\}$ and
\begin{align*}
{\mathscr K}_{1,p}=& \frac{3C^2d^3}{4(p-1)}\nu_0^{\gamma-1}+M,
\\[1mm]
{\mathscr K}_{2,p}=& \frac{1}{3}(p-1)\nu_0^{1-\gamma}
+ \frac{3C^2d^3 (d+2)}{4(p-1)}\nu_0^{\gamma-1}+K+2M, \\[2mm]
{\mathscr K}_{3,p}=& \frac{1}{3}(d+2)(p-1)\nu_0^{1-\gamma}
+ \frac{3C^2d^3 }{4(p-1)}\nu_0^{\gamma-1} +3K+3M,
\end{align*}
$M$ being the constant in Hypothesis 3.1(3)(iv).

Hence, the function $w_{\tau}$ satisfies the differential inequality
$D_tw_{\tau}-\A w_{\tau}\le \s_{3,p}w_\tau+p\|c\|_{\infty}\tau^{\frac{p}{2}}$
in $(s,+\infty)\times\Rd$, where $\s_{3,p}$ is as in the statement.

Now, we set $z_\tau(t,x)=e^{-\s_{3,p}(t-s)}(w_\tau(t,x)-p\|c\|_{\infty}\tau^{p/2}(t-s))$, for any $(t,s)\in (s,+\infty)\times\Rd$,
and observe that the function $z_\tau$ solves the problem
\begin{equation*}
\left\{
\begin{array}{lll}
D_tz_\tau(t,x)\le \mathcal{A}z_\tau(t,x),  &t>s,\;\;\,x\in\Rd,\\[2mm]
z_\tau(s,x)=
\bigg (\ds\sum_{k=0}^3|D^kf(x)|^2+\tau\bigg )^{\frac{p}{2}}, &\qq\q\;\, x\in\Rd.
\end{array}
\right.
\label{pb:point1}
\end{equation*}
Then, the maximum principle in \cite[Proposition 2.2]{AngLor10Com} implies that,
\begin{eqnarray*}
z_{\tau}\le G(\cdot,s)(|f|^2+|\nabla f|^2+|D^2f|^2+|D^3f|^2+\tau)^{\frac{p}{2}}
\end{eqnarray*}
in $(s,+\infty)\times\Rd$, whence estimate \eqref{aaaa}, with $k=h=3$, follows letting $\tau\to 0^+$.

To get \eqref{aaaa} when $h=k=2$, it suffices to apply the previous
arguments to the function
$w_\tau=(u^2+|\nabla_xu|^2+|D^2_xu|^2+\tau)^{p/2}$.
Arguing as above and taking $\varepsilon=(p-1)\nu_0^{1-\gamma}/(2Cd)$,
we prove that $D_tw_{\tau}-\A w_{\tau}\le\Psi_{\tau}$, where
\begin{align*}
\Psi_\tau= &p\Bigg\{\bigg [(1-p)\nu+\frac{C^2d^3}{2(p-1)}\nu_0^{\gamma-1}\nu^\gamma+r_0+r\frac{d^2}{4\ve_1}+\bigg (d\varepsilon_2\rho^2+
\frac{1}{2\varepsilon_2}\bigg )\bigg ]|\nabla_xu|^2\notag\\
&\phantom{\Bigg\{}+\bigg [(1-p)\nu+ \Big (\frac{p-1}{2}\nu_0^{1-\gamma}+\frac{C^2d^3}{p-1}\nu_0^{\gamma-1}+K\Big )\nu^\gamma\\
&\phantom{\Bigg\{+\bigg [}+2r_0+\ve_1rd+3d\varepsilon_2\rho^2\bigg ]|D^2_xu|^2\bigg\}w^{1-\frac{2}{p}}\\
&+\bigg [(p-1)c_0+\frac{pd}{2\ve_2}\bigg ]w_{\tau}+p\|c\|_{\infty}\tau^{\frac{p}{2}},
\end{align*}
Then, we take $\varepsilon_1=\sqrt{d/2}$, to minimize the maximum between $d^2(4\ve_1)^{-1}$ and $\ve_1d/2$,
and $\ve_2=2L/(3d)$. We thus get
\begin{align*}
\Psi_\tau\le &p\Bigg\{\bigg [(1-p)\nu+\frac{C^2d^3}{2(p-1)}\nu_0^{\gamma-1}\nu^{\gamma}+M\nu^{\gamma}+
\frac{3d}{4L}\bigg ]|\nabla_xu|^2\notag\\
&\phantom{\Bigg\{}+\bigg [(1-p)\nu+ \Big (\frac{p-1}{2}\nu_0^{1-\gamma}+\frac{C^2d^3}{p-1}\nu_0^{\gamma-1}+K+2M\Big )\nu^\gamma\bigg ]|D^2_xu|^2\bigg\}w_{\tau}^{1-\frac{2}{p}}\\
&+(p-1)c_0w_{\tau}+\frac{3pd^2}{4L}u^2w_{\tau}^{1-\frac{2}{p}}+p\|c\|_{\infty}\tau^{\frac{p}{2}}.
\end{align*}
Hence, \eqref{aaaa}, with $h=k=2$, follows with
$c_{d,2}=3d^2(4L)^{-1}$ and
\begin{equation*}
c_2(p)=\max\bigg\{\frac{C^2d^3\nu_0^{\gamma-1}}{2(p-1)}+M,
\frac{p-1}{2}\nu_0^{1-\gamma}+\frac{C^2d^3\nu_0^{\gamma-1}}{p-1}+K+2M\bigg\}.
\end{equation*}

Finally, to get \eqref{aaaa} with $h=k=1$, we consider the function $w_{\tau}=(u^2+|\nabla_xu|^2+\tau)^{p/2}$ which satisfies
the inequality $D_tw_{\tau}\le\A w_{\tau}+\Psi_{\tau}$, where
\begin{align*}
\Psi_{\tau}=&p\bigg\{\bigg [(1-p)\nu+r_0+r+\frac{Cd^2}{4\ve}\nu^{\gamma}+d\ve_2\rho^2\bigg ]|\nabla_xu|^2\\
&\phantom{\bigg\{}
+[(1-p)\nu+Cd\varepsilon\nu^{\gamma}]|D^2_xu|^2\bigg\}w_{\tau}^{1-\frac{2}{p}}\\
&+\frac{p}{4\ve_2}u^2w_{\tau}^{1-\frac{2}{p}}+(p-1)c_0w_{\tau}+p\|c\|_{\infty}\tau^{\frac{p}{2}}
\end{align*}
for any $\ve,\ve_2>0$. We take $\varepsilon=(p-1)\nu_0^{1-\gamma}(Cd)^{-1}$ and $\ve_2=L/d$ to get
\begin{align*}
\psi_{\tau}\le &p\bigg\{\bigg [(1-p)\nu+\bigg (\frac{C^2d^3}{4(p-1)}\nu_0^{\gamma-1}+M\bigg )\nu^{\gamma}\bigg ]|\nabla_xu|^2
\bigg\}w_{\tau}^{1-\frac{2}{p}}\\
&+\frac{pd}{4L}u^2w_{\tau}^{1-\frac{2}{p}}+(p-1)c_0w_{\tau}+p\|c\|_{\infty}\tau^{\frac{p}{2}}.
\end{align*}
Thus, \eqref{aaaa} (with $h=k=1$) follows with $c_{d,1}=d/(4L)$
 and $c_1(p)=\frac{C^2d^3\nu_0^{\gamma-1}}{4(p-1)}+M$.

{\em Step 2.}
Here we prove estimate \eqref{aaaa} for $p \in (1,2]$ in the general case. Just to fix ideas, we consider the case $k=3$.
We introduce two sequences $(\vartheta_n)$ and $(\psi_n)$ of smooth cut-off functions such that $\chi_{B_n}\le\vartheta_n\le\chi_{B_{n+1}}$
and $\chi_{(s+2/n,s+4n)}\le\psi_n\le \chi_{(s+1/n,s+8n)}$ for any $n\in\N$.
Without loss of generality, we can assume that $\|D^{\beta}\vartheta_n\|_{\infty}\le C_0n^{-|\beta|}$ for any $|\beta|\le 3$ and some positive
constant $C_0$.
For any $n\in\N$ we set $c_n(t,x)=\psi_n(t)\vartheta_n(x)c(t,x)$ for any $(t,x)\in (s,+\infty)\times\Rd$. Clearly each function $c_n$ is bounded. Moreover,
$|D^{\eta}_xc_n|\le \varrho_n:=(1+C_1n^{-1})\varrho$ for any $n\in\N$, $|\eta|=1,2,3$ and some positive constant $C_1$.
Note that in view of Hypothesis 3.1(3)(iv) (where, now, $L$ and $M$ are constants) it follows that
$r_0+L_3r+L^{(n)}_3\varrho_n^2\le M\nu^{\gamma}$ in $(s,+\infty)\times\Rd$, where $L_3^{(n)}=L[1+(C_1^2+2C_1)n^{-1}]^{-1}$.
By Step 1 it follows that the solution $u_n=G^{(n)}(\cdot,s)f$ to problem
\begin{eqnarray*}
\left\{
\begin{array}{ll}
D_tu_n={\rm Tr}(QD^2_xu_n)+\langle b,\nabla_xu_n\rangle+c_nu_n, & {\rm in}~(s,+\infty)\times\Rd,\\[1mm]
u_n(s,\cdot)=f, & {\rm in}~\Rd,
\end{array}
\right.
\end{eqnarray*}
provided by Lemma \ref{lemma-2.1} satisfies the estimate
\begin{equation}
|D^3_xu_n|^p\le e^{\sigma_{3,p,n}}G^{(n)}(\cdot,s)\bigg (\sum_{j=0}^3|D^hf|^2\bigg )^{\frac{p}{2}}
\label{jack}
\end{equation}
in $(s,+\infty)\times\Rd$, where $\sigma_{3,p,n}$ is defined by \eqref{sigma-k-p} with $c_{d,3}$
being replaced by $c_{d,3,n}=7d^2(5\vee 3d^2)/(12L_n)$.
Note that $\sigma_{3,p,n}$ converges to $\sigma_{3,p}$ as $n\to +\infty$. By Lemma \ref{lem-bonaventura} we
can let $n$ tend to $+\infty$ in both the side of \eqref{jack} obtaining \eqref{aaaa}.

{\em Step 3.}
Finally, the case when $p>2$ follows easily from the case $p=2$. Indeed,
\begin{align*}
|D^k_xG(t,s)f|^p=(|D^k_xG(t,s)f|^2)^{\frac{p}{2}}\le\bigg [e^{\s_{k,2}(t-s)}G(t,s)\Big (\sum_{j=0}^k|D^jf|^2\Big )\bigg ]^{\frac{p}{2}},
\end{align*}
for $k=1,2,3$, and  we get (\ref{aaaa}) just observing that
$(G(t,s)h)^{p/2} \le G(t,s)h^{p/2}$,
for any $t>s$ and any nonnegative function $h\in C_b(\Rd)$.
\end{proof}

\begin{remark}
We stress that the condition $|c|\le \varrho^2$ in $\Rd$ is not needed to prove \eqref{aaaa} (with $h=k$) for nonnegative functions $f$.
Indeed, if $f\in C_b(\Rd)$ is nonnegative, then the function $G(t,s)f$ is strictly positive in $\Rd$ as a consequence of the strong maximum principle.
Hence, for such functions $f$, we can replace the function $w_{\tau}$ used in the proof of Theorem \ref{cucu} with the function
$w_{\tau}=\left (\sum_{j=0}^k|D^j_xu|^2\right )^{p/2}$, which is everywhere positive in $(s,+\infty)\times\Rd$.
\end{remark}

Now, we prove estimate \eqref{aa}.

\begin{hyps00}
The potential term $c$ of the operator $\A$ identically vanishes in $I\times\Rd$. Moreover,

\begin{itemize}
\item
if $k=1$, then the function $r_0+\frac{C^2d^3\nu_0^{\gamma-1}}{4(p_0-1)}\nu^{\gamma}$ is bounded from above in $I\times\Rd$ for some $p_0\in (1,+\infty)$,
where $\gamma$ is as in Hypotheses $4.1(k)$;
\item
if $k=2,3$, then there exists $M_k\in \R$ such that $r_0+L_kr\le M_k\nu^\gamma$,
where $M_k$ is any positive constant
and $L_k$ is the same constant as in Hypothesis $3.1(k)(iv)$.
\end{itemize}
\end{hyps00}

\begin{theorem}
\label{teo-sharzan}
Let Hypotheses $4.1(k)$ be satisfied, with condition $3.1(iv)$ being replaced by Hypothesis $4.2(k)$.
Then, estimate \eqref{aa} is satisfied with $\Gamma^{(1)}_{p,k}(r)=e^{\phi_{p,k}r}$
 for any $p\in (1,+\infty)$, if $k=2,3$, and for any $p \in [p_0,+\infty)$, if $k=1$, where $\phi_{p,k}$ can be explicitly computed $($see the proof\hskip 1pt$)$.
\end{theorem}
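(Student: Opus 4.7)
The strategy is to reproduce the Bernstein argument of Theorem \ref{cucu}, but applied to the modified test function
\[
w_\tau=\bigg(\sum_{j=1}^k|D^j_x u|^2+\tau\bigg)^{p/2},\qquad\tau>0,\ u=G(\cdot,s)f.
\]
The point is that dropping the $j=0$ summand from the inner sum is legitimate only because Hypothesis $4.2(k)$ forces $c\equiv 0$: the terms $\mathscr J_4$ and the residual $p\|c\|_\infty\tau^{p/2}$ appearing in the proof of Theorem \ref{cucu}, which there had to be absorbed through a $u^2$ contribution, now disappear altogether.

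First I would verify that $w_\tau\in C([s,+\infty)\times\Rd)\cap C^{1,2}_{\rm loc}((s,+\infty)\times\Rd)$ and is strictly positive, thanks to Theorem \ref{C0-C3} and Proposition \ref{prop-3.4}. A routine computation, using $c\equiv 0$, gives
\[
D_t w_\tau-\A w_\tau=p\,w_\tau^{1-2/p}\sum_{i=1}^3\widetilde{\mathscr J}_i(u)+p(2-p)\,w_\tau^{1-4/p}\langle Q\widetilde\xi_u,\widetilde\xi_u\rangle,
\]
where $\widetilde{\mathscr J}_i(u)$ and $\widetilde\xi_u$ are obtained from $\mathscr J_i(u)$ and $\xi_u$ in the proof of Theorem \ref{cucu} by deleting every summand carrying an undifferentiated factor $u$. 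The estimates \eqref{estim-Q1}, \eqref{estim-Q2}, \eqref{estim-B2} and their higher-order counterparts, together with the Cauchy-Schwarz bound $\langle Q\widetilde\xi_u,\widetilde\xi_u\rangle\le w_\tau^{2/p}\sum_{j=1}^k\sum_{|\alpha|=j}\mathscr Q_0(D^\alpha_x u)$ proved there, apply verbatim to each $\widetilde{\mathscr J}_i$.

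Choosing the auxiliary parameter $\epsilon$ exactly as in Theorem \ref{cucu} so as to annihilate the coefficient of $|D^{k+1}_x u|^2$, and then optimizing $\epsilon_1$ in the drift estimates, one arrives at
\[
D_t w_\tau-\A w_\tau\le p\,w_\tau^{1-2/p}\sum_{j=1}^k\Bigl[(1-p)\nu+A_{j,p}\nu^\gamma+B_j r_0+C_j r\Bigr]|D^j_x u|^2,
\]
with constants $A_{j,p},B_j,C_j$ readable off from the calculation and, crucially, no leftover $\tau$-term. For $k=2,3$, Hypothesis $4.2(k)$, in which $M_k>0$ is arbitrary, lets us subsume $B_j r_0+C_j r$ into a multiple of $\nu^\gamma$ and yields a differential inequality $D_t w_\tau-\A w_\tau\le\phi_{p,k}w_\tau$ with an explicit constant $\phi_{p,k}$. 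For $k=1$ the relevant combination reduces precisely to $r_0+\frac{C^2d^3\nu_0^{\gamma-1}}{4(p-1)}\nu^\gamma$, which, by monotonicity of $1/(p-1)$ and Hypothesis $4.2(1)$, is uniformly bounded from above as soon as $p\ge p_0$; the same conclusion follows.

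Having the differential inequality, the proof concludes as in Theorem \ref{cucu}: the function $z_\tau=e^{-\phi_{p,k}(t-s)}w_\tau$ satisfies $D_t z_\tau\le\A z_\tau$ with initial datum $(\sum_{j=1}^k|D^j f|^2+\tau)^{p/2}$, so the variant of the maximum principle of \cite[Proposition~2.2]{AngLor10Com} gives
\[
z_\tau(t,x)\le G(t,s)\bigg(\sum_{j=1}^k|D^j f|^2+\tau\bigg)^{\!p/2}\!(x),
\]
and letting $\tau\to 0^+$ proves \eqref{aa} for $p\in(1,2]$ (respectively $p\in[p_0,2]$ when $k=1$) with $\Gamma^{(1)}_{p,k}(r)=e^{\phi_{p,k}r}$. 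The range $p>2$ is covered exactly as in Step~3 of Theorem \ref{cucu}: raise the $p=2$ inequality to the power $p/2$ and apply \eqref{fp}, which is available because $c_0=0$, so that $\phi_{p,k}=\frac{p}{2}\phi_{2,k}$. The only real obstacle is the careful bookkeeping of constants when the $u^2$ contributions are dropped, and recognizing that the factor $1/(p-1)$ arising in the $k=1$ case is exactly what Hypothesis $4.2(1)$ is designed to control, which is what forces the restriction $p\ge p_0$ in that case.
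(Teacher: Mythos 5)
Your proposal is correct and follows essentially the same route as the paper's proof: the paper likewise applies the Bernstein computation of Theorem \ref{cucu} to $w_\tau=(\sum_{j=1}^k|D^j_xu|^2+\tau)^{p/2}$ (with the $j=0$ summand dropped, which is what $c\equiv 0$ permits), kills the $|D^{k+1}_xu|^2$ coefficient by the same choice of $\varepsilon$, absorbs $r_0+L_kr$ into $M_k\nu^{\gamma}$ via Hypothesis $4.2(k)$ for $k=2,3$, isolates $r_0+\tfrac{C^2d^3\nu_0^{\gamma-1}}{4(p-1)}\nu^{\gamma}$ for $k=1$, and concludes with the maximum principle and the $p>2$ reduction. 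The only slip is in your displayed intermediate inequality, where the $(1-p)\nu$ term should not appear in the coefficient of $|\nabla_xu|^2$ (the term $-\mathscr{Q}_0(u)$ that produced it in Theorem \ref{cucu} is gone once $u^2$ is removed from $w_\tau$) — but your subsequent $k=1$ discussion implicitly corrects this, and it is precisely why the restriction $p\ge p_0$ is needed there.
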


\begin{proof}
Since the proof is similar to that of Theorem \ref{cucu} we adopt here the notation therein introduced and limit ourselves to sketching it when $p \in(1,2]$. Indeed the case $p>2$ follows from the case $p=2$ and the Jensen inequality.
We begin with the case $k=3$. For any $\tau, \ve_0,\ve_1,\ve>0$ the function $w_\tau=\textstyle{(\sum_{k=1}^3|D^k u|^2+\tau)^{p/2}}$ satisfies the inequality $D_tw_{\tau}-\A w_{\tau}\le\Psi_{\tau}$ in $(s,+\infty)\times\Rd$, where
\begin{align*}
\Psi_{\tau}= p\{&{\mathscr H}_{1,p}^{(3)}(\varepsilon_0,\varepsilon_1)|\nabla_xu|^2
+{\mathscr H}_{2,p}^{(3)}(\varepsilon_0,\ve_1,\varepsilon)|D^2_xu|^2+{\mathscr H}_{3,p}^{(3)}(\ve_1,\ve)|D^3_xu|^2\notag\\
&+((1-p)\nu^{1-\gamma} +3C\ve d)\nu^\gamma|D^4_xu|^2\}w_{\tau}^{1-\frac{2}{p}}
\end{align*}
and
\begin{align}
&{\mathscr H}_{1,p}^{(3)}(\varepsilon_0,\varepsilon_1)=
\frac{Cd^2}{4\ve_0}\nu^\gamma+r_0+r\frac{d^3+d^2}{4\ve_1}
\label{sharzan-2}\\
&{\mathscr H}_{2,p}^{(3)}(\varepsilon_0,\ve_1,\varepsilon)=(1\!-\!p)\nu\!+\!\Big (C\ve_0 d\!+\!\frac{Cd^2(2+d)}{4\ve}\!+\!K\Big )\nu^\gamma\!+\!2r_0
\!+\!rd\bigg (\ve_1\!+\!\frac{3d}{4\ve_1}\bigg ),
\label{sharzan-3}\\
&{\mathscr H}_{3,p}^{(3)}(\ve,\ve_1)=(1-p)\nu+ \bigg ( 2C\ve d+\frac{3Cd^2}{4\ve}+
\ve Cd^2+3K\bigg )\nu^\gamma+3r_0+4r\ve_1d.
\label{sharzan-4}
\end{align}
Choosing $\ve_0>-Cd^2/(4M_3)$ and $\ve,\ve_1$ as in the proof of Theorem \ref{cucu} we deduce that
\begin{equation}\label{fin}
D_t w_\tau-\A w_\tau \le  p\phi_{p,3}
\big(|\nabla_xu|^2+|D^2_xu|^2+|D^3_xu|^2\big)w_{\tau}^{1-\frac{2}{p}}\le p\phi_{p,3}w_\tau+p\phi_{p,3}^{-}\tau^{p/2},
\end{equation}
where $\phi_{3,p}$ is the minimum attained by the function $(-Cd^2/(4M_3),+\infty)\ni\ve_0\mapsto \max\{{\mathscr C}_1^{(3)}(p,\ve_0),{\mathscr C}_2^{(3)}(p,\ve_0),{\mathscr C}_3^{(3)}(p)\}$
and ${\mathscr C}_1^{(3)}(p, \ve_0)=\left (\frac{Cd^2}{4\ve_0}+M_3\right )\nu_0^\gamma$,
\begin{align*}
&{\mathscr C}_2^{(3)}(p, \ve_0)=\sup_{\Rd}\bigg [(1-p)\nu+\Big(C\ve_0 d+\frac{3C^2d^3(2+d)}{4(p-1)}\nu_0^{\gamma-1}+K+2M_3\Big)\nu^{\gamma}\bigg ],\\
&{\mathscr C}_3^{(3)}(p)=\sup_{\Rd}\bigg [(1\!-\!p)\nu\!+\!\bigg (\frac{(2+d)(p-1)}{3}\nu_0^{\gamma-1}\!+\!\frac{9C^2d^3}{4(p-1)\nu_0^{\gamma-1}}\!+\!3K\!+\!3M_3\bigg )\nu^{\gamma}\bigg ].
\end{align*}
From \eqref{fin} we get \eqref{aa} with $k=3$ arguing as in the proof of Theorem \ref{cucu}.

To get the claim when $k=2$, let consider the function $w_\tau= (\sum_{k=1}^2|D^k u|^2+\tau)^{p/2}$ which satisfies the inequality
\begin{align*}
D_tw_{\tau}-\A w_{\tau}\le p\{&{\mathscr H}_{1,p}(\varepsilon_0,\varepsilon_1)^{(2)}|\nabla_xu|^2+
{\mathscr H}_{2,p}(\varepsilon_0,\varepsilon_1,\ve)^{(2)}|D^2_xu|^2\\
&+[(1-p)\nu+2C\ve d\nu^\gamma]|D^3_xu|^2\}w_{\tau}^{1-\frac{2}{p}},
\end{align*}
for any $\ve_0,\ve_1,\ve>0$,
where ${\mathscr H}_{1,p}^{(2)}(\varepsilon_0,\varepsilon_1)$ is defined as ${\mathscr H}_{1,p}^{(3)}(\varepsilon_0,\varepsilon_1)$,
with $(d^3+d^2)/(4\ve_1)$ replaced by $d^2/(4\ve_1)$, and ${\mathscr H}_{2,p}^{(2)}(\varepsilon_0,\varepsilon_1,\ve)$
is defined as ${\mathscr H}_{2,p}^{(3)}(\varepsilon_0,\varepsilon_1,\ve)$, with $Cd^2(2+d)/(4\ve)$ and $\ve_1+3d/(4\ve_1)$
replaced, respectively, by $Cd^2/(2\ve)$ and $\ve_1$.
Taking $\varepsilon=(p-1)\nu_0^{1-\gamma}/(2Cd)$, $\ve_1=\sqrt{d/2}$ and $\ve_0>-Cd^2/(4M_2)$ we get
$\psi_\tau\le p\phi_{p,2}w_\tau+p\phi_{p,2}^{-}\tau^{p/2}$,
where $\phi_{p,2}$ is the minimum attained by the function $(-Cd^2/(4M_2),+\infty)\ni\ve_0\mapsto \max\{{\mathscr C}^{(2)}_1(p,\ve_0),{\mathscr C}^{(2)}_2(p,\ve_0)\}$,
${\mathscr C}^{(2)}_1(p, \ve_0)=\left (\frac{Cd^2}{4\ve_0}+M_2\right )\nu_0^\gamma$ and
\begin{eqnarray*}
{\mathscr C}_2^{(2)}(p, \ve_0)=\sup_{\Rd}\bigg [(1-p)\nu+ \bigg (C\ve_0 d+\frac{C^2d^3}{p-1}\nu_0^{\gamma-1}+K+2M_2\bigg )\nu^\gamma\bigg ].
\end{eqnarray*}
Estimate \eqref{aa} with $k=2$ follows.

Finally, to prove \eqref{aa} with $k=1$, we consider the function $w_\tau=(|\nabla_x u|^2+\tau)^{p/2}$. In this case we get
\begin{align*}
D_tw_{\tau}-\A w_\tau\le &p\bigg\{\bigg (r_0+\frac{Cd^2}{4\ve}\nu^{\gamma}\bigg )|\nabla_xu|^2
+[(1-p)\nu+Cd\varepsilon\nu^{\gamma}]|D^2_xu|^2\bigg\}w_{\tau}^{1-\frac{2}{p}}
\end{align*}
in $(s,+\infty)\times\Rd$, for any $\ve>0$. We take $\varepsilon=(p-1)\nu_0^{1-\gamma}(Cd)^{-1}$ to get
$D_tw_{\tau}-\A w_{\tau}\le p\phi_{1,p}w_\tau$
in $(s,+\infty)\times\Rd$ for $p \ge p_0$, where
\begin{eqnarray*}
\phi_{1,p}=\sup_{I\times\Rd}\left (r_0+\frac{C^2d^3\nu_0^{\gamma-1}}{4(p-1)}\nu^{\gamma}\right ).
\end{eqnarray*}

Thus, \eqref{aa} follows also in this case.
\end{proof}

Under additional assumptions the above estimates can be proved also for $p=1$.

\begin{hyps1}
The diffusion coefficients $q_{ij}$ $(i,j=1,\ldots,d)$ are independent of $x$ and $c\equiv 0$ in $I\times\Rd$.
Moreover, $r_0+L_k'r$ is bounded from above in $I\times\Rd$, where
$L_1'=r_0$, $L_2'=(d/2)^{3/2}$, $L_3'=d\sqrt{3(d+d^2)}/3$.
\end{hyps1}

\begin{theorem}
Under Hypotheses $4.1(k)$, with Hypothesis $3.1(k)(iv)$ being replaced by Hypothesis $4.3(k)$, estimate \eqref{aa} holds true also with $p=1$.
\end{theorem}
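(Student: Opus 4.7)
The plan is to adapt the Bernstein-type argument used in the proof of Theorem \ref{teo-sharzan} directly to the case $p=1$, rather than trying to obtain \eqref{aa} at $p=1$ as a limit from $p\to 1^+$. Such a limiting argument in fact fails whenever $\nu$ is unbounded, because the constant $\phi_{p,k}$ of Theorem \ref{teo-sharzan} may blow up as $p\to 1^+$: the ``good'' negative term $(1-p)\nu$, which for $p>1$ absorbs contributions of order $\nu^\gamma$, disappears at the limit. The point of the proof is that, under Hypothesis $4.3(k)$, no such absorption is needed in the first place.

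Fix $f\in C^k_b(\Rd)$, set $u=G(\cdot,s)f$ and, for $\tau>0$, define
\[
w_\tau=\bigg(\sum_{j=1}^k|D^j_xu|^2+\tau\bigg)^{1/2}.
\]
Reproducing the differentiation scheme of Theorem \ref{cucu} with $p=1$ and restricted to multi-indices with $|\alpha|\ge 1$, one obtains
\[
D_tw_\tau-\A w_\tau=w_\tau^{-1}\sum_{i=1}^4\mathscr{J}_i(u)+w_\tau^{-3}\langle Q\xi_u,\xi_u\rangle,
\]
where the $\mathscr{J}_i$'s and $\xi_u$ have the meaning of Theorem \ref{cucu}. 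Two immediate simplifications arise: $\mathscr{J}_2(u)\equiv 0$ because $q_{ij}$ does not depend on $x$, and $\mathscr{J}_4(u)\equiv 0$ together with the vanishing of all explicit $c$-terms because $c\equiv 0$.

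The crucial step is a cancellation in what remains. A double application of the Cauchy--Schwarz inequality as in \eqref{cauchy-derivate} gives
\[
\langle Q\xi_u,\xi_u\rangle\le\bigg(\sum_{|\alpha|=1}^k|D^\alpha_xu|^2\bigg)\sum_{|\alpha|=1}^k\mathscr{Q}_0(D^\alpha_xu)\le w_\tau^2\sum_{|\alpha|=1}^k\mathscr{Q}_0(D^\alpha_xu),
\]
so that $w_\tau^{-3}\langle Q\xi_u,\xi_u\rangle\le w_\tau^{-1}\sum_{|\alpha|=1}^k\mathscr{Q}_0(D^\alpha_xu)$. Adding this to $w_\tau^{-1}\mathscr{J}_1(u)$ precisely annihilates the negative $-\sum\mathscr{Q}_0$ terms in $\mathscr{J}_1$, leaving only the $\mathscr{B}_1$-part of $\mathscr{J}_1$ together with the whole $\mathscr{J}_3(u)$. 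All the surviving terms involve only $r_0$ and $r$; applying the Cauchy--Schwarz/AM--GM bounds for $\mathscr{B}_2(\zeta)$ and for $\sum_{ijhk}D_{jhk}b_iD_i\zeta D_{jhk}\zeta$ already obtained in the proof of Theorem \ref{cucu}, with the parameter $\varepsilon_1$ chosen so as to produce precisely the constants $L_k'$ of Hypothesis $4.3(k)$, the commutator is majorized by $C_k(r_0+L_k'r)w_\tau^2$ for a constant $C_k>0$ depending only on $k$ and $d$.

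Invoking Hypothesis $4.3(k)$ to pick $M_k\in\R$ with $r_0+L_k'r\le M_k$, one concludes
\[
D_tw_\tau-\A w_\tau\le C_kM_k^+w_\tau\quad\text{in }(s,+\infty)\times\Rd.
\]
The maximum principle of \cite[Proposition 2.2]{AngLor10Com}, applied to $(t,x)\mapsto e^{-C_kM_k^+(t-s)}w_\tau(t,x)$ (which is bounded on each strip $[s,T]\times\Rd$ by Theorem \ref{C0-C3}), yields $w_\tau(t,\cdot)\le e^{C_kM_k^+(t-s)}G(t,s)w_\tau(s,\cdot)$, and letting $\tau\to 0^+$ via dominated convergence produces \eqref{aa} with $p=1$ and $\Gamma^{(1)}_{1,k}(r)=e^{C_kM_k^+r}$. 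The main technical delicacy, as in the $p>1$ cases, is the choice of the AM--GM parameter $\varepsilon_1$: the specific values of $L_k'$ in Hypothesis $4.3(k)$ are tailored exactly so that a single such $\varepsilon_1$ simultaneously dominates every coefficient in the commutator by a fixed multiple of $r_0+L_k'r$.
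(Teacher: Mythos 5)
Your proposal is correct and follows essentially the same route as the paper: a direct Bernstein argument on $w_\tau=(\sum_{j=1}^k|D^j_xu|^2+\tau)^{1/2}$, using the exact cancellation at $p=1$ between the $w_\tau^{-3}\langle Q\xi_u,\xi_u\rangle$ term and the $-\mathscr{Q}_0$ terms, the vanishing of $\mathscr{Q}_1,\mathscr{Q}_2$ (since $Q$ is $x$-independent) and of the $c$-terms, the choice of $\varepsilon_1$ matched to $L_k'$ so that Hypothesis $4.3(k)$ bounds the surviving $r_0$- and $r$-terms, and the maximum principle followed by $\tau\to 0^+$. The only cosmetic slip is writing the bound as $C_k(r_0+L_k'r)w_\tau^2$ where the quantity being bounded is $w_\tau$ times the right-hand side of the differential inequality; the ensuing inequality $D_tw_\tau-\A w_\tau\le C_kM_k^+w_\tau$ is the correct one.
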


\begin{proof}
We first consider the case $k=3$. Since the diffusion coefficients are independent of $x$, for any $\tau>0$ the function
$w_{\tau}=(|\nabla_xu|^2+|D^2_xu|^2+|D^3_xu|^2+\tau)^{1/2}$ satisfies the differential inequality
\begin{align*}
D_tw_{\tau}-\A w_{\tau}\le w_{\tau}^{-1}[{\mathscr H}_1^{(3)}(\varepsilon_1)|\nabla_xu|^2
+{\mathscr H}_2^{(3)}(\ve_1)|D^2_xu|^2+{\mathscr H}_3^{(3)}(\ve_1)|D^3_xu|^2],
\end{align*}
where
\begin{align*}
&{\mathscr H}_1^{(3)}(\varepsilon_1)=
r_0+r\frac{d^3+d^2}{4\ve_1},\qquad\;\, {\mathscr H}_2^{(3)}(\ve_1)=2r_0+rd\bigg (\ve_1+\frac{3d}{4\ve_1}\bigg ),\\
&{\mathscr H}_3^{(3)}(\ve_1)=3r_0+4r\ve_1dm,
\end{align*}
(see \eqref{sharzan-2}, \eqref{sharzan-3} and \eqref{sharzan-4}).
If we take $\ve_1=\sqrt{3(d+d^2)}/4$, then the functions ${\mathscr H}^{(3)}_j$ are all bounded in $I\times\Rd$ and estimate \eqref{aa} follows.

On the other hand, to prove \eqref{aa} with $k=2$, it suffices to observe that the function
$w_{\tau}=(|\nabla_xu|^2+|D^2_xu|^2+\tau)^{1/2}$ solves the differential equation
$D_tw_{\tau}-\A w_{\tau}\le w_{\tau}^{-1}[{\mathscr H}_1^{(2)}(\varepsilon_1)|\nabla_xu|^2
+{\mathscr H}_2^{(2)}(\ve_1)|D^2_xu|^2]$, where
${\mathscr H}^{(1)}_2(\ve_1)$ is defined as ${\mathscr H}_1^{(3)}$, with $d^3+d^2$ being replaced by $d^2$
and ${\mathscr H}^{(2)}_2(\ve_1)$ is defined as ${\mathscr H}_2^{(3)}$, with $\ve_1+3d/(4\ve_1)$ being replaced by $\ve_1$.
If we take $\ve_1=(d/2)^{1/2}$, then ${\mathscr H}^{(2)}_1(\ve_1)$ and ${\mathscr H}^{(2)}_2(\ve_1)$ are bounded in $I\times\Rd$
and \eqref{aa} follows also in this case.

Finally, the function $w_{\tau}=(|\nabla_xu|^2+\tau)^{1/2}$ satisfies the differential inequality
$D_tw_{\tau}-\A w_{\tau}\le r_0w_{\tau}$ and \eqref{aa} follows also in this case with $\phi_{1,1}=r_0$.
\end{proof}

To conclude this section we complete the proof of estimate \eqref{aaaa}. In the proof of Theorem \ref{coco} we will make use of the following result
and the following additional assumption.

\begin{hypothesis}
\label{hyp-ultima}
For any bounded interval $J\subset I$ there exists a function $\varphi_J\in C^2(\Rd)$, which blows up as $|x|\to +\infty$, such that
$\A\varphi_J\le M_J$ in $J\times\Rd$ and some positive constant $M_J$.
\end{hypothesis}

Clearly, this assumption is stronger than Hypothesis \ref{hyp-1}(iv) and it allows to prove that, if $(f_n)\in C_b(\Rd)$ is a bounded sequence
which converges locally uniformly to a function $f\in C_b(\Rd)$, as $n\to +\infty$, then
$G(\cdot,\cdot)f_n$ converges to $G(\cdot,\cdot)f$ locally uniformly in $\{(s,t)\in I\times I: s\le t\}\times\Rd$.
As a byproduct of this result, it follows that the function $(s,t,x)\mapsto (G(t,s)g)(x)$ is continuous in $\{(s,t)\in I\times I: s\le t\}\times\Rd$
for any $g\in C_b(\Rd)$. For further details, we refer the reader to \cite[Proposition 3.6, Theorem 3.7]{KunLorLun09Non}.

\begin{lemma}
\label{lemma-R-eps}
The following properties hold true.
\begin{enumerate}[\rm (i)]
\item
Let the sequence $(g_n)\in C((\sigma,t)\times\Rd)$
converge pointwise in $(\sigma,t)\times\Rd$ to a continuous function $g:(\sigma,t)\times \Rd\to\R$ satisfying $\sup_{n\in\N}\|g_n(\tau,\cdot)\|_{\infty}<+\infty$ for any $\tau\in (\sigma,t)$. Further, let $G_n(t,s)$ be the evolution operator associated with the realization in $C_b(B_{n})$ of the operator $\A$ with homogenous Dirichlet boundary conditions.
Then, $G_n(t,\cdot)g_n$ converges to $G(t,s)g$ pointwise in $(\sigma,t)\times\Rd$.
\item
Let $(f_n)\subset C_b(\Rd)$ be a bounded sequence converging to a function $f\in C_b(\Rd)$, locally uniformly in $\Rd$. If Hypothesis $\ref{hyp-ultima}$ is satisfied,
then, $G(t,s+1/n)f_n$ converges to $G(t,s)f$, locally uniformly in $\Rd$, for any $t>s$.
\end{enumerate}
\end{lemma}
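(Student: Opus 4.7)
The plan is to represent both operators through their Green kernels and exploit the monotone convergence $g^{(n)}\uparrow g$ established en route to Lemma \ref{lemma-2.1}. Let $g^{(n)}(t,\tau,x,y)$ denote the (Dirichlet) Green kernel of $G_n$ on $B_n$, extended by zero outside $B_n\times B_n$. By the comparison argument used in the proof of Lemma \ref{lemma-2.1} (writing $g - g^{(n)}$ as a supersolution vanishing at the initial time and nonnegative on the lateral boundary), one has $0 < g^{(n)}\le g$ on $(\tau,t]\times B_n\times B_n$, and $g^{(n)}(t,\tau,x,y)\nearrow g(t,\tau,x,y)$ pointwise as $n\to +\infty$. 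For fixed $(\tau,x)\in(\sigma,t)\times\Rd$ I would split
\begin{align*}
G_n(t,\tau)g_n(\tau,\cdot)(x)-G(t,\tau)g(\tau,\cdot)(x)=\int_{\Rd}[g_n(\tau,y)-g(\tau,y)]g^{(n)}(t,\tau,x,y)\,dy+I_n,
\end{align*}
where $I_n:=\int_{\Rd}g(\tau,y)[g^{(n)}(t,\tau,x,y)-g(t,\tau,x,y)]dy$. The first term vanishes by dominated convergence, using $|g_n(\tau,\cdot)-g(\tau,\cdot)|\le 2\sup_m\|g_m(\tau,\cdot)\|_\infty+\|g(\tau,\cdot)\|_\infty$, $g^{(n)}\le g$, and $g(t,\tau,x,\cdot)\in L^1(\Rd)$; the term $I_n$ vanishes by monotone convergence.

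\textbf{Plan for part (ii).} Fix $t>s$ and a compact $K\subset\Rd$; the target is uniform convergence on $K$. I would use the triangle decomposition
\begin{align*}
G(t,s+1/n)f_n-G(t,s)f=G(t,s+1/n)(f_n-f)+[G(t,s+1/n)f-G(t,s)f].
\end{align*}
The second bracket converges to $0$ uniformly in $x\in K$ thanks to the joint continuity of $(s,t,x)\mapsto(G(t,s)f)(x)$ recalled just before the lemma statement. For the first bracket, Hypothesis \ref{hyp-ultima} on the interval $J=[s,t]$ provides $\varphi_J$ blowing up at infinity with $\A\varphi_J\le M_J$; applying the maximum principle (valid under Hypothesis \ref{hyp-ultima}) to $(r,x)\mapsto\varphi_J(x)+M_J(r-s')-G(r,s')\varphi_J(x)$ yields the Lyapunov bound $G(t,s')\varphi_J\le\varphi_J+M_J(t-s')$ for $s'\in[s,t)$. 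Setting $C_R=\inf_{|y|\ge R}\varphi_J(y)$ (so $C_R\to+\infty$) and $M_1=\sup_n\|f_n\|_\infty+\|f\|_\infty$, the pointwise majorization
\begin{align*}
|f_n-f|\le\sup_{|y|\le R}|f_n(y)-f(y)|\cdot\mathds{1}+\frac{M_1}{C_R}\varphi_J
\end{align*}
gives, for $x\in K$,
\begin{align*}
|G(t,s+1/n)(f_n-f)(x)|\le e^{c_0(t-s)}\sup_{|y|\le R}|f_n-f|+\frac{M_1}{C_R}\bigl(\varphi_J(x)+M_J(t-s)\bigr).
\end{align*}
Choosing $R$ large (so the second summand is $<\varepsilon$ uniformly on $K$) and then $n$ large (using local uniform convergence of $f_n$ on $B_R$) makes the right-hand side arbitrarily small.

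\textbf{Main obstacle.} The delicate step is justifying the Lyapunov bound $G(t,s')\varphi_J\le\varphi_J+M_J(t-s')$ for the unbounded function $\varphi_J$: one cannot apply $G(t,s')$ directly as an operator on $C_b(\Rd)$. I would bypass this by approximating $\varphi_J$ by the bounded truncations $\varphi_J\wedge k$, applying the enhanced maximum principle of \cite{AngLor10Com,KunLorLun09Non} in finite balls, and then letting $k\to+\infty$ monotonically; alternatively, one directly tests the inequality on the Dirichlet approximants $G_n$ of part (i), where $\varphi_J$ is bounded on $B_n$, and passes to the limit via part (i). The rest of the argument is then a routine combination of the kernel monotonicity of (i) with the tightness estimate furnished by $\varphi_J$.
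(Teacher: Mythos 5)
Your argument is correct, and for part (i) it is essentially the paper's own proof: the paper also splits the difference into $G(t,\tau)|g_n(\tau,\cdot)-g(\tau,\cdot)|$ plus $|G(t,\tau)g(\tau,\cdot)-G_n(t,\tau)g(\tau,\cdot)|$, kills the first term via the kernel representation \eqref{kernel} and dominated convergence, and the second via the monotone convergence of the Dirichlet approximants $G_n(t,\tau)h\nearrow G(t,\tau)h$ for nonnegative $h$; your version merely writes this out at the level of the Green kernels $g^{(n)}\nearrow g$ (do note that for the term you call $I_n$ the tool is still dominated convergence, or equivalently $|I_n|\le\|g(\tau,\cdot)\|_\infty\int_{\Rd}(g-g^{(n)})\,dy$, since $g(\tau,\cdot)$ has no sign). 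For part (ii) you use the same triangle decomposition as the paper, but you diverge on the first summand: the paper simply invokes the fact, recalled just before the lemma and cited from \cite[Proposition 3.6, Theorem 3.7]{KunLorLun09Non}, that under Hypothesis \ref{hyp-ultima} the map $G(\cdot,\cdot)f_n$ converges to $G(\cdot,\cdot)f$ locally uniformly in $\{(s,t):s\le t\}\times\Rd$, whereas you reprove that fact from scratch via the Lyapunov bound $G(t,s')\varphi_J\le\varphi_J+M_J(t-s')$ and the tightness majorization of $|f_n-f|$ by $\sup_{B_R}|f_n-f|\cdot\mathds{1}+M_1C_R^{-1}\varphi_J$. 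This is exactly the mechanism behind the cited result, and your ``main obstacle'' paragraph correctly identifies (and resolves, by truncation or by testing on the Dirichlet approximants) the only delicate point, namely that $\varphi_J$ is unbounded so the Lyapunov inequality must be obtained by monotone approximation rather than by applying $G(t,s')$ as an operator on $C_b(\Rd)$; the one caveat is that the supersolution computation for $\varphi_J+M_J(r-s')$ uses $c\le 0$ (harmless here, since in the only place the lemma is used, the proof of Theorem \ref{coco}, one normalizes to $c_0\le 0$). So your route is more self-contained at the price of redoing work the paper outsources to a reference; both are sound.
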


\begin{proof} (i) Since $|G_n(t,\tau)f|\le G_n(t,\tau)|f|\le G(t,\tau)|f|$ for any function $f\in C_b(\Rd)$ (where we have used the fact that for any nonnegative function $g\in C_b(\Rd)$, $G_n(t,\tau)g$
pointwise increases to $G(t,\tau)g$ as $n\to +\infty$), we can estimate
\begin{align}
&|G_n(t,\tau)g_n(\tau,\cdot)-G(t,\tau)g(\tau,\cdot)|\notag\\
\le & G(t,\tau)|g_n(\tau,\cdot)- g(\tau,\cdot)|+|G(t,\tau)g(\tau,\cdot)-G_n(t,\tau)g(\tau,\cdot)|,
\label{star:star}
\end{align}
for any $m\in\N$.
Using the representation formula \eqref{kernel}, it is easy to check that
$G(t,\tau)|g_n(\tau,\cdot)- g(\tau,\cdot)|$ pointwise converges to zero as $n$ tends to $+\infty$.
On the other hand, the second term in the last side \eqref{star:star} vanishes as $n\to +\infty$ as it has been already remarked above.
Hence, the assertion follows.

(ii) Note that
\begin{align*}
|G(t,s+1/n)f_n - G(t,s)f|\le &|G(t,s+1/n)f_n-G(t,s+1/n)f|\\
&+|G(t,s+1/n)f-G(t,s)f|,
\end{align*}
for any $n\in\N$. Since $G(\cdot,\cdot)f_n$ converges to $G(\cdot,\cdot)f$ locally uniformly in
$\{(s,t)\in I\times I: s\le t\}\times\Rd$, as recalled above, the first term in the right-hand side of the previous inequality vanishes locally uniformly in $\Rd$.
Also the second term vanishes locally uniformly in $\Rd$ since the function $G(\cdot,\cdot)f$ is continuous in $\{(s,t)\in I\times I: s\le t\}\times\Rd$.
\end{proof}

\begin{theorem}\label{coco}
Let Hypotheses $4.1(k)$ be satisfied for some $k\in\{1,2,3\}$, with the functions in Hypothesis $3.1(k)(ii)$ being replaced by two positive constants $C_1$ and $C_2$.
Further, assume that also Hypothesis \ref{hyp-ultima} is satisfied. Then, estimate \eqref{aaaa} is satisfied
for some positive function $\Gamma_{p,k,k-1}$, which can be explicitly computed $($see the proof\hskip 2pt$)$.
\end{theorem}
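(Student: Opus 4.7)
\medskip
\noindent\emph{Proof plan.}
The heuristic is that the estimate must develop the factor $(t-s)^{-p/2}$ at $t=s$ (since \eqref{aaaa} with $h=k$ requires one more derivative on $f$), so the natural Bernstein auxiliary function is the one in which the top-order term carries a $(t-s)$ weight. Concretely, for each $k\in\{1,2,3\}$ and $\tau>0$, I set $u=G(\cdot,s)f$ and
\begin{align*}
w_\tau(t,x)=\bigg(\sum_{j=0}^{k-1}|D^j_xu(t,x)|^2+(t-s)|D^k_xu(t,x)|^2+\tau\bigg)^{\hs 3 p/2}.
\end{align*}
Two preliminary remarks: if $f$ is smooth enough (say $f\in C^{k+1}_b(\Rd)$), then by Theorem \ref{C0-C3} and Proposition \ref{prop-3.4} the function $w_\tau$ belongs to $C_b([s,s+T]\times\Rd)\cap C^{1,2}_{\rm loc}((s,+\infty)\times\Rd)$ for any $T>0$; and by the evolution law $G(t,s)=G(t,r)G(r,s)$ it suffices to prove \eqref{aaaa} for $t-s\le 1$ (the behaviour for larger times being contained in Theorem \ref{cucu} applied after time $s+1$), so throughout Step~2 I may work on the strip $(s,s+1]\times\Rd$.

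\medskip
\noindent\textbf{Step 1: differential inequality.}
I compute $D_tw_\tau-\A w_\tau$ following the same pattern as in the proof of Theorem \ref{cucu}: the chain rule produces a $w_\tau^{1-2/p}$ times a quadratic form in $D^j_xu$ ($0\le j\le k+1$), plus the usual $p(p-2)w_\tau^{1-4/p}\langle Q\xi,\xi\rangle$ term, whose sign is handled in the two cases $p\in(1,2]$ and $p\ge 2$ exactly as before. The only two novelties with respect to Theorem \ref{cucu} are: (i) the time-derivative of the weight produces an extra, unweighted term $|D^k_xu|^2$, and (ii) the highest ellipticity contribution is $-p(p-1)(t-s)\,\nu\,|D^{k+1}_xu|^2$ rather than $-p(p-1)\nu|D^{k+1}_xu|^2$. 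All cross terms involving $D^{k+1}_xu$ (coming from the first-order spatial derivatives of the coefficients $q_{ij}$, $b_i$, and from differentiating $(t-s)|D^k_xu|^2$) are split by Young's inequality with a free parameter $\varepsilon$; the quadratic-in-$D^{k+1}_xu$ piece is absorbed using the $(t-s)$-weighted ellipticity provided $\varepsilon$ is chosen proportional to $(t-s)$, e.g.\ $\varepsilon=\kappa(t-s)$ for a sufficiently small $\kappa>0$.

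\medskip
\noindent\textbf{Step 2: bookkeeping.}
The point is that, once the $|D^{k+1}_xu|^2$ terms are absorbed, the residual coefficient in front of each $|D^j_xu|^2$, $j\le k$, must be bounded from above \emph{uniformly in $t\in(s,s+1]$}. This is exactly where the upgrade of the Hypotheses~3.1($k$)(ii) quantities $C_1,C_2$ from locally bounded functions to constants is needed: it guarantees that $\nu,\nu^{\gamma},r_0,r,\varrho$ enter the bookkeeping through time-independent bounds. The choice $\varepsilon\propto(t-s)$ produces terms of order $(t-s)^{-1}|D^k_xu|^2$ on the right-hand side, which however appear with the prefactor $(t-s)$ coming from the weighted term, yielding a bounded contribution to the coefficient of $|D^k_xu|^2$. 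After all the standard Cauchy--Schwarz and Young manipulations of Theorem \ref{cucu} and of Bertoldi--Lorenzi's autonomous arguments, one obtains a differential inequality of the form
\begin{align*}
D_tw_\tau-\A w_\tau\le Kw_\tau+K'\tau^{p/2}\qquad\text{in }(s,s+1]\times\Rd,
\end{align*}
for explicit constants $K=K(p,k)$ and $K'=K'(p,k)$ depending only on the structural data.

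\medskip
\noindent\textbf{Step 3: maximum principle, $\tau\to 0^+$, and approximation of $f$.}
Set $z_\tau(t,x)=e^{-K(t-s)}\bigl(w_\tau(t,x)-K'\tau^{p/2}(t-s)\bigr)$; it satisfies $D_tz_\tau\le \A z_\tau$ with initial datum $(\sum_{j=0}^{k-1}|D^jf|^2+\tau)^{p/2}$. Hypothesis~\ref{hyp-ultima}, via the variant of the maximum principle recalled in \cite{AngLor10Com,KunLorLun09Non}, yields
\begin{align*}
w_\tau(t,x)\le e^{K(t-s)}\,G(t,s)\bigg(\sum_{j=0}^{k-1}|D^jf|^2+\tau\bigg)^{\hs 3 p/2}+K'\tau^{p/2}(t-s).
\end{align*}
Passing to the limit $\tau\to0^+$ and using $((t-s)|D^k_xu|^2)^{p/2}\le w_0(t,x)$ gives \eqref{aaaa} for $h=k-1$ with $\Gamma^{(2)}_{p,k-1,k}(r)=C_{p,k}\,e^{Kr}r^{-p/2}$ on $(0,1]$, which displays exactly the announced behaviour as $r\to 0^+$. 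Finally, the smoothness assumption on $f$ is removed by approximating a general $f\in C_b(\Rd)$ by a bounded sequence $(f_n)\subset C^{k+1}_b(\Rd)$ converging locally uniformly to $f$: by Lemma \ref{lemma-R-eps}(ii) $G(t,s)f_n\to G(t,s)f$ locally uniformly, interior Schauder estimates (cf.\ the proof of Lemma \ref{lem-bonaventura}) transfer this convergence up to the spatial derivatives of order $k$ on the left-hand side, and dominated convergence handles the right-hand side. The general $h<k-1$ case then follows from the cases $h=k$ (Theorem \ref{cucu}) and $h=k-1$ via the evolution law $G(t,s)=G(t,(t+s)/2)G((t+s)/2,s)$, giving $\Gamma^{(2)}_{p,h,k}(r)\sim c_{p,k}'r^{-(k-h)p/2}$ as $r\to 0^+$ exactly as announced before the theorem.

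\medskip
\noindent The chief technical obstacle is Step~2: threading the single free parameter $\varepsilon\propto(t-s)$ (together with the Young parameters $\varepsilon_0,\varepsilon_1,\varepsilon_2$ of Theorem \ref{cucu}) through the whole bookkeeping so that every coefficient on the right-hand side stays bounded as $t\downarrow s$, despite the fact that the ellipticity damping of the top-order term vanishes in that limit. That this is possible at all is essentially the miracle exploited in \cite{BerLor05} in the autonomous setting; here one checks carefully that the nonautonomous, and in particular the time-dependent, structure of $\A$ enters only through constants made uniform in $t$ by the strengthened Hypotheses~4.1($k$).
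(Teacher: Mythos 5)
Your route is genuinely different from the one in the paper. You propose the classical time\--weighted Bernstein function $w_\tau=(\sum_{j\le k-1}|D^j_xu|^2+(t-s)|D^k_xu|^2+\tau)^{p/2}$ together with a pointwise differential inequality and the maximum principle, as in the autonomous case of \cite{BerLor05}. The paper instead works with an \emph{unweighted} auxiliary function $\psi_m=(\alpha|u_m|^2+\beta\vartheta_m^2|\nabla_xu_m|^2+\vartheta_m^4|D^2_xu_m|^2+\delta)^{p/2}$ built with cut-off functions and a hierarchy of large constants $\alpha>\beta>1$, differentiates $\tau\mapsto G_m(t,\tau)(\psi_m(\tau,\cdot)-\delta^{p/2})$, integrates in $\tau$, and then couples the resulting integral bound with the already-proved estimate \eqref{aaaa} for $h=k$ through Young's and H\"older's inequalities; the singularity $(t-s)^{-p/2}$ emerges there from the factor $(1-e^{-\sigma_{3,p}(t-s)})/\sigma_{3,p}$ appearing on the left-hand side after integration, not from a weight inside the Bernstein function. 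Your scheme, if completed, is more direct and avoids both the localization and the duality step; it is a legitimate alternative.

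There is, however, a genuine gap in your Step 2 at exactly the decisive point. For $j\le k-1$ it is indeed enough that the coefficient of $|D^j_xu|^2$ in the differential inequality be bounded above, since $|D^j_xu|^2\le w_\tau^{2/p}$. But $|D^k_xu|^2$ enters $w_\tau$ only with the degenerating weight $(t-s)$, so a bounded \emph{positive} coefficient $\Theta$ in front of $|D^k_xu|^2$ only gives $\Theta|D^k_xu|^2\le\Theta(t-s)^{-1}w_\tau^{2/p}$, i.e., a differential inequality $D_tw_\tau-\A w_\tau\le K(t-s)^{-1}w_\tau+\ldots$, whose Gronwall factor $\exp(\int_s^tK(\sigma-s)^{-1}d\sigma)$ is infinite. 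Hence ``a bounded contribution to the coefficient of $|D^k_xu|^2$'', which is what you claim to achieve, is not sufficient: that coefficient must be made \emph{nonpositive}. This is possible, but it requires (i) a small free parameter $a$ in the weight, i.e., $a(t-s)|D^k_xu|^2$, because already the term $+a|D^k_xu|^2$ produced by the time derivative of the weight must be absorbed into the unweighted ellipticity term $-(p-1)\nu|D^k_xu|^2$ generated by the level-$(k-1)$ block of $w_\tau$, which forces $a$ to be small compared with $(p-1)\nu_0$; and (ii) the observation that every other positive contribution to this coefficient carries a factor $a$ or a Young parameter and is of size $O(\nu^{\gamma})=O(\nu_0^{\gamma-1}\nu)$, so that it too can be absorbed into $-(p-1)\nu$ once $a$ and the Young parameters are chosen small. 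With these corrections your plan closes; without them the inequality $D_tw_\tau-\A w_\tau\le Kw_\tau+K'\tau^{p/2}$ is not justified. A minor further point: for $h=k-1\ge1$ the final approximation should be of $f\in C^{k-1}_b(\Rd)$, not of $f\in C_b(\Rd)$, since the right-hand side of \eqref{aaaa} involves $D^jf$ up to order $k-1$.
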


\begin{proof} We limit ourselves to proving estimate \eqref{aaaa} when $k=3$, considering first the case $p\in (1,2]$.
Without loss of generality we can assume that $c_0\le 0$. Indeed, if this is not the case, it suffices
to replace the evolution operator $G(t,s)$ with the evolution operator $e^{-c_0(t-s)}G(t,s)$.
We also assume that $c$ is bounded, since if $c$ is unbounded then it can be approximated by
a sequence of smooth functions which satisfy Hypothesis 4.1(3) (see the proof of Theorem \ref{cucu} for further details).

To simplify the notation, throughout the remaining of the proof, we set $u_m=G_m(\cdot,s)f$ and
$u=G(\cdot,s)f$, where, as usual, $G_m(t,s)$ is the evolution operator associated in $C(\overline B_m)$
with the operator $\A$ with homogeneous Dirichlet boundary conditions, and $f\in C^2_b(\Rd)$. Moreover, for any $m\in\N$, we set $\vartheta_m(x)=\vartheta(m^{-1}|x|)$, where
$\vartheta$ is a smooth cut-off function such that $\chi_{[0,1/2]}\le\vartheta\le\chi_{[0,1]}$. Finally, all the integrals
that we consider are to be understood pointwise, i.e., given $g\in C([a,b]\times\Rd)$, by $\int_a^bg(s,\cdot)ds$
we mean the function $x\mapsto\int_a^bg(s,x)ds$ for any $x\in\Rd$.

For any $\alpha,\beta>0$, $t>s$, $\delta\in (0,1)$, $m\in\N$,  and $f\in C^2_b(\R^d)$
we define the function $g_{\delta}:[s,t]\to C(\overline{B_m})$ by setting
$g_{\delta}(\tau,\cdot)=G_m(t,\tau)(\psi_m(\tau,\cdot)-\delta^{p/2})$ for any $\tau\in [s,t]$, where
$\psi_m=(\alpha|u_m|^2+\beta\vartheta^2_m|\nabla_xu_m|^2+\vartheta_m^4|D^2_xu_m|^2+\delta)^{p/2}$.
Since $\psi_m-\delta$ vanishes on $\partial B_m$, taking \cite[Theorem 2.3(ix)]{acqui} into account, we can show that the function $g_{\delta}$
is differentiable in $(s,t)$ and
$g'_{\delta}=G_m(t,\cdot)(D_{\tau}\psi_m-\A(\tau)\psi_m+c(\tau,\cdot)\delta^{p/2})$, where
\begin{align*}
D_{\tau}\psi_m-\A\psi_m=&p\psi_m^{1-\frac{2}{p}}\sum_{i=1}^3{\mathscr J}_i(u)+
\frac{p(2\!-\!p)}{4}\langle Q\xi_{u_m},\xi_{u_m}\rangle\psi_m^{1-\frac{4}{p}}\\
&+(p-1)c\psi_m-p\delta c\psi_m^{1-\frac{2}{p}},
\end{align*}
\begin{align*}
&{\mathscr J}_1(u_m)=-\alpha {\mathscr Q}_0(u_m)-\beta\vartheta_m^2\si{\mathscr Q}_0(D_iu_m)-\vartheta_m^4\sij{\mathscr Q}_0(D_{ij}u_m)\\
&\phantom{{\mathscr J}_1(u_m)=}
+\beta\vartheta_m^2{\mathscr B}_1(u_m)+2\vartheta_m^4\si {\mathscr B}_1(D_iu_m),\\[1mm]
&{\mathscr J}_2(u_m)=
\beta\vartheta_m^2{\mathscr Q}_1(u_m)+2\vartheta_m^4\si{\mathscr Q}_1(D_iu_m)+\vartheta_m^4{\mathscr Q}_2(u_m)\\
&\phantom{{\mathscr J}_2(u_m)=}
-4\beta\vartheta_m {\mathscr Q}_3(u_m)-8\vartheta_m^3\si {\mathscr Q}_3(D_iu_m)+\vartheta_m^4{\mathscr B}_2(u_m),\\[1mm]
&{\mathscr J}_3(u_m)=\beta\vartheta_m^2{\mathscr C}_1(u_m)+2\vartheta_m^4\si{\mathscr C}_1(D_iu_m)
+\vartheta_m^4{\mathscr C}_2(u_m),\\[1mm]
&{\mathscr J}_4(u_m)=-\frac{\beta}{2}(\mathcal{A}_0\vartheta^2_m)|\nabla_xu_m|^2
-\frac{1}{2}(\mathcal{A}_0\vartheta_m^4)|D^2_xu_m|^2,
\end{align*}
$\xi_{u_m}=\nabla_x(\alpha |u_m|^2+\beta\vartheta_m^2|\nabla_xu_m|^2+\vartheta_m^4|D^2_xu_m|^2)$, $\A_0=(\A-c)$, ${\mathscr Q}_i$ ($i=0,1,2$), ${\mathscr B}_j$ ($j=1,2$) are defined
at the beginning of the proof of Theorem \ref{cucu} and
${\mathscr Q}_3(\zeta)=\langle Q\nabla\vartheta_m,D^2\zeta\nabla\zeta\rangle$
for any smooth enough function $\zeta$.

Arguing as in the proof of Theorem \ref{cucu} and using the inequality
$(a+b)^2\le (1+\ve)a^2+(1+\ve^{-1})b^2$, which holds true for any $a,b,\ve>0$, we deduce that
\begin{align*}
&\langle Q\xi_{u_m},\xi_{u_m}\rangle\\
\le &(1+\ve)\bigg [\alpha|u_m|({\mathscr Q}_0(u_m))^{\frac{1}{2}}
+\beta\vartheta_m^2|\nabla_xu_m|\bigg (\si{\mathscr Q}_0(D_iu_m)\bigg )^{\frac{1}{2}}\\
&\hs{5}\qq\qq\ds+\vartheta_m^4|D^2_xu_m|\bigg (\sij {\mathscr Q}_0(D_{ij}u_m)\bigg )^{\frac{1}{2}}
\bigg ]^2\\
&+\frac{1+\ve}{\ve}(\beta\vartheta_m|\nabla_xu_m|^2
+2\vartheta_m^3|D^2_xu_m|^2)^2{\mathscr Q}_0(\vartheta_m)\\
\le& (1+\ve)\bigg [\alpha {\mathscr Q}_0(u_m)+\beta\vartheta_m^2
\si {\mathscr Q}_0(D_iu_m)+\vartheta_m^4\sij {\mathscr Q}_0(D_{ij}u_m)\bigg ]\psi_m^{\frac{2}{p}}\\
&\hs{7}\;\ds+\frac{1+\ve}{\ve}(\beta |\nabla_xu_m|^2+
4\vartheta_m^2|D^2_xu_m|^2)(\beta\vartheta_m^2 |\nabla_xu_m|^2+\vartheta_m^4|D^2_xu_m|^2){\mathscr Q}_0(\vartheta_m).
\end{align*}
Further, taking  Hypothesis 4.1(3)(ii) and the choice of $\vartheta_m$ into account it can be easily checked that
\begin{align*}
[{\mathscr Q}_0(\vartheta_m)](t,x)=&|\vartheta'(m^{-1}|x|)|^2m^{-2}|x|^{-2}\langle Q(t,x)x,x\rangle\notag\\
\le &C_1\|\vartheta'\|_{\infty}^2(1+|x|^2)|x|^{-1}m^{-2}\nu(t,x)\notag\\
\le &4\|\vartheta'\|_{\infty}^2C_1m^{-3}(1+m^2)\nu(t,x)=:\frac{K_1}{m}\nu(t,x),
\end{align*}
for any $(t,x)\in I\times\Rd$ and
$\mathcal{A}_0(\vartheta^2_m) \geq -K_2\nu$,
\begin{align*}
&\mathcal{A}_0(\vartheta^4_m)= 2 \vartheta_m^2 \mathcal{A}_0(\vartheta^2_m) +2{\mathscr Q}_0(\vartheta_m^2)
\geq 2 \vartheta_m^2 \mathcal{A}_0(\vartheta^2_m)\geq -2K_2\vartheta_m^2\nu,\\
&|\vartheta_m {\mathscr Q}_3(u_m)|\leq\frac{K_3}{4\ve}\nu|\nabla_xu_m|^2+
\ve K_3\vartheta_m^2\nu |D^2_xu_m|^2,\\
&\bigg |\vartheta^3_m\si {\mathscr Q}_3(D_iu_m)\Bigg |
\leq \frac{K_3}{4\ve}\vartheta_m^2\nu|D^2_xu_m|^2
+\ve K_3\vartheta_m^4\nu|D^3_xu_m|^2,
\end{align*}
in $I\times\Rd$, for any $\ve>0$, where $K_2$ and $K_3$ are positive constants independent of $u$ and $\ve$.

Taking all the previous estimates, the fact that $c$ is bounded and nonpositive and \eqref{estim-Q1}, \eqref{estim-Q2} and \eqref{estim-B2}  into account, we conclude that
\begin{align*}
D_{\tau}\psi_m-\A\psi_m
\le\bigg\{&\frac{\beta+d}{4\ve}u_m^2+
{\mathscr K}_{1,\ve,p}|\nabla_xu_m|^2+{\mathscr K}_{2,\ve,p}\vartheta_m^2|D^2_xu_m|^2\\
&+\{[(2\!-\!p)(1\!+\!\ve)\!-\!1\!+\!8\ve K_3]\nu-2C\ve d\nu^{\gamma}\}\vartheta_m^4|D^3_xu_m|^2\bigg\}\psi_m^{1-\frac{2}{p}}\\
&+p\|c\|_{\infty}\delta\psi_m^{1-\frac{2}{p}},
\end{align*}
where
\begin{align*}
{\mathscr K}_{1,\ve,p}=&\bigg [[(2-p)(1+\ve)-1]\alpha+\frac{\beta K_2}{2}
+\frac{\beta K_3}{\ve}+(2-p)\frac{\beta K_1}{m}\frac{1+\ve}{\ve}\bigg ]\nu\\
&+\beta \frac{Cd^2}{4\ve}\nu^{\gamma}+\beta\vartheta_m^2\bigg (r_0+r\frac{d^2}{4\beta\ve}+d\varepsilon\rho^2\bigg )
+\frac{1}{2\ve},\\[1mm]
{\mathscr K}_{2,\ve,p}=&\bigg [[(2-p)(1+\ve)-1+4\ve K_3]\beta+K_2+\frac{2K_3}{\ve}
+4(2-p)\frac{K_1}{m}\bigg (1+\frac{1}{\ve}\bigg )\bigg ]\nu\\
&+\bigg (\beta Cd\ve +\frac{Cd^2}{2\ve}+K\bigg )\nu^{\gamma}+\vartheta_m^2(2r_0+r\ve d+3d\ve\rho^2).
\end{align*}
Thanks to Hypothesis 3.1(3)(iv) (where, now, $L$ and $M$ are constants), we can fix $\ve$ sufficiently small
such that $(2-p)(1+\ve)-1+8\ve K_3<0$ and $2r_0+r\ve d+3d\ve\rho^2<2M\nu^{\gamma}$ in $I\times\Rd$.
Next, we fix $\beta>1$ such that the supremum over $I\times\Rd$ of ${\mathscr K}_{2,\ve,p}$
is negative and
$r_0+rd^2(4\beta\ve)^{-1}+d\varepsilon\rho^2\le M\nu^{\gamma}$
in $I\times\Rd$. Finally, we choose $\alpha>\beta$ large enough to
make negative the supremum over $I\times\Rd$ of ${\mathscr K}_{1,\ve,p}$.
As a byproduct, taking into account that we are assuming that $c$ is bounded,
we can determine two positive constants $K_{1,p}$ and $K_{2,p}$ such that
\begin{align*}
D_{\tau}\psi_m-\A\psi_m
\le& -K_{1,p}(|\nabla_xu_m|^2+|D^2_xu_m|^2+|D^3_xu_m|^2)\vartheta_m^4\psi_m^{1-\frac{2}{p}}\\
&+K_{2,p}u_m^2\psi_m^{1-\frac{2}{p}}+p\|c\|_{\infty}\delta\psi_m^{1-\frac{2}{p}}
\end{align*}
and, thus,
\begin{align*}
g'_{\delta}\le &-K_{1,p}G_m(t,\cdot)[(|\nabla_xu_m|^2+|D^2_xu_m|^2+|D^3_xu_m|^2)\vartheta_m^4\psi_m^{1-\frac{2}{p}}]\\
&+K_{2,p}G_m(t,\cdot)\psi_m+p\|c\|_{\infty}\delta G_m(t,\cdot)\psi_m^{1-\frac{2}{p}}.
\end{align*}
Integrating this inequality over $[s+\ve,t-\ve]$ (for $\ve\in (0,(t-s)/2)$), pointwise in $\Rd$, and observing that $G_m(t,\cdot)\psi_m^{1-2/p}\le \delta^{p/2-1}G(t,\cdot)\mathds{1}
\le \delta^{p/2-1}$ yield
\begin{align}
&K_{1,p}\int_{s+\ve}^{t-\ve}G_m(t,\tau)[(|\nabla_xu_m(\tau,\cdot)|^2+|D^2_xu_m(\tau,\cdot)|^2+|D^3_xu_m(\tau,\cdot)|^2)\vartheta_m^4\psi_m^{1-\frac{2}{p}}]d\tau\notag\\
\le & G_m(t,s+\ve)\psi_m(s+\ve,\cdot)+K_{2,p}\int_{s+\ve}^{t-\ve}G_m(t,\tau)\psi_m(\tau,\cdot)d\tau+p\|c\|_{\infty}\delta^{\frac{p}{2}}(t-s).\notag\\
\label{CC}
\end{align}
\vskip -4truemm

Next, using Lemma \ref{lemma-R-eps} and the dominated convergence theorem, we let first $m$ tend to $+\infty$ and then $\ve$ tend to $0^+$ to get\footnote{We stress that the proof of the uniform
estimates in Theorem \ref{C0-C3}, given in \cite{Lorenzi-1}, shows that the function in square brackets in \eqref{CC} can be estimated from above by a constant, independent of $m$, times $(\tau-s)^{-1/2}$.}
\begin{align*}
&K_{1,p}\int_s^tG(t,\tau)[(|\nabla_xu(\tau,\cdot)|^2+|D^2_xu(\tau,\cdot)|^2+|D^3_xu(\tau,\cdot)|^2)\psi^{1-\frac{2}{p}}]d\tau\\
\le & G(t,s)\psi(s,\cdot)+K_{2,p}\int_s^tG(t,\tau)\psi(\tau,\cdot)d\tau+p\|c\|_{\infty}\delta^{\frac{p}{2}}(t-s),
\end{align*}
where $\psi$ is defined as $\psi_m$, with $u_m$ being replaced by $u$.
Now, using estimate \eqref{aaaa} with $h=k=3$, splitting $u(t,\cdot)=G(t,\tau)u(\tau,\cdot)$,
from Young and H\"older inequalities, \eqref{fg} and \eqref{fp}, which shows that $G(t,\tau)|u(\tau,\cdot)|^p\le G(t,s)|f|^p$, we deduce that
\begin{align}
&e^{-\s_{3,p}(t-\tau)}(|\nabla_xu(t,\cdot)|^2+|D^2_xu(t,\cdot)|^2+|D^3_xu(t,\cdot)|^2)^{\frac{p}{2}}\notag\\
\le &G(t,\tau)\Big [(|\nabla_xu(\tau,\cdot)|^2+|D^2_xu(\tau,\cdot)|^2+|D^3_xu(\tau,\cdot)|^2)^{\frac{p}{2}}\Big ]+G(t,\tau)|u(\tau,\cdot)|^p\notag\\
\le & G(t,\tau)\left [(|\nabla_xu(\tau,\cdot)|^2+|D^2_xu(\tau,\cdot)|^2+|D^3_xu(\tau,\cdot)|^2)^{\frac{p}{2}}\psi^{\frac{p}{2}-1}\psi^{1-\frac{p}{2}}\right ]
+G(t,s)|f|^p\notag\\
\le& \Big\{G(t,\tau)\Big [(|\nabla_xu(\tau,\cdot)|^2\!+\!|D^2_xu(\tau,\cdot)|^2\!+\!|D^3_xu(\tau,\cdot)|^2)\psi^{1-\frac{2}{p}}\Big ]\Big\}^{\frac{p}{2}}(G(t,\tau)\psi)^{1-\frac{p}{2}}\notag\\
&+G(t,s)|f|^p\notag\\
\le &\frac{p}{2}\varepsilon^{\frac{2}{p}}G(t,\tau)\Big [(|\nabla_xu(\tau,\cdot)|^2\!+\!|D^2_xu(\tau,\cdot)|^2\!+\!|D^3_xu(\tau,\cdot)|^2)\psi^{1-\frac{2}{p}}\Big ]\notag\\
&+\frac{2-p}{2}\varepsilon^{\frac{2}{p-2}}G(t,\tau)\psi(\tau,\cdot)+G(t,s)|f|^p\notag\\
\label{AA}
\end{align}
\vskip -3truemm
\noindent
for any $\ve>0$. From now on, we assume that both the constants $\sigma_{2,p}$ and $\sigma_{3,p}$ do not vanish.

Since $\alpha>\beta>1$, arguing as in the proof of Theorem \ref{cucu} it can be shown that
\begin{align}
G(t,\tau)\psi(\tau,\cdot)\le &G(t,\tau)\{e^{\sigma_{2,p}(\tau-s)}\alpha^{\frac{p}{2}}G(\tau,s)[(f^2+|\nabla f|^2+|D^2f|^2+\delta)^{\frac{p}{2}}]\}\notag\\
=&\alpha^{\frac{p}{2}}e^{\sigma_{2,p}(\tau-s)}G(t,s)[(f^2+|\nabla f|^2+|D^2f|^2+\delta)^{\frac{p}{2}}]
\label{AB}
\end{align}
for any $\tau\in [s,t]$ and, consequently,
\begin{align}
\int_s^tG(t,\tau)\psi(\tau,\cdot)d\tau\le \alpha^{\frac{p}{2}}\frac{e^{\sigma_{2,p}(t-s)}-1}{\sigma_{2,p}}G(t,s)[(f^2+|\nabla f|^2+|D^2f|^2+\delta)^{\frac{p}{2}}].
\label{AC}
\end{align}
From \eqref{AA} and \eqref{AB} we get
\begin{align*}
&e^{-\s_{3,p}(t-\tau)}(|\nabla_xu(t,\cdot)|^2+|D^2_xu(t,\cdot)|^2+|D^3_xu(t,\cdot)|^2)^{\frac{p}{2}}\\
\le &\frac{p}{2}\varepsilon^{\frac{2}{p}}G(t,\tau)\Big [(|\nabla_xu(\tau,\cdot)|^2\!+\!|D^2_xu(\tau,\cdot)|^2\!+\!|D^3_xu(\tau,\cdot)|^2)\psi^{1-\frac{2}{p}}\Big ]\\
&+\frac{2-p}{2}\varepsilon^{\frac{2}{p-2}}\alpha^{\frac{p}{2}}e^{\sigma_{2,p}(\tau-s)}G(t,s)[(f^2+|\nabla f|^2+|D^2f|^2+\delta)^{\frac{p}{2}}]
+G(t,s)|f|^p.
\end{align*}
Integrating this inequality in $(s,t)$ and taking \eqref{CC} and \eqref{AC} into account, we get
\begin{align}
&\frac{1-e^{-\s_{3,p}(t-s)}}{\s_{3,p}}(|\nabla_xG(t,s)f|^2+|D^2_xG(t,s)f|^2
+|D^3_xG(t,s)f|^2)^{\frac{p}{2}}\notag\\
\le &{\mathscr H}_{p,\ve}(t-s)G(t,s)[(f^2+|\nabla f|^2+|D^2f|^2+\delta)^{\frac{p}{2}}]\notag\\
&+\frac{p^2}{2K_{1,p}}\ve^{\frac{2}{p}}\|c\|_{\infty}\delta^{\frac{p}{2}}(t-s)
+(t-s)G(t,s)|f|^p+\frac{2-p}{2}\ve^{\frac{2}{p-2}}\alpha^{\frac{p}{2}}\delta^{\frac{p}{2}},
\label{stimaconde}
\end{align}
where
\begin{align*}
{\mathscr H}_{p,\ve}(r)=\frac{p}{2K_1}\ve^{\frac{2}{p}}\alpha^{\frac{p}{2}}\bigg (1+K_{2,p}\frac{e^{\sigma_{2,p}r}-1}{\sigma_{2,p}}\bigg )
+\frac{2-p}{2}\ve^{\frac{2}{p-2}}\alpha^{\frac{p}{2}}\frac{e^{\sigma_{2,p}r}-1}{\sigma_{2,p}}.
\end{align*}
Letting $\delta$ tend to $0^+$ in (\ref{stimaconde}) and estimating $G(t,s)|f|^p\le G(t,s)(f^2+|\nabla f|^2+|D^2f|^2)^{p/2}$ yield
\begin{align*}
&\frac{1-e^{-\s_{3,p}(t-s)}}{\s_{3,p}}\bigg (\sum_{h=1}^3|D^h_xG(t,s)f|^2\bigg )^{\frac{p}{2}}\\
\le &[{\mathscr H}_{p,\ve}(t-s)+(t-s)]G(t,s)\bigg (\sum_{h=0}^2|D^hf|^2\bigg )^{\frac{p}{2}}.
\end{align*}

Estimate \eqref{aaaa} follows, with
\begin{align*}
\Gamma_{p,2,3}(r)=\frac{\s_{p,3}}{1-e^{-\s_{p,3}r}}\bigg\{\bigg [\frac{\alpha}{K_1}\bigg (1+K_{p,2}\frac{e^{\sigma_{p,2}r}-1}{\sigma_{p,2}}\bigg )\bigg ]^{\frac{p}{2}}
\bigg (\frac{e^{\sigma_{p,2}r}-1}{\sigma_{p,2}}\bigg )^{1-\frac{p}{2}}+r\bigg\},
\end{align*}
by minimizing over $\ve>0$. The previous formula holds also in the case when at least one between $\sigma_{p,2}$ and $\sigma_{p,3}$ vanishes,
provided one replaces the ratio $(e^{\sigma r}-1)/\sigma$ by $r$.

To obtain \eqref{aaaa} for $p>2$ it suffices to write
$|D^3_xG(t,s)f|^p=(|D^3_xG(t,s)f|^2)^{\frac{p}{2}}$, apply \eqref{aaaa} with $p=2$ and then use \eqref{fp}.
\end{proof}

\begin{corollary}
Under the hypotheses of Theorem $\ref{coco}$, estimate \eqref{aaaa} holds true for any $h<k-1$.
\end{corollary}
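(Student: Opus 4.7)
The plan is to argue by induction on the quantity $k-h-1\ge 1$, using the semigroup property $G(t,s)=G(t,r)G(r,s)$ with $r=(s+t)/2$ to ``split'' the missing derivatives of the initial datum between an application of Theorem \ref{coco} and previously established cases of \eqref{aaaa}. The base case $k-h=1$ is Theorem \ref{coco} itself, and since we only need $k\in\{2,3\}$, the induction collapses to a small number of explicit iterations: $(k,h)=(2,0)$, $(k,h)=(3,1)$, and $(k,h)=(3,0)$.

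To carry out the inductive step, fix $h<k-1$, set $r=(s+t)/2$, and write
\begin{equation*}
|D^k_xG(t,s)f|^p=|D^k_xG(t,r)[G(r,s)f]|^p.
\end{equation*}
Applying Theorem \ref{coco} (i.e.\ \eqref{aaaa} with $h$ replaced by $k-1$) to the evolution operator $G(t,r)$ acting on $G(r,s)f$ yields
\begin{equation*}
|D^k_xG(t,s)f|^p\le \Gamma^{(2)}_{p,k-1,k}(t-r)\,G(t,r)\!\left(\sum_{i=0}^{k-1}|D^i_xG(r,s)f|^2\right)^{\!p/2}.
\end{equation*}
Using the elementary power-mean inequality $(a_0^2+\cdots+a_{k-1}^2)^{p/2}\le C_{k,p}\sum_{i=0}^{k-1}|a_i|^p$, it suffices to bound each term $|D^i_xG(r,s)f|^p$ in a form compatible with the desired right-hand side of \eqref{aaaa}. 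For $i$ with $h<i\le k-1$ the induction hypothesis (or, when $i=k-1$, Theorem \ref{coco} directly) provides
\begin{equation*}
|D^i_xG(r,s)f|^p\le \Gamma^{(2)}_{p,h,i}(r-s)\,G(r,s)\!\left(\sum_{j=0}^{h}|D^jf|^2\right)^{\!p/2}.
\end{equation*}
For the remaining indices $0\le i\le h$ one uses Theorem \ref{cucu} (the case $k=h=i$ of \eqref{aaaa}) together with the trivial monotonicity $\sum_{j=0}^i|D^jf|^2\le \sum_{j=0}^h|D^jf|^2$; the case $i=0$ is handled by \eqref{fp} (possible since we may assume $c_0\le 0$, absorbing $e^{c_0(t-s)}$ into the constants). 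In all cases the bound takes the form
\begin{equation*}
|D^i_xG(r,s)f|^p\le \Xi_{p,h,i}(r-s)\,G(r,s)\!\left(\sum_{j=0}^{h}|D^jf|^2\right)^{\!p/2}
\end{equation*}
for some continuous function $\Xi_{p,h,i}$.

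Summing over $i=0,\ldots,k-1$, applying $G(t,r)$ and invoking the evolution law $G(t,r)G(r,s)=G(t,s)$ produces
\begin{equation*}
|D^k_xG(t,s)f|^p\le \Gamma^{(2)}_{p,k-1,k}(t-r)\,C_{k,p}\!\sum_{i=0}^{k-1}\Xi_{p,h,i}(r-s)\cdot G(t,s)\!\left(\sum_{j=0}^{h}|D^jf|^2\right)^{\!p/2},
\end{equation*}
which is \eqref{aaaa} with the explicit choice
\begin{equation*}
\Gamma^{(2)}_{p,h,k}(t-s):=C_{k,p}\,\Gamma^{(2)}_{p,k-1,k}\!\left(\tfrac{t-s}{2}\right)\sum_{i=0}^{k-1}\Xi_{p,h,i}\!\left(\tfrac{t-s}{2}\right).
\end{equation*}
The only genuine subtlety is to check that every auxiliary bound $\Xi_{p,h,i}$ needed at level $(k,h)$ has already been established at a previous inductive stage, but this is immediate because each such $\Xi$ refers either to a strictly smaller value of $k$ or to the case $h=k$ already supplied by Theorem \ref{cucu}. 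The remaining bookkeeping of constants is routine.
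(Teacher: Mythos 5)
Your proof is correct and follows essentially the same route as the paper: both arguments split the time interval via the evolution law $G(t,s)=G(t,r)G(r,s)$ and compose the $h=k-1$ estimate of Theorem \ref{coco} with previously established cases of \eqref{aaaa}. The only difference is bookkeeping: the paper applies the estimates to the whole sum $\bigl(\sum_{j\le k-1}|D^j_xG(r,s)f|^2\bigr)^{p/2}$ at once (which the proofs of Theorems \ref{cucu} and \ref{coco} actually furnish), whereas you first separate the terms with a power-mean inequality and bound each one individually, which costs an extra constant but is equally valid.
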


\begin{proof}
The proof follows from applying repeatedly estimate \eqref{aaaa} with $h=k-1$. For the reader's convenience, we provide the proof in the case $k=3$.
For this purpose, we fix $t>s\in I$, $p\in (1,+\infty)$, set $t_1=(t+s)/2$ and observe that
\begin{align*}
|D^3_xG(t,s)f|^p=&|D^3_xG(t,t_1)G(t_1,s)f|^p\\
\le &\Gamma_{p,2,3}^{(2)}\bigg (\frac{t-s}{2}\bigg )
G(t,t_1)\bigg (\sum_{j=0}^2|D_x^jG(t_1,s)f|^2\bigg )^{\frac{p}{2}}\\
\le &\Gamma_{p,2,3}^{(2)}\bigg (\frac{t-s}{2}\bigg )\Gamma_{p,1,2}^{(2)}\bigg (\frac{t-s}{2}\bigg )
G(t,t_1)G(t_1,s)\bigg (\sum_{j=0}^1|D^jf|^2\bigg )^{\frac{p}{2}}\\
=&\Gamma_{p,3}^{(2)}\bigg (\frac{t-s}{2}\bigg )\Gamma_{p,1,2}^{(2)}\bigg (\frac{t-s}{2}\bigg )
G(t,s)\bigg (\sum_{j=0}^1|D^jf|^2\bigg )^{\frac{p}{2}}
\end{align*}
and \eqref{aaaa} follows with $h=1$, $k=3$ and $\Gamma^{(2)}_{p,1,3}(r)=\Gamma_{p,2,3}^{(2)}(r/2)\Gamma_{p,1,2}^{(2)}(r/2)$.

Finally, to prove \eqref{aaaa}, with $h=0$ and $k=3$, we fix $t>s\in I$, $p\in (1,+\infty)$, set $t_2=s+(t+s)/3$ and observe that
 \begin{align*}
|D^3_xG(t,s)f|^p=&|D^3_xG(t,t_2)G(t_2,s)f|^p\\
\le&\Gamma_{p,2,3}^{(2)}\bigg (\frac{t-s}{3}\bigg )
G(t,t_2)\bigg (\sum_{j=0}^2|D_x^jG(t_2,s)f|^2\bigg )^{\frac{p}{2}}\\
=&\Gamma_{p,2,3}^{(2)}\bigg (\frac{t-s}{3}\bigg )
G(t,t_2)\bigg (\sum_{j=0}^2|D_x^jG(t_2,t_1)G(t_1,s)f|^2\bigg )^{\frac{p}{2}}\\
=&\Gamma_{p,2,3}^{(2)}\bigg (\frac{t-s}{3}\bigg )\Gamma_{p,1,2}\bigg (\frac{t-s}{3}\bigg )
G(t,t_1)\bigg (\sum_{j=0}^1|D_x^jG(t_1,s)f|^2\bigg )^{\frac{p}{2}}\\
\le &\Gamma_{p,2,3}^{(1)}\bigg (\frac{t-s}{3}\bigg )\Gamma_{p,1,2}\bigg (\frac{t-s}{3}\bigg )
\Gamma_{p,0,1}\bigg (\frac{t-s}{3}\bigg )G(t,s)|f|^p
\end{align*}
and \eqref{aaaa} follows with $\Gamma^{(2)}_{p,0,3}(r)=\Gamma_{p,2,3}^{(1)}(r/3)\Gamma_{p,1,2}^{(2)}(r/3)\Gamma_{p,0,1}^{(2)}(r/3)$.
\end{proof}

\begin{remark}
Even if in this paper we confine ourselves to the study of the operator scalar operator
$\A$ with coefficients defined in the whole $I\times\Rd$, we mention that elliptic operators with unbounded
coefficients have been considered also in unbounded domains. In this case,
uniform and pointwise estimates for the derivatives of the solution of associated Cauchy problem, with Dirichlet, Neumann
and more general homogeneous boundary conditions, have been proved (see e.g., \cite{AngLorOnD,AngLorNon,BerFor04,BerForLor,BerForLor07,cerrai,HieLorRha,KryPri10Ell}).
Also in some situations where the elliptic operator \eqref{oper-A} is replaced by
a system of elliptic operators some gradient estimates are available (see e.g, \cite{AddAngLorTes,HieLorPruRha,LorDel11}).
\end{remark}

\section{Some consequences of the pointwise estimates of Section \ref{sect-4}}
\label{sect-5}
If not otherwise specified, throughout this section we assume that $c\equiv 0$.

The following example, due to J. Pr\"uss, A. Rhandi and R. Schnaubelt,
 shows that the $L^p$-spaces related to the Lebesgue measure do not represent a \emph{good setting}
where to study the evolution operator $G(t,s)$.

\begin{example}[Section 2 of \cite{pruss-rhandi-schnaubelt}]
\label{ex-PRS}
{\rm Let $\A$ be the one-dimensional elliptic operator
defined by $(\A\psi)(x)=\psi''(x)-x|x|^{\varepsilon}\psi'(x)$ for any $x\in\R$
and smooth enough functions $\psi:\R\to\R$. Let us show that for any $p\in [1,+\infty)$, $\lambda>0$ and any nonnegative and not identically
vanishing functions $f\in C^{\infty}_c(\R)$ the equation $\lambda u-\A u=f$ does not admit solutions in $L^p(\R)$.
Indeed, fix $p$, $\lambda$ and $f$ as above and, by contradiction let assume that the equation $\lambda u-\A u=f$ admits a solution $u\in L^p(\R)$. By elliptic regularity $u$ belongs to $C^2(\R)$. Let us prove that $u$ is bounded in $\R$. For this purpose,
we take advantage of the one-dimensional Feller theory (see \cite{feller}) as in Remark \ref{rem-2.1}(iii). In this case, the function
$Q$ and $R$ are given by
\begin{eqnarray*}
Q(x)=e^{-\frac{|x|^{\ve+2}}{\ve+2}}\int_0^xe^{\frac{|t|^{\ve+2}}{\ve+2}}dt,\qquad\;\,
R(x)=e^{\frac{|x|^{\ve+2}}{\ve+2}}\int_0^xe^{-\frac{|t|^{\ve+2}}{\ve+2}}dt,\q\;\, x\in\R.
\end{eqnarray*}
A straightforward computation reveals that $\lim_{x\to\pm\infty}x^{1+\ve/2}Q(x)=0$. Hence, $Q\in L^1(\R)$. On the other hand, the function $R$
does not belong to $L^1(\R)$. This implies that the equation
$\lambda u-\A u=0$ admits a decreasing solution $u_1$ which tends to $1$ at $+\infty$ and an increasing solution $u_2$ which tends to $1$ at $-\infty$ (see \cite[Chapter 2]{LorLibro-2}).
Clearly, $u_1$ and $u_2$ are linearly independent. Moreover $u_1$ and $u_2$ diverge to $+\infty$ at $-\infty$ and at $+\infty$, respectively.
Since the equation $\lambda u-\A u=f$ admits a bounded solution $u_b\in C_b(\R)\cap C^2(\R)$ and any other solution is given by
$c_1u_1+c_2u_2+u_b$ for some real constants $c_1$ and $c_2$, if $(c_1,c_2)\neq (0,0)$ then the function
$u_b+c_1u_1+c_2u_2$ does not belong to $L^p(\R)$. Consequently $u=u_b$, i.e., $u$ is bounded and positive since $u_b$ is.

We now introduce the functions $V:\R\to\R$ and $W:\R\setminus\{0\}\to\R$, defined by $V(x)=x^2$ and $W(x)=\ve(2\lambda)^{-1}+|x|^{-\ve}$
for $x\in\R$ and $x\neq 0$, respectively, which satisfy the differential inequalities $\lambda V-\A V\ge 0$ and $\lambda W-\A W\le 0$ in $\R\setminus (-r,r)$ if $r$ is properly chosen.
Since $u$ is bounded and positive, we can fix $\beta>0$ such that $u(\pm r)\ge\beta W(\pm r)$.
For any $\delta>0$ the function $w=u-\beta W+\delta V$ satisfies the differential inequality
$\lambda w-\A w\ge 0$ in $\R\setminus (-r,r)$ and diverges to $+\infty$ as $x$ tends to $\pm\infty$.
Hence, it admits a nonnegative minimum in $\R\setminus (-r,r)$. Thus,
$u-\beta W+\delta V\ge 0$ in $\R\setminus (-r,r)$ for any $\delta>0$ and letting $\delta$ tend to $0^+$ yields $u\ge\beta W$. Since $W\notin L^p(\R)$
the function $u$ does not belong to $L^p(\R)$ as well and the contradiction follows.
}\end{example}

\begin{remark}\label{lppres}
Sufficient conditions for the evolution $G(t,s)$ to preserve $L^p(\Rd)$ are obtained in \cite{AngLor10Com} and, in the case when $c\equiv 0$ (as we are assuming in this section), they require, besides
Hypothesis \ref{hyp-1}(i)-(ii), that
one of the following set of conditions is satisfied:
\begin{enumerate}[\rm (a)]
\item
the coefficients $q_{ij}$ and $b_j$ ($i,j=1,\ldots,d$) are differentiable in $I\times\Rd$ with respect to the spatial variables, the weak derivatives $D_{ij}q_{ij}$ exist in $I\times\Rd$
for any $i,j$ as above and the function $\beta:I\times\Rd\to\Rd$ defined by
$\beta_i=b_i-\sj D_{ij}q_{ij}$ for any $i=1,\ldots,d$ is such that ${\rm div}_x\beta \ge- K_0$ in $I\times\Rd$ for some positive constant $K_0$;
\item
the coefficients $q_{ij}$ are differentiable in $I\times\Rd$ with respect to the spatial variables, the function $\nu(t,\cdot)$ in Hypothesis \ref{hyp-1}(i) is measurable for any $t\in I$,
and the function $|\beta|^2$ is controlled from above by a constant $K_1$ times the function $\nu$.
\end{enumerate}
\end{remark}
In the first case, $G(t,s)$ preserves $L^p(\Rd)$ for any $t>s\in I$ and $p\in [1,+\infty)$, whereas
the second set of conditions guarantee that $L^p(\Rd)$ is preserved by the action of the evolution operator if $p\in (1,+\infty)$.

On the other hand, the evolution operator $G(t,s)$ enjoys good properties in the $L^p$-spaces related
to the \emph{tight}$^(\footnote{A set of Borel measures $\{\mu_t:\;t\in I\}$ in $\Rd$ is tight if for every $\varepsilon >0$ there exists $\rho>0$ such that $\mu_t(\Rd\setminus B_\rho)\leq \varepsilon$, for every $t\in I$. }^)$ evolution system of measures. The existence of such measures can be proved under the following set of hypotheses, which we assume throughout this section, if not otherwise mentioned.

\begin{hypothesis}
\label{hyp-2}
Hypotheses $\ref{hyp-1}(i)$-$(ii)$ are satisfied and there exists a nonnegative function $\varphi \in C^2(\Rd)$, which blows up as $|x|\to+\infty$, such that
$\A\varphi\le a_1 -a_2\varphi$ in $[t_0,+\infty)\times \Rd$ for some positive constants $a_1,a_2$ and $t_0\in I$.
\end{hypothesis}

Under Hypothesis \ref{hyp-2} in \cite{KunLorLun09Non} it has been proved that there exists a tight evolution system of measures $\{\mu_t: t\in I\}$ associated to the evolution operator $G(t,s)$.
The invariance property \eqref{invariance} and formula \eqref{fp} show that
\begin{eqnarray*}
\int_{\Rd}|G(t,s)f|^pd\mu_t\le\int_{\Rd}G(t,s)|f|^pd\mu_t=\int_{\Rd}|f|^pd\mu_s
\end{eqnarray*}
for any $f\in C_b(\Rd)$, $p\in (1,+\infty)$ and $I\ni s<t$. The density of $C^{\infty}_b(\Rd)$ (and, hence, $C_b(\Rd)$) in
$L^p(\Rd,\mu_s)$, allows us to extend each operator $G(t,s)$ with a contraction from $L^p(\Rd,\mu_s)$ into $L^p(\Rd,\mu_t)$.

\begin{remark}
Note that the operator considered in Example \ref{ex-PRS} satisfies the previous hypothesis.
Indeed, if one take as $\varphi$ the function defined by $\varphi(x)=x^2$ for any $x\in\R$, one easily
realizes that $\A\varphi(x)=2-2x^{2+\ve}$ for any $x\in\R$ and it is easy to check that
there exist two positive constants $a_1$ and $a_2$ such that $\A\varphi\le a_1-a_2\varphi$ in $\R$.
Hence, Hypothesis \ref{hyp-2} are weaker than those in \cite{AngLor10Com}, which guarantee that $G(t,s)$ preserves $L^p(\Rd)$.
\end{remark}

In this section, we illustrate remarkable properties enjoyed by the evolution operator $G(t,s)$ in the $L^p$-spaces related to evolution systems of measures, which are consequences
of the pointwise estimates of the previous section. To begin with, we consider the following result.

\begin{proposition}
\label{prop-wilde}
Under the same hypotheses as in Theorems $\ref{cucu}$ and $\ref{coco}$ the following assertions hold true.
\begin{enumerate}[\rm(i)]
\item
If the operator $G(t,s)$ is bounded\,\footnote{which is the case under conditions (a) in Remark \ref{lppres}} in $L^1(\Rd)$ for any $t>s \in I$, then it is bounded from $W^{\theta_1,p}(\Rd)$ to
$W^{\theta_2,p}(\Rd)$,
for any $0\le
\theta_1\le \theta_2\le k$, $p\in (1,+\infty)$, $t>s\in I$ and
$\|G(t,s)\|_{L(W^{\theta_1,p}(\Rd),W^{\theta_2,p}(\Rd))}\le C_{\theta_1,\theta_2,p}(s,t)$
for any $\theta_1,\theta_2,p,k,t$ and $s$ as above and some positive function $C_{\theta_1,\theta_2,p}$, explicitly determined in the proof;
\item
each operator $G(t,s)$ is bounded from $W^{\theta_1,p}(\Rd,\mu_s)$ to $W^{\theta_2,p}(\Rd;\mu_t)$,
for any $p\in (1,+\infty)$, $\theta_1,\theta_2\in\{0,\ldots,k\}$, with $\theta_1\le\theta_2$, $t>s\in I$ and
\begin{eqnarray*}
\|G(t,s)\|_{L(W^{\theta_1,p}(\Rd,\mu_s),W^{\theta_2,p}(\Rd,\mu_t))}\le \widetilde C_{\theta_1,\theta_2,p}(t-s),\qquad\;\,t>s\in I,
\end{eqnarray*}
\end{enumerate}
for any $p,\theta_1,\theta_2$ as above and some positive function $\widetilde {C}_{\theta_1,\theta_2,p}:(0,+\infty)\to (0,+\infty)$
which can be explicitly computed $($see the proof\hskip 2pt$)$.
\end{proposition}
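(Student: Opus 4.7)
The plan is to reduce both assertions to the pointwise estimates \eqref{aa}--\eqref{aaaa} supplied by Theorems \ref{cucu} and \ref{coco} and then to integrate against the appropriate measure, using the $L^1$-boundedness hypothesis in case (i) and the invariance property \eqref{invariance} in case (ii). The main difference between the two parts is that in (ii) the identity $\int_{\Rd} G(t,s)g\,d\mu_t=\int_{\Rd} g\,d\mu_s$ is built in, while in (i) we must rely on the operator bound $\|G(t,s)\|_{L(L^1(\Rd))}<+\infty$.

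First I would treat the case of integer orders $\theta_1,\theta_2\in\{0,\ldots,k\}$ with $\theta_1\le\theta_2$, simultaneously for (i) and (ii). Fix a smooth $f$ in a suitable dense subclass of $W^{\theta_1,p}$ (say $C^\infty_c(\Rd)$, or $C_b^{\theta_2}(\Rd)$ for (ii), cf.\ Proposition \ref{prop-3.4}). For every $j\in\{0,\ldots,\theta_2\}$ set $\ell_j=\min\{\theta_1,j\}$. Applying \eqref{aaaa} with $h=\ell_j$ and $k=j$ yields the pointwise bound
\begin{equation*}
|D^j_x G(t,s)f|^p \le \Gamma^{(2)}_{p,\ell_j,j}(t-s)\, G(t,s)\bigg(\sum_{i=0}^{\ell_j}|D^i f|^2\bigg)^{p/2}\quad \text{in }\Rd,
\end{equation*}
where for $j\le\theta_1$ no differentiation is effectively gained and one can alternatively use the contractivity of $G(t,s)$ directly on the $L^p$-norm of $D^jf$ together with \eqref{fp}. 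Integrating this inequality against $d\mu_t$ and invoking \eqref{invariance} for (ii), or against $dx$ and using the $L^1$-boundedness together with the obvious inequality $(\sum_{i=0}^{\ell_j}a_i^2)^{p/2}\le C_{p,\theta_1}\sum_{i=0}^{\ell_j}|a_i|^p$ for (i), one gets
\begin{equation*}
\|D^j_x G(t,s)f\|_{L^p(\Rd,\mu_t)}^p\le \widetilde C_j(t-s)\,\|f\|_{W^{\theta_1,p}(\Rd,\mu_s)}^p,
\end{equation*}
and the analogous inequality with $\mu_t$ replaced by the Lebesgue measure for (i). Summing over $j\in\{0,\ldots,\theta_2\}$ produces the desired bound, with explicit constants
\begin{equation*}
\widetilde C_{\theta_1,\theta_2,p}(t-s)=\bigg(\sum_{j=0}^{\theta_2}\widetilde C_j(t-s)\bigg)^{1/p},
\end{equation*}
and similarly for $C_{\theta_1,\theta_2,p}(s,t)$, where the dependence on $s$ in part (i) enters only through $\|G(t,s)\|_{L(L^1(\Rd))}$.

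Next, the non-integer case in part (i) would be obtained by real (or complex) interpolation between the integer-order bounds just established. Writing, e.g., $\theta_j=(1-\tau_j)m_j+\tau_j(m_j+1)$ with $m_j\in\{0,\ldots,k-1\}$ and $\tau_j\in(0,1)$, the space $W^{\theta_j,p}(\Rd)$ is the interpolation space between $W^{m_j,p}(\Rd)$ and $W^{m_j+1,p}(\Rd)$, and the two endpoint bounds interpolate to give the assertion with an explicit constant expressed in terms of those at the nearby integer orders.

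The main obstacle I expect is mostly bookkeeping: to write down an honest formula for $C_{\theta_1,\theta_2,p}(s,t)$ and $\widetilde C_{\theta_1,\theta_2,p}(t-s)$ one has to keep track of all the functions $\Gamma^{(2)}_{p,h,k}$ produced by Theorems \ref{cucu} and \ref{coco}, of the behaviour $\Gamma^{(2)}_{p,h,k}(r)\sim c\, r^{-(k-h)p/2}$ as $r\to 0^+$, and, in part (i), of the norm of $G(t,s)$ on $L^1(\Rd)$, which need not be $(t-s)$-translation invariant. A secondary subtlety is the case $j\le\theta_1$ in the sum, where \eqref{aaaa} trivially reads as a contraction estimate; this has to be handled separately but causes no real difficulty.
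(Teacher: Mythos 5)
Your proposal follows essentially the same route as the paper: integrate the pointwise bounds \eqref{aaaa} against $dx$ (using the $L^1$-bound) in case (i) and against $d\mu_t$ (using invariance) in case (ii), then handle non-integer orders by interpolation. A minor stylistic difference is that you derive every integer pair $(\theta_1,\theta_2)$ directly by choosing $h=\ell_j=\min\{\theta_1,j\}$ in \eqref{aaaa} for each $j\le\theta_2$, whereas the paper only proves the two endpoint cases $(0,k)$ and $(k,k)$ and recovers all intermediate integer cases by interpolation. Both are correct; yours is arguably more elementary at the integer level.

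Your interpolation step for non-integer orders, however, is too loosely worded to stand as written. You say one should write $\theta_j=(1-\tau_j)m_j+\tau_j(m_j+1)$ for $j=1,2$ and ``interpolate the two endpoint bounds,'' but this only works directly if $\tau_1=\tau_2$, i.e.\ when $\theta_2-\theta_1$ is an integer. For a general pair (say $\theta_1=0.5$, $\theta_2=2.3$) the two convex combinations have different weights and there is no single pair of endpoint mappings to interpolate. The paper handles this by a three-stage argument: first interpolate $G(t,s)\in L(L^p,W^{k,p})\cap L(W^{k,p})$ to get boundedness $W^{\theta_1,p}\to W^{k,p}$; then interpolate $G(t,s)\in L(L^p)\cap L(W^{k,p})$ to get boundedness $W^{\theta_1,p}\to W^{\theta_1,p}$; finally interpolate $G(t,s)\in L(W^{\theta_1,p})\cap L(W^{\theta_1,p},W^{k,p})$ with parameter $(\theta_2-\theta_1)/(k-\theta_1)$ to land in $L(W^{\theta_1,p},W^{\theta_2,p})$. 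You would need to spell out a scheme of this kind (or an equivalent one) to close the non-integer case. Everything else in your argument is sound, including the use of $\|G(t,s)\|_{L(L^p(\Rd))}\le\|G(t,s)\|_{L(L^1(\Rd))}^{1/p}$ (which comes from interpolating the $L^1$ bound with the $L^\infty$ bound \eqref{reality}) and the elementary estimate $(\sum_i a_i^2)^{p/2}\lesssim\sum_i|a_i|^p$.
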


\begin{proof}
(i) We set $c_{\alpha,\beta,q}(s,t)=\|G(t,s)\|_{L(W^{\alpha,q}(\Rd),W^{\beta,q}(\Rd))}$ for any $I\ni s<t$,
$0\le\alpha\le \beta$ and $q\in [1,+\infty)$.
By assumptions $c_{0,0,1}(s,t)$ is finite for any $s$ and $t$ as above.
By interpolation, from \eqref{reality} we deduce that also $c_{0,0,p}(s,t)$ is finite for any $t>s\in I$, $p\in (1,+\infty)$ and
$c_{0,0,p}(s,t)\le (c_{0,0,1}(s,t))^{1/p}$.

Now, fix $k$ as in the statement of Theorem \ref{coco}. Integrating estimate \eqref{aaaa} (with $h=0$) in $\Rd$ with
respect to the Lebesgue measure and using the density of $C^\infty_c(\Rd)$ in $W^{k,p}(\Rd)$, we get
\begin{align*}
c_{0,k,p}(s,t)\le |c_{0,0,1}(s,t)|^{\frac{1}{p}}\bigg [\sum_{j=1}^k(\Gamma^{(2)}_{p,0,j}(t-s))^{\frac{1}{p}}+1\bigg ]
\end{align*}
for any $t>s \in I$, $p\in (1,+\infty)$. Moreover, even from \eqref{aaaa} it follows that
\begin{align*}
&\|D^h_xG(t,s)f\|_{L^p(\Rd)}\\
\le &(\Gamma^{(2)}_{p,h,h}(t-s))^{\frac{1}{p}}(c_{0,0,1}(t-s))^{\frac{1}{p}}
\bigg (\int_{\Rd}\bigg (\sum_{j=0}^h|D^jf|^2\bigg )^{\frac{p}{2}}dx\bigg )^{\frac{1}{p}}\\
\le &(\Gamma^{(2)}_{p,h,h}(t-s))^{\frac{1}{p}}(c_{0,0,1}(t-s))^{\frac{1}{p}}\max\big\{2^{\frac{1}{2}-\frac{1}{p}},1\big\}\bigg (\int_{\Rd}\sum_{j=0}^h |D^jf|^pdx\bigg )^{\frac{1}{p}}\\
\le &(\Gamma^{(2)}_{p,h,h}(t-s))^{\frac{1}{p}}(c_{0,0,1}(t-s))^{\frac{1}{p}}\max\big\{2^{\frac{1}{2}-\frac{1}{p}},1\big\}\|f\|_{W^{h,p}(\Rd)}
\end{align*}
for any $h\in\{1,\ldots,k\}$, $t>s\in I$, $p\in (1,+\infty)$.
Hence,
\begin{align*}
c_{k,k,p}(s,t)\le |c_{0,0,1}(s,t)|^{\frac{1}{p}}\max\big\{2^{\frac{1}{2}-\frac{1}{p}},1\big\}\bigg [\sum_{h=1}^k(\Gamma^{(2)}_{p,h,h}(t-s))^{\frac{1}{p}}+1\bigg ].
\end{align*}
The claim is thus proved for $(\theta_1,\theta_2)=(0,k)$ and $(\theta_1,\theta_2)=(k,k)$. The
remaining cases follow by interpolation, taking into account that for any
$\theta \in (0,1)$ and $p \in [1, +\infty)$, $W^{(1-\theta)\theta_1+\theta\theta_2,p}(\Rd)=(W^{\theta_1,p}(\Rd),W^{\theta_2,p}(\Rd))_{\theta,p}$ with equivalence of the respective norms (see \cite[Theorem 2.4.1(a)]{triebel}).
More precisely, since $G(t,s)$ belongs to $L(L^p(\Rd), W^{k,p}(\Rd))\cap L(W^{k,p}(\Rd))$, it follows that
$G(t,s)\in L(W^{\theta_1,p}(\Rd), W^{k,p}(\Rd))$ for any $\theta_1\in (0,k)$ and
$$c_{\theta_1,k,p}(s,t)\le (c_{0,k,p}(s,t))^{1-\theta_1/k}(c_{k,k,p}(s,t))^{\theta_1/k}$$ for any $I\ni s<t$.
Moreover, since $G(t,s)\in L(L^p(\Rd))\cap L(W^{k,p}(\Rd))$ for any $t>s\in I$, $G(t,s)$ is bounded from $W^{\theta_1,p}(\Rd)$ into itself and
\begin{eqnarray*}
c_{\theta_1,\theta_1,p}(s,t)\le (c_{0,0,p}(s,t))^{1-\frac{\theta_1}{k}}(c_{k,k,p}(s,t))^{\frac{\theta_1}{k}},\qquad\;\,I\ni s<t.
\end{eqnarray*}
Finally, using the fact that $G(t,s)\in L(W^{\theta_1,p}(\Rd))\cap L(W^{\theta_1,p}(\Rd),W^{k,p}(\Rd))$, we conclude that
$G(t,s)\in L(W^{\theta_1,p}(\Rd),W^{\theta_2,p}(\Rd))$ for any $0\le\theta_1\le\theta_2\le k$ and
$c_{\theta_1,\theta_2,p}(s,t)\le (c_{\theta_1,\theta_1,p}(s,t))^{(k-\theta_2)/(k-\theta_1)}(c_{\theta_1,k,p}(s,t))^{(\theta_2-\theta_1)/(k-\theta_1)}$
for any $t>s\in I$. The claim follows.

(ii) The proof is obtained immediately integrating the pointwise estimates \eqref{aaaa},
taking the invariance property of the evolution system $\{\mu_t: t\in I\}$ into account and arguing as in (i).
We get $\widetilde{C}_{0,0,p}(r)=1$,
\begin{align*}
&\widetilde{C}_{h,h,p}(r)=\max\big\{2^{\frac{1}{2}-\frac{1}{p}},1\big\}\sum_{j=1}^h(\Gamma^{(2)}_{p,j,j}(r))^{\frac{1}{p}}+1,\qquad\;\,h\ge 1;\\[1mm]
&\widetilde{C}_{h,k,p}(r)=\max\big\{2^{\frac{1}{2}-\frac{1}{p}},1\big\}^h\sum_{j=0}^k(\Gamma^{(2)}_{p,h,j}(r))^{\frac{1}{p}}+1,\qquad\;\,h=0,1;\\[1mm]
&\widetilde{C}_{2,3,p}(r)=C_{1,1,p}(r)+\max\big\{2^{\frac{1}{2}-\frac{1}{p}},1\big\}\sum_{j=2}^3(\Gamma^{(2)}_{p,2,2}(r))^{\frac{1}{p}}+1.\qedhere
\end{align*}
\end{proof}
\begin{remark}
{\rm Under conditions (a) in Remark \ref{lppres}, the functions $c_{0,0,p}(s,t)$, $p \ge 1$ in the proof of
 Proposition \ref{prop-wilde}, are explicit. More precisely $c_{0,0,p}(s,t)=e^{K_0(t-s)/p}$.}
\end{remark}

Whenever the uniform estimate
\begin{equation}
\|\nabla_xG(t,s)f\|_{\infty}\le e^{\sigma_{\infty}(t-s)}\|f\|_{C^1_b(\Rd)},\qquad\;\,t>s\in I,\;\,f\in C^1_b(\Rd),
\label{around}
\end{equation}
holds true for some negative constant $\sigma_{\infty}$, the tight evolution system of measures is unique, as the following Proposition \ref{norma_p} shows.
Note that Theorem \ref{cucu} provides us with a sufficient condition for \eqref{around} to hold.

In the rest of this section we denote by $\overline{f}_s$ the average of $f$ with respect to the measure $\mu_s$, i.e.,
\begin{eqnarray*}
\overline{f}_s=\int_{\Rd}fd\mu_s,\qquad\;\, s\in I.
\end{eqnarray*}
\begin{proposition}
\label{norma_p}
If \eqref{around} holds true, then the tight evolution system of measures associated to $G(t,s)$ is unique.
\end{proposition}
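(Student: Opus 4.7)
The plan is to argue that, under \eqref{around}, the evaluation $f\mapsto\int_{\Rd}fd\mu_s$ on any tight evolution system of measures $\{\mu_t\}$ is completely determined by $G(t,s)$, by showing that $G(t,s)f$ becomes ``asymptotically constant'' as $t\to+\infty$ in a sense compatible with tightness.

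Let $\{\mu_t:t\in I\}$ and $\{\nu_t:t\in I\}$ be two tight evolution systems of measures for $G(t,s)$. Fix $s\in I$ and $f\in C_b^1(\Rd)$. Applying the invariance property \eqref{invariance} to both families, for every $t>s$ I would write
\begin{equation*}
\int_{\Rd}fd\mu_s-\int_{\Rd}fd\nu_s=\int_{\Rd}G(t,s)fd\mu_t-\int_{\Rd}G(t,s)fd\nu_t.
\end{equation*}
Since $\mu_t$ and $\nu_t$ are probability measures, subtracting the constant $(G(t,s)f)(0)$ inside each integral does not change the right-hand side, so
\begin{equation*}
\int_{\Rd}fd\mu_s-\int_{\Rd}fd\nu_s=\int_{\Rd}[G(t,s)f-(G(t,s)f)(0)]d\mu_t-\int_{\Rd}[G(t,s)f-(G(t,s)f)(0)]d\nu_t.
\end{equation*}

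The key step is to estimate both integrals on the right. Given $\varepsilon>0$, by tightness there exists $R_\varepsilon>0$ such that $\mu_t(\Rd\setminus B_{R_\varepsilon})<\varepsilon$ and $\nu_t(\Rd\setminus B_{R_\varepsilon})<\varepsilon$ uniformly in $t\in I$. On $B_{R_\varepsilon}$ the mean value theorem together with \eqref{around} yields
\begin{equation*}
|(G(t,s)f)(x)-(G(t,s)f)(0)|\le R_\varepsilon\|\nabla_xG(t,s)f\|_\infty\le R_\varepsilon e^{\sigma_\infty(t-s)}\|f\|_{C^1_b(\Rd)},
\end{equation*}
while on the complement I would simply use the contractivity $\|G(t,s)f\|_\infty\le\|f\|_\infty$ (cf.\ \eqref{reality} with $c_0=0$) to bound $|G(t,s)f-(G(t,s)f)(0)|\le 2\|f\|_\infty$. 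Combining these on each of the two integrals gives
\begin{equation*}
\biggl|\int_{\Rd}fd\mu_s-\int_{\Rd}fd\nu_s\biggr|\le 2R_\varepsilon e^{\sigma_\infty(t-s)}\|f\|_{C^1_b(\Rd)}+4\varepsilon\|f\|_\infty.
\end{equation*}

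Now I would let $t\to+\infty$: since $\sigma_\infty<0$, the first term vanishes, leaving only $4\varepsilon\|f\|_\infty$. As the left-hand side is independent of $t$, and then of $\varepsilon$, letting $\varepsilon\to 0^+$ yields $\int_{\Rd}fd\mu_s=\int_{\Rd}fd\nu_s$ for every $f\in C_b^1(\Rd)$. By density of $C_b^1(\Rd)$ in $C_b(\Rd)$ with respect to bounded pointwise convergence (and dominated convergence against the finite measures $\mu_s,\nu_s$), this equality extends to $f\in C_b(\Rd)$, so $\mu_s=\nu_s$. Since $s$ was arbitrary, the uniqueness follows.

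The only genuinely delicate point is the tightness-based splitting above: without tightness one could not discard the complement of $B_{R_\varepsilon}$, and the gradient estimate alone would not suffice because $G(t,s)f$ need not be bounded in a weighted norm accounting for the growth of $|x|$. Tightness is precisely what makes the $\|f\|_\infty$--bound on large balls harmless, and the negative sign of $\sigma_\infty$ is what lets the $R_\varepsilon$-term disappear as $t\to+\infty$.
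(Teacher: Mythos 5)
Your proof is correct and rests on the same two ingredients as the paper's: the tightness splitting into a large ball and its complement, the sup bound on the complement, and the negative-exponent gradient estimate \eqref{around} to kill the oscillation of $G(t,s)f$ on the ball, combined with the invariance property for both systems. The only (cosmetic) difference is organizational: the paper first centres $G(t,s)f$ at its $\mu_t$-average and proves the local uniform convergence $\|G(t,s)f-\overline f_s\|_{C_b(B_R)}\to 0$ using a time-dependent radius $r_t=e^{-\sigma_\infty t/2}\to+\infty$, whereas you centre at the value $(G(t,s)f)(0)$ and use a fixed radius $R_\varepsilon$ with a double limit $t\to+\infty$, $\varepsilon\to 0^+$ — both are valid and essentially equivalent.
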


\begin{proof}
We fix $s\in I$, $f\in C^1_b(\Rd)$, set $r_t=e^{-\sigma_\infty t/2}$, for any  $t\geq s$, and observe that,
for any $t>s$ and $x\in\Rd$,
\begin{align*}
|(G(t,s)f)(x)-\overline f_s|=&\bigg |\int_{\Rd}[(G(t,s)f)(x)-G(t,s)f]d\mu_t\bigg |\nonumber\\
\le &\int_{B_{r_t}} |(G(t,s)f)(x)-G(t,s)f|d\mu_t\nonumber\\
&+ \int_{\Rd\setminus B_{r_t}} |(G(t,s)f)(x)-G(t,s)f|d\mu_t\nonumber\\
\leq & \Vert \nabla_x G(t,s)f\Vert_\infty \int_{B_{r_t}} |x-y|d\mu_t+2\|f\|_{\infty}\mu_t(\Rd\setminus B_{r_t})\nonumber\\
\leq & e^{\sigma_{\infty}(t-s)}\|\nabla f\|_\infty \bigg (|x|+ \int_{B_{r_t}}|y|d\mu_t\bigg )+ 2\|f\|_\infty \mu_t(\Rd\setminus B_{r_t}).
\end{align*}
Hence,
$\|G(t,s)f-\overline f_s\|_{C_b(B_R)}\le e^{\sigma_{\infty}(t-s)}R\|\nabla f\|_\infty+H(s,t,f)$ for any $t>s$, where
$H(s,t,f):=e^{(\sigma_{\infty}t-2s)/2}\|\nabla f\|_\infty+2\|f\|_\infty \mu_t(\Rd\setminus B_{r_t})$.
The tightness of the measures $\{\mu_t: t\in I\}$ shows that $\mu_t(\Rd\setminus B_{r_t})$ tends to $0$ as $t\to +\infty$ and, consequently,
$\|G(t,s)f-\overline f_s\|_{C_b(B_R)}$ vanishes as $t\to +\infty$ for any $R>0$.

Using this result, we can conclude the proof. Indeed,
assume by contradiction that there exists another tight evolution system of measures $\{\nu_s: s\in I\}$ associated to $G(t,s)$. Then,
for any $f\in C^\infty_c(\Rd)$, the average of $f$ with respect to $\mu_s$ and to $\nu_s$ coincide for every $s$.
Since the characteristic function of a Borel set $A$ is the almost everywhere limit
of a sequence of functions in $C^{\infty}_c(\Rd)$, by the dominated convergence theorem, we conclude that
$\mu_s(A)=\nu_s(A)$ for every $s\in I$ and, thus, the two evolution systems of measures actually coincide.
\end{proof}

\begin{remark}
\label{rem-luca}
By the results in \cite{BKR} each measure $\mu_t$ is absolutely continuous with respect to the Lebesgue measure. More precisely, there exists a continuous function
$\rho:I\times\Rd\to\R$ such that $d\mu_t=\rho(t,\cdot)dx$.
\end{remark}

\subsection{Logarithmic Sobolev inequalities and summability improving properties}

The so-called {\it logarithmic Sobolev inequality} \eqref{Log_Sob-1} is crucial in the study of the evolution operator $G(t,s)$
in the $L^p$-spaces related to the tight evolution system of measures $\{\mu_t: t\in I\}$. These estimates, proved firstly in 1975
by Gross for the Gaussian measures, represent the counterpart of the Sobolev embedding theorems which fail in general when
the measure is not the Lebesgue measure.

\begin{example}
\label{ex-noSob}
{\rm Let $\mu$ be the one-dimensional Gaussian measure, whose density with respect to the Lebesgue measure is the function
$\psi:\R\to\R$, defined by $\psi(x)=\frac{1}{\sqrt{2\pi}}e^{-x^2/2}$ for any $x\in\R$.
For any $\ve>0$, the function $f_{\ve}(x)=e^{x^2/(4\ve)}$ belongs to $W^{1,2}(\R,\mu)$ but it does not belong to
$L^{2+\ve/2}(\R)$. Hence, no embeddings of $W^{1,2}(\R,\mu)$ into $L^q(\R,\mu)$ exist if $q>2$.
We note that $\mu$ is the invariant measure of the semigroup associated with the Ornstein-Uhlenbeck operator
$\A=D_xx-xD_x$.}
\end{example}

The logarithmic Sobolev inequality \eqref{Log_Sob-1} yields some relevant results as the next proposition shows.

\begin{proposition}
Assume that \eqref{Log_Sob-1} is satisfied. Then, the following assertions hold true.
\begin{enumerate}[\rm (i)]
\item
$W^{1,p}(\Rd,\mu_s)$ is compactly embedded in $L^p(\R^d,\mu_s)$ for any $p\in [2,+\infty)$ and $s\in I$;
\item
for any $t>s$ and $p\in (1,+\infty)$, $G(t,s)$ is a compact operator from $L^p(\Rd,\mu_s)$ into $L^p(\Rd, \mu_t)$;
\item
The Poincar\'e inequality
$\|f- \overline{f}_s\|_{L^2(\R^d,\mu_s)}\leq 2^{-1}C_2\|\nabla f\|_{L^2(\R^d,\mu_s)}$
holds true for any $f\in W^{1,2}(\Rd,\mu_s)$ and $s\in I$.
\end{enumerate}
\end{proposition}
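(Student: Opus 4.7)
I would run a Rothaus-type linearization. Fix a smooth bounded $g$ with $\overline{g}_s=0$ and apply \eqref{Log_Sob-1} with $p=2$ to $f_\varepsilon:=1+\varepsilon g$, which is strictly positive for $\varepsilon>0$ small. Taylor-expanding $u\log u$ around $u=1$ and integrating against $\mu_s$, one obtains
\begin{eqnarray*}
\int_{\Rd}f_\varepsilon^2\log f_\varepsilon^2\,d\mu_s-\|f_\varepsilon\|_{L^2(\mu_s)}^2\log\|f_\varepsilon\|_{L^2(\mu_s)}^2=2\varepsilon^2\|g\|_{L^2(\mu_s)}^2+O(\varepsilon^3),
\end{eqnarray*}
while the right-hand side of \eqref{Log_Sob-1} equals $C_2\varepsilon^2\|\nabla g\|_{L^2(\mu_s)}^2$. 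Dividing by $\varepsilon^2$ and letting $\varepsilon\to 0^+$ yields the Poincar\'e inequality (modulo tracking the numerical constant), and a density argument extends it to every $g\in W^{1,2}(\Rd,\mu_s)$.

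\textbf{Proof plan for (i).} Let $(f_n)\subset W^{1,p}(\Rd,\mu_s)$ be bounded with $p\ge 2$. By Remark \ref{rem-luca} the density $\rho(s,\cdot)$ of $\mu_s$ is continuous and, by the strong maximum principle applied to the Fokker-Planck equation it satisfies, strictly positive; hence on each ball $B_R$ it is bounded above and below by positive constants. Thus $(f_n)$ is bounded in $W^{1,p}(B_R)$ in the Lebesgue sense, and Rellich-Kondrachov together with a diagonal extraction yields a subsequence $(f_{n_k})$ converging in $L^p(B_R,\mu_s)$ for every $R>0$ and $\mu_s$-a.e. in $\Rd$. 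To upgrade this to convergence in $L^p(\Rd,\mu_s)$ it suffices, by Vitali's theorem, to prove uniform integrability of $\{|f_{n_k}|^p\}$. H\"older's inequality (which uses $p\ge 2$) bounds $\int|f_n|^{p-2}|\nabla f_n|^2d\mu_s$ by $\|f_n\|_{L^p(\mu_s)}^{p-2}\|\nabla f_n\|_{L^p(\mu_s)}^2$, so \eqref{Log_Sob-1} produces a uniform bound on $\int|f_n|^p\log|f_n|^p d\mu_s$. Since $t\log t\ge -1/e$ on $[0,1]$, the positive part $\int_{\{|f_n|^p\ge 1\}}|f_n|^p\log|f_n|^p d\mu_s$ is uniformly bounded, and the de la Vall\'ee-Poussin criterion supplies the required uniform integrability.

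\textbf{Proof plan for (ii).} When $p\ge 2$, I would apply the pointwise estimate \eqref{aaaa} with $h=0$ and $k=1$ (Theorem \ref{coco}) to get $|\nabla_x G(t,s)f|^p\le\Gamma^{(2)}_{p,0,1}(t-s)\,G(t,s)|f|^p$ on $\Rd$; integrating against $\mu_t$ and using the invariance \eqref{invariance} yields $\|G(t,s)f\|_{W^{1,p}(\mu_t)}\le C(t-s)\|f\|_{L^p(\mu_s)}$, so $G(t,s)$ factors through the compact embedding $W^{1,p}(\mu_t)\hookrightarrow L^p(\mu_t)$ from (i) and is therefore compact. For $p\in(1,2)$ I would invoke the hypercontractivity of $G$, which follows from \eqref{Log_Sob-1} via Gross's theorem: choose $r\in(s,t)$ and $q\ge 2$ so that $G(r,s)\colon L^p(\mu_s)\to L^q(\mu_r)$ is bounded, apply the previous step to $G(t,r)\colon L^q(\mu_r)\to L^q(\mu_t)$ to obtain a compact operator, and postcompose with the continuous inclusion $L^q(\mu_t)\hookrightarrow L^p(\mu_t)$ (valid because $\mu_t$ is a probability measure). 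The most delicate step I anticipate is the tail control in (i), which requires turning the defective inequality \eqref{Log_Sob-1} into genuine uniform integrability on the unbounded space $\Rd$ while handling the negative part of $t\log t$; a secondary subtlety is the reduction for $p\in(1,2)$ in (ii), since one has to bootstrap through hypercontractivity into the $p\ge 2$ regime.
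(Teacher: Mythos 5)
Your parts (i) and (iii) are essentially the paper's own arguments. For (iii) the paper performs exactly the Rothaus linearization you describe (apply \eqref{Log_Sob-1} with $p=2$ to $1+\ve f$ for $f$ with $\overline f_s=0$ and pass to the limit), and for (i) it proves the same two ingredients — local compactness via Rellich--Kondrachov on balls (using that the density of $\mu_s$ is continuous and positive, so $L^p(B_R,\mu_s)=L^p(B_R)$) and a tail control coming from \eqref{Log_Sob-1}. The only cosmetic difference in (i) is the packaging of the tail control: the paper splits on the level set $\{|f|\le k\}$ and shows directly that $\|f\|_{L^p(\Rd\setminus B_R,\mu_s)}$ is small uniformly over the unit ball of $W^{1,p}(\Rd,\mu_s)$ (total boundedness), whereas you derive uniform integrability of $\{|f_n|^p\}$ from the entropy bound and invoke Vitali; both hinge on the same H\"older estimate of $\int|f|^{p-2}|\nabla f|^2d\mu_s$ and are equally valid. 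Your first half of (ii) ($p\ge 2$: factor $G(t,s)$ through the compact embedding of (i) using the $L^p\to W^{1,p}$ bound from \eqref{aaaa}) is also the paper's proof.

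The genuine gap is in your reduction of (ii) to the case $p\ge2$ when $p\in(1,2)$. Hypercontractivity in this setting (Theorem \ref{th-5.2}) is \emph{not} instantaneous: $G(r,s)$ maps $L^p(\Rd,\mu_s)$ into $L^q(\Rd,\mu_r)$ only when $r-s\ge \frac{\Lambda_0}{2\nu_0|r_0|}\log\frac{q-1}{p-1}$, and since you need $q\ge2>p$ this threshold is bounded away from zero (it is $\ge\frac{\Lambda_0}{2\nu_0|r_0|}\log\frac{1}{p-1}>0$). Hence for $t-s$ below that threshold there is no admissible intermediate time $r\in(s,t)$, and your factorization $G(t,s)=G(t,r)G(r,s)$ through an $L^q$ space with $q\ge2$ cannot be carried out; choosing $q\in(p,2)$ does not help because part (i) gives compactness of the embedding only for exponents $\ge2$. (A secondary point: in the paper hypercontractivity is established under the additional standing assumptions of Theorem \ref{thm_log_sob}, not from \eqref{Log_Sob-1} alone, so invoking ``Gross's theorem'' here would itself require an argument.) The paper circumvents all of this by interpolating compactness between the bounded endpoint $G(t,s):L^1(\Rd,\mu_s)\to L^1(\Rd,\mu_t)$ and the compact endpoint $G(t,s):L^2(\Rd,\mu_s)\to L^2(\Rd,\mu_t)$ via the Stein interpolation theorem, which works for every $t>s$ regardless of the size of $t-s$. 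You would need to replace your hypercontractivity step by such an interpolation (or a duality) argument to close the case $p\in(1,2)$.
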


\begin{proof}
(i) Fix $p\ge 2$. The logarithmic Sobolev inequality implies that, for any $s\in I$, the $\|f\|_{L^p(\Rd\setminus B_R,\mu_s)}$ vanishes as $R\to +\infty$, uniformly with respect to
$f$ in the closed unit ball of $W^{1,p}(\Rd,\mu_s)$.
Indeed,  for any $f\in W^{1,p}(\Rd,\mu_s)$ and $k\in\N$, introduce the set $E_k=\{x\in\Rd: |f(x)|\le k\}$ and observe that the logarithmic Sobolev inequality \eqref{Log_Sob-1} (which can be extended by density to any function in $W^{1,p}(\Rd,\mu_s)$) and H\"older inequality show that
\begin{align*}
\|f\|_{L^p(\Rd\setminus B_R,\mu_s)}^p=&\int_{E_k\cap (\Rd\setminus B_R)}|f|^pd\mu_s
+\int_{\Rd\setminus (B_R\cup E_k)}|f|^pd\mu_s\\
\le & k^p\mu_s(\Rd\setminus B_R)
+\frac{1}{\log(k)}\int_{\Rd}|f|^p\log(|f|)d\mu_s\\
\le & k^p\mu_s(\Rd\setminus B_R)\\
&+\frac{1}{\log(k)}\bigg [\|f\|_{L^p(\Rd,\mu_s)}^p\!\log(\|f\|_{L^p(\Rd,\mu_s)})\!+\!\frac{C_p}{p}\|f\|_{W^{1,p}(\Rd,\mu_s)}\bigg ].
\end{align*}
Hence, $\|f\|_{L^p(\Rd\setminus B_R,\mu_s)}\le k^p\mu_s(\Rd\setminus B_R)+M(\log(k))^{-1}$
for any $R,k>0$ and some positive constant $M$,
if $\|f\|_{W^{1,p}(\Rd,\mu_s)}\le 1$. Letting first $R$ and, then, $k$ tend to $+\infty$, the claim follows.

To conclude the proof, it suffices to show that, for any $R>0$,
the set $\{f_{|B_R}: f\in W^{1,p}(\Rd,\mu_s),~\|f\|_{W^{1,p}(\Rd,\mu_s)}\le 1\}$ is totally bounded in
$L^p(B_R,\mu_s)$, but this follows straightforwardly, from observing that the measure $\mu_s$ is absolutely continuous with respect to the Lebesgue measure and
its density is a positive continuous functions. This shows that $L^p(B_R)=L^p(B_R,\mu_s)$, with equivalence of the corresponding norms, and the Rellich-Kondrakov
theorem shows that $W^{1,p}(B_R)$ is compactly embedded into $L^p(B_R)$.

(ii) The proof follows from (i), if $p\ge 2$, recalling that each operator $G(t,s)$ is bounded from $L^p(\Rd,\mu_s)$ into $W^{1,p}(\Rd,\mu_s)$ (see Proposition \ref{prop-wilde})
To prove it for $p\in (1,2)$ it suffices to apply Stein interpolation theorem (see \cite[Thm. 1.6.1]{Davies}) taking into account that $G(t,s)$ is bounded from $L^1(\Rd,\mu_s)$ into $L^1(\Rd,\mu_t)$,
 for any $t>s\in I$.

(iii)
By the density of $C^1_b(\Rd)$ in $W^{1,2}(\Rd,\mu_s)$, it suffices to prove the Poincar\'e inequality for functions in $C^1_b(\Rd)$. Moreover it is not restrictive to assume that $\overline{f}_s=0$.
Indeed, once the Poincar\'e inequality is proved for functions with zero average with respect to $\mu_s$, applying it to the function $f-\overline f_s$, we get it in the general case.

The proof of the Poincar\'e inequality for functions $f\in C^1_b(\Rd)$ with $\overline f_s=0$ follows from applying the logarithmic Sobolev inequality (with $p=2$) to
the function $1+\varepsilon f$ ($\ve>0$), then dividing both sides by $\ve$ and letting $\ve\to 0^+$.
\end{proof}

\begin{remark}
\label{rem-Poi}
{\rm
The Poincar\'e  inequality can be proved also for $p>2$ and some positive constant $\widetilde C_p$. A classical proof can be found for example in \cite[Theorem 5.8.1]{evans} and is based on the compact embedding of $W^{1,p}(\Rd,\mu_s)$ into $L^p(\Rd,\mu_s)$ for $p \ge 2$. On the other hand, another approach relies on an iterative procedure which starts from the case $p=2$. Differently from the first approach, the second one, adopted in \cite{AngLorLun}, allows to control how $\widetilde C_p$ depends on $s$.
}\end{remark}

A sufficient condition for the logarithmic Sobolev inequality to hold is proved in \cite{AngLorLun}.
The main tool of the proof is the pointwise gradient estimate
\begin{equation}\label{stimapoint}
|(\nabla_xG(t,s)f)(x)|\le e^{\sigma_1(t-s)}(G(t,s)|\nabla f|)(x),\qquad\;\, t>s,\;\, x \in \Rd,\;\,f\in C_b^1(\Rd),
\end{equation}
for some $\sigma_1<0$. Whenever \eqref{stimapoint} holds, estimate \eqref{around} is satisfied with $\sigma_{\infty}=\sigma_1$. Hence, there exists
a unique tight evolution system of measures. In the rest of the section, we always deal with such an evolution system of measures.

\begin{theorem}[Theorem 3.3 of \cite{AngLorLun}]
\label{thm_log_sob}
Suppose that the diffusion coefficients of the operator $\A$ are independent of $x$ and bounded. Further, suppose that
$\langle {\rm Jac}_xb(t,x)\xi,\xi\rangle\le r_0|\xi|^2$ for any $t\in I$, $x, \xi \in \Rd$ and some negative constant $r_0$.
Then, estimate \eqref{stimapoint} holds true for any $f\in W^{1,p}(\Rd,\mu_s)$, $p\in (1,+\infty)$, $s\in I$, with $\sigma_1=r_0$, estimate \eqref{Log_Sob-1} holds true with $C_p=(2|r_0|)^{-1}p^2\Lambda_0$,
where $\Lambda_0$ denotes the supremum over $I$ of the maximum eigenvalue of the matrix $Q(t)$.
\end{theorem}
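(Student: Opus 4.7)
My plan is to establish the two assertions in order: first the pointwise gradient estimate \eqref{stimapoint}, then the logarithmic Sobolev inequality \eqref{Log_Sob-1}, obtained from the first one by a Gross--type entropy argument adapted to the nonautonomous setting.

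For \eqref{stimapoint} I would run the Bernstein method exactly as in the $k=p=1$ branch of Theorem \ref{teo-sharzan}. Fix $f\in C^2_b(\Rd)$, set $u=G(\cdot,s)f$, $v_k=D_ku$ and $w=|\nabla_xu|^2$. Since the diffusion coefficients depend only on $t$, differentiating the equation for $u$ gives $D_tv_k=\A v_k+\sum_j(D_kb_j)v_j$, and a direct computation yields
\begin{equation*}
(D_t-\A)w=2\langle(\mathrm{Jac}_xb)\nabla_xu,\nabla_xu\rangle-2\sum_k\langle Q\nabla v_k,\nabla v_k\rangle\le 2r_0w.
\end{equation*}
Passing to $w_\tau=(w+\tau)^{1/2}$ ($\tau>0$) via the chain rule, together with the Cauchy--Schwarz-type bound $\langle Q\nabla w,\nabla w\rangle\le 4w\sum_k\langle Q\nabla v_k,\nabla v_k\rangle$, leads to
\begin{equation*}
(D_t-\A)w_\tau\le r_0\frac{w}{w_\tau}=r_0w_\tau-\frac{r_0\tau}{w_\tau}\le r_0w_\tau+|r_0|\sqrt{\tau}.
\end{equation*}
Hence $e^{-r_0(\cdot-s)}w_\tau$ is, up to a forcing term of order $\sqrt{\tau}$, a subsolution of $D_t\phi=\A\phi$; Duhamel's formula combined with the maximum principle of \cite[Proposition~2.2]{AngLor10Com} gives $w_\tau(t,\cdot)\le e^{r_0(t-s)}G(t,s)w_\tau(s,\cdot)+O(\sqrt{\tau})$, and letting $\tau\to 0^+$ proves \eqref{stimapoint} for $f\in C^2_b(\Rd)$. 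A density argument in $W^{1,p}(\Rd,\mu_s)$ then extends it to any $p\in(1,+\infty)$ and any $f\in W^{1,p}(\Rd,\mu_s)$.

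For \eqref{Log_Sob-1} I fix a strictly positive $f\in C^1_b(\Rd)$ (the general case follows by applying the result to $|f|+\varepsilon$ and letting $\varepsilon\to 0^+$), write $\eta(r,\cdot)=G(r,s)f^p$ for $r\in[s,t]$, and study the functional $\Phi(r)=G(t,r)(\eta\log\eta)$. Since $D_r\eta=\A\eta$, the pointwise identity $(D_r-\A)(\eta\log\eta)=-\langle Q\nabla\eta,\nabla\eta\rangle/\eta$ yields $\Phi'(r)=-G(t,r)(\langle Q\nabla\eta,\nabla\eta\rangle/\eta)$, and integrating on $[s,t]$ gives
\begin{equation*}
G(t,s)(f^p\log f^p)-(G(t,s)f^p)\log(G(t,s)f^p)=\int_s^t G(t,r)\left(\frac{\langle Q\nabla\eta,\nabla\eta\rangle}{\eta}\right)dr.
\end{equation*}
Now \eqref{stimapoint} combined with the factorisation $|\nabla f^p|=pf^{p/2}\cdot f^{p/2-1}|\nabla f|$ and the Cauchy--Schwarz inequality \eqref{fg} for $G(r,s)$ gives $|\nabla\eta|^2/\eta\le p^2e^{2r_0(r-s)}G(r,s)(f^{p-2}|\nabla f|^2)$; bounding $\langle Q\cdot,\cdot\rangle\le\Lambda_0|\cdot|^2$ and using the evolution law $G(t,r)G(r,s)=G(t,s)$ produces
\begin{equation*}
G(t,s)(f^p\log f^p)-(G(t,s)f^p)\log(G(t,s)f^p)\le\Lambda_0p^2\,\frac{1-e^{2r_0(t-s)}}{2|r_0|}\,G(t,s)(f^{p-2}|\nabla f|^2).
\end{equation*}
Integrating against $\mu_t$, the invariance property \eqref{invariance} converts every $G(t,s)(\cdot)$-integral into the corresponding $\mu_s$-integral; letting $t\to+\infty$ --- using Proposition \ref{norma_p} to identify $\lim_t G(t,s)f^p=\|f\|_{L^p(\Rd,\mu_s)}^p$ locally uniformly, together with tightness of $\{\mu_t\}$ and the uniform $L^\infty$-bound on $G(t,s)f^p$ to pass to the limit in $\int(G(t,s)f^p)\log(G(t,s)f^p)d\mu_t$ --- yields \eqref{Log_Sob-1} with $C_p=p^2\Lambda_0/(2|r_0|)$.

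The main obstacle lies in managing two delicate limits: (i) the $\tau\to 0^+$ passage in the Bernstein step, which requires uniform control of the correction $r_0\tau/w_\tau$ near the zero set $\{\nabla u=0\}$; and (ii) the $t\to+\infty$ passage in the entropy identity, which is nontrivial because both the integrand and the measure $\mu_t$ depend on $t$. The latter is resolved through the tightness of the evolution system together with the local uniform decay of $G(t,s)f^p$ to its $\mu_s$-average, both of which are themselves consequences of \eqref{stimapoint}.
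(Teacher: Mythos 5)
The paper states this theorem without proof, quoting it from \cite{AngLorLun}; your argument is a correct reconstruction of the proof given there, namely the Bernstein computation for $w_\tau=(|\nabla_xu|^2+\tau)^{1/2}$ (exactly the $k=1$, $p=1$ branch of the paper's Section \ref{sect-4}, legitimate here since $Q=Q(t)$ and $c\equiv 0$) followed by Gross's semigroup-interpolation argument for the entropy, and it yields the stated constant $C_p=p^2\Lambda_0/(2|r_0|)$. The only points left implicit are standard: the differentiation of $r\mapsto G(t,r)(\eta\log\eta)$ for unbounded coefficients should be justified through the approximating Dirichlet evolution operators on balls as in the proof of Theorem \ref{coco}, and the regularisation of $|f|$ should use $(f^2+\varepsilon^2)^{1/2}$ rather than $|f|+\varepsilon$ so as to stay in $C^1_b(\Rd)$, after which $\eta=G(r,s)f^p\ge\varepsilon^p$ (because $G(r,s)\mathds{1}=\mathds{1}$ when $c\equiv 0$) makes $\eta\log\eta$ harmless.
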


Under the assumptions of Theorem \ref{thm_log_sob}, {\it which we assume as standing assumptions in the rest of this subsection}, it can be proved a first summability improving result of the evolution operator $G(t,s)$.

\begin{theorem}[Theorem 4.1 of \cite{AngLorLun}]
\label{th-5.2}
Under the assumptions of Theorem $\ref{thm_log_sob}$
the evolution operator $G(t,s)$ is hypercontractive, i.e.,
for any $p,q\in (1,+\infty)$, with $p<q$, the operator $G(t,s)$ is a contraction from $L^p(\Rd,\mu_s)$ into $L^q(\Rd,\mu_t)$ if $t \ge s+\frac{\Lambda_0}{2\nu_0|r_0|}\log\left (\frac{q-1}{p-1}\right )$.
\end{theorem}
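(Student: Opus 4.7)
My plan is to adapt the classical Gross hypercontractivity argument to the nonautonomous setting, using the logarithmic Sobolev inequality of Theorem \ref{thm_log_sob} as the analytic input. By density of $C^\infty_b(\Rd)$ in $L^p(\Rd,\mu_s)$ and the $L^p$-contractivity of $G(t,s)$, I reduce to the case $f\in C^\infty_b(\Rd)$ with $f\ge\varepsilon>0$, so that the minimum principle gives $u(t,\cdot):=G(t,s)f\ge\varepsilon$. I then introduce the curve of exponents
\[
q(t):=1+(p-1)\exp\bigl(\alpha(t-s)\bigr),\qquad \alpha:=\frac{2\nu_0|r_0|}{\Lambda_0},
\]
chosen so that $q(s)=p$ and $q(t)=q$ precisely at the critical time in the statement, and set $\Phi(t):=\int_{\Rd}u(t,\cdot)^{q(t)}\,d\mu_t$. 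The theorem then reduces to showing that $\Phi(t)^{1/q(t)}=\|u(t,\cdot)\|_{L^{q(t)}(\Rd,\mu_t)}$ is non-increasing on $[s,+\infty)$.

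To compute $\Phi'(t)$ I would rely on the identity
\[
\frac{d}{dt}\int_{\Rd}\psi(t,\cdot)\,d\mu_t=\int_{\Rd}(D_t\psi-\A\psi)\,d\mu_t,
\]
valid for smooth enough $\psi$, which is a direct consequence of the invariance property \eqref{invariance} (differentiating it with $s$ and $h$ fixed and using $D_tG(t,s)h=\A G(t,s)h$) together with the absolute continuity of $\mu_t$ recorded in Remark \ref{rem-luca}. Specializing to $\psi=u^{q(t)}$, using $D_tu=\A u$ (since $c\equiv 0$) and the chain rule $\A(u^q)=qu^{q-1}\A u+q(q-1)u^{q-2}\langle Q\nabla_xu,\nabla_xu\rangle$, the first-order transport contributions cancel and I obtain
\[
\Phi'(t)=q'(t)\int_{\Rd}u^{q(t)}\log u\,d\mu_t-q(t)(q(t)-1)\int_{\Rd}u^{q(t)-2}\langle Q\nabla_xu,\nabla_xu\rangle\,d\mu_t.
\]

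The next step is to apply Theorem \ref{thm_log_sob} with $p=2$ at time $t$ to $g=u^{q(t)/2}$; recalling that the constant $C_2=2\Lambda_0/|r_0|$ does not depend on $t$, this gives
\[
q(t)\int_{\Rd}u^{q(t)}\log u\,d\mu_t\le\Phi(t)\log\Phi(t)+\frac{C_2 q(t)^2}{4}\int_{\Rd}u^{q(t)-2}|\nabla_xu|^2\,d\mu_t.
\]
Combining this with the uniform ellipticity bound $\langle Q\nabla_xu,\nabla_xu\rangle\ge\nu_0|\nabla_xu|^2$, the defining ODE $q'(t)=4\nu_0(q(t)-1)/C_2$ for $q$ is exactly what makes the coefficient of $\int u^{q(t)-2}|\nabla_xu|^2\,d\mu_t$ vanish, yielding the clean differential inequality
\[
\Phi'(t)\le\frac{q'(t)}{q(t)}\Phi(t)\log\Phi(t).
\]
Setting $F(t):=q(t)^{-1}\log\Phi(t)=\log\|u(t,\cdot)\|_{L^{q(t)}(\Rd,\mu_t)}$, a direct differentiation produces $F'(t)\le 0$, so $F$ is non-increasing; evaluating at the critical time yields the contraction bound $\|G(t,s)f\|_{L^q(\Rd,\mu_t)}\le\|f\|_{L^p(\Rd,\mu_s)}$.

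The principal technical obstacle is the rigorous justification of the derivative identity for $\int\psi(t,\cdot)\,d\mu_t$ and the differentiability of $\Phi$, which requires controlled integrability of the functions $u^{q(t)}$, $u^{q(t)}\log u$, $u^{q(t)-2}|\nabla_xu|^2$ and $u^{q(t)-1}\A u$ with respect to $\mu_t$, uniformly on compact time intervals. I would handle this by first truncating $f$ between $\varepsilon$ and some large $M$, exploiting the pointwise estimates of Section \ref{sect-4} together with the tightness of $\{\mu_t\}$ to obtain the uniform bounds, and then passing to the limit as $\varepsilon\to 0^+$, $M\to +\infty$, and finally extending to general $f\in L^p(\Rd,\mu_s)$ by density and Fatou's lemma.
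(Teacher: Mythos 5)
The paper cites this as Theorem~4.1 of \cite{AngLorLun} without reproducing the proof, so there is no internal argument to compare against; what you have written is the classical Gross hypercontractivity argument, which is in fact how that result is proved in the cited source. Your computation is correct: the formula
$\frac{d}{dt}\int_{\Rd}\psi(t,\cdot)\,d\mu_t=\int_{\Rd}(D_t\psi-\A\psi)\,d\mu_t$
follows from differentiating the invariance identity (equivalently, the Fokker--Planck equation $\partial_t\rho_t=-\A_t^*\rho_t$ for the densities of Remark~\ref{rem-luca}); the chain rule with $c\equiv 0$ gives the cancellation of the transport term; $C_2=2\Lambda_0/|r_0|$ is the right constant; and the ODE $q'=\alpha(q-1)$ with $\alpha=2\nu_0|r_0|/\Lambda_0=4\nu_0/C_2$ is exactly what annihilates the coefficient of $\int u^{q-2}|\nabla_xu|^2\,d\mu_t$ after one lower-bounds $\langle Q\nabla_xu,\nabla_xu\rangle\ge\nu_0|\nabla_xu|^2$. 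The reduction to $f\ge\varepsilon>0$ is legitimate since $G(t,s)\mathds{1}=\mathds{1}$ (as $c\equiv 0$) and $G(t,s)$ is positivity preserving, so $u\ge\varepsilon$.

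Two small points you should make explicit. First, your argument, as written, only yields the contraction at the critical time $t^*=s+\frac{\Lambda_0}{2\nu_0|r_0|}\log\frac{q-1}{p-1}$, where $q(t^*)=q$; the statement asks for all $t\ge t^*$. This is immediate from the $L^q$-contractivity of $G(t,t^*)$ from $L^q(\Rd,\mu_{t^*})$ into $L^q(\Rd,\mu_t)$ and the evolution law, but it should be said. Second, when you apply Theorem~\ref{thm_log_sob} to $g=u^{q(t)/2}$ you are implicitly using that $g\in W^{1,2}(\Rd,\mu_t)$; this is guaranteed by the truncation you describe (or, without truncation, by the fact that $u$ and $\nabla_x u$ are bounded for $t>s$, as in Theorem~\ref{C0-C3}, and $\mu_t$ is a probability measure), so the regularity side of the argument closes up as you outline.
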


It is also interesting to study some stronger summability improving properties of the evolution operator $G(t,s)$.
These stronger summability improving properties are:
\begin{itemize}
\item
{\it supercontractivity:} $G(t,s)$ is bounded from $L^p(\Rd,\mu_s)$ into $L^q(\Rd,\mu_t)$ for any $q>p>1$ and $t>s\in I$;
\item
{\it ultraboundedness:} $G(t,s)$ is bounded from $L^p(\Rd,\mu_s)$ into $C_b(\Rd)$ for any $p\in (1,+\infty)$ and $t>s\in I$;
\item
{\it ultracontractivity:} $G(t,s)$ is bounded from $L^1(\Rd,\mu_s)$ into $C_b(\Rd)$ for any $t>s\in I$;
\end{itemize}

The following theorem shows that the supercontractivity is equivalent to the occurrence of a one-parameter family of logarithmic Sobolev inequalities
and to an integrability property of the Gaussian functions $\varphi_\lambda:\Rd\to\R$, defined by $\varphi_\lambda(x):=e^{\lambda |x|^2}$ for any $x \in \Rd$ and $\lambda>0$ with respect to the measures $\mu_s$ ($s\in I$).

\begin{theorem}[Theorems 3.1 \& 3.7  of \cite{AngLorOnI}]
The following facts are equivalent.
\begin{enumerate}[\rm(i)]
\item
The evolution operator $G(t,s)$ is supercontractive.
\item
The inequality
\begin{align*}
&\int_{\Rd}|f|^p\log(|f|)d\mu_s-\|f\|_{L^p(\Rd,\mu_s)}^p\log(\|f\|_{L^p(\Rd,\mu_s)})\\
\le &\varepsilon\frac{p}{2}\int_{\Rd}|f|^{p-2}|\nabla f|^2d\mu_s+\frac{2\beta(\varepsilon)}{p}\|f\|^p_{L^p(\Rd,\mu_s)}
\end{align*}
holds true for every $f\in W^{1,p}(\Rd, \mu_s)$, $s\in I$, $p>1$, $\varepsilon>0$ and some positive decreasing function $\beta:(0,+\infty)\to (0,+\infty)$ which blows up as $\varepsilon\to 0^+$.
\item
The function $\varphi_\lambda$ belongs to $L^1(\Rd, \mu_s)$ for any $\lambda>0$ and $s\in I$. Moreover, $\sup\{\Vert \varphi_{\lambda}\Vert_{L^1(\Rd, \mu_s)}:s\in I\}<+\infty$ for any $\lambda>0$.
\end{enumerate}
\end{theorem}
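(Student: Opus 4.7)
The plan is to establish the cycle $(ii)\Rightarrow(i)\Rightarrow(iii)\Rightarrow(ii)$, which separates the analytic heart (a nonautonomous Gross-type argument), the simple tail-extraction (Herbst-type testing), and the reconstruction from Gaussian integrability.

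\textbf{Step 1: $(ii)\Rightarrow(i)$ via a Gross-Nelson differential inequality.} Fix $s<t$, $1<p<q$, a nonnegative smooth $f$ (by density it suffices to treat these), and choose a $C^1$ function $q(\cdot):[s,t]\to[p,q]$ with $q(s)=p$, $q(t)=q$, $q'>0$. Write $u(r,\cdot)=G(r,s)f$ and $\Phi(r)=\int u^{q(r)}\,d\mu_r$. The key kinematic identity is obtained by differentiating the invariance property $\int G(r,s)g\,d\mu_r=\int g\,d\mu_s$, which yields, for any smooth time-dependent test $v$,
\begin{align*}
\frac{d}{dr}\int v(r,\cdot)\,d\mu_r=\int (D_rv-\A v)\,d\mu_r.
\end{align*}
Applying this with $v=u^{q(r)}$, using $D_ru=\A u$ and expanding $\A(u^q)=qu^{q-1}\A u+q(q-1)u^{q-2}\langle Q\nabla u,\nabla u\rangle$, the two $\int u^{q-1}\A u\,d\mu_r$ terms cancel, leaving the clean identity
\begin{align*}
\Phi'(r)=q'(r)\int u^{q}\log u\,d\mu_r-q(q-1)\int u^{q-2}\langle Q\nabla u,\nabla u\rangle\,d\mu_r.
\end{align*}
Then I apply $(ii)$ with $p=2$ to $g=u^{q/2}$ (so $|\nabla g|^2=(q/2)^2u^{q-2}|\nabla u|^2$), use ellipticity $\langle Q\nabla u,\nabla u\rangle\ge\nu_0|\nabla u|^2$, and choose $\varepsilon(r)=2(q-1)\nu_0/q'$ so that the gradient contributions cancel exactly. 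A direct computation then shows that $\Psi(r):=q(r)^{-1}\log\Phi(r)$ satisfies $\Psi'(r)\le 2q'(r)\beta(\varepsilon(r))/q(r)^{2}$. Integrating from $s$ to $t$ and picking any admissible profile $q(\cdot)$ (say linear) gives the required contraction $\|G(t,s)f\|_{L^{q}(\mu_t)}\le C(p,q,t-s)\|f\|_{L^{p}(\mu_s)}$, i.e.\ supercontractivity.

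\textbf{Step 2: $(i)\Rightarrow(iii)$.} By $(i)$, for each $\lambda>0$ one can write, for any $t>s$ and any $q>p>1$, $\|G(t,s)\mathds{1}\|_{L^q(\mu_t)}\le M(p,q,t-s)$, so that $\int \rho(t,y)^{q}dy$ is controlled by a power of $\int\rho(t,y)^{p}dy$ (using $d\mu_r=\rho(r,\cdot)dx$ from Remark \ref{rem-luca}). Combining this with the kernel estimates for $g$ cited from \cite{KLR,KLR-1}, which provide a Gaussian lower bound on $g(t,s,x,y)$ on bounded regions, yields pointwise-to-integral control of $\rho(s,\cdot)$ by the $L^q(\mu_t)$-norms. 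More directly, one tests $(i)$ against an approximating sequence converging to $\varphi_\lambda$ or, equivalently, exploits the duality $\|G(t,s)^\ast\|_{L^{q'}(\mu_t)\to L^{p'}(\mu_s)}=\|G(t,s)\|_{L^p(\mu_s)\to L^q(\mu_t)}$ to transfer Gaussian-weighted integrability from one measure to the other, together with an explicit large-ball comparison of $\mu_s$ and $\mu_t$ coming from the tightness. Uniformity in $s$ follows from the uniform bound on $\|G(t,s)\|_{L^p(\mu_s)\to L^q(\mu_t)}$ on bounded $t-s$ intervals and the evolution law.

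\textbf{Step 3: $(iii)\Rightarrow(ii)$, Herbst combined with Poincar\'e.} To recover $(ii)$ I first exploit the Poincar\'e inequality in $L^2(\Rd,\mu_s)$ (valid under Hypothesis \ref{hyp-2}; see the discussion after Theorem \ref{thm_log_sob} and Remark \ref{rem-Poi}) to obtain a \emph{defective} logarithmic Sobolev inequality, namely the inequality in $(ii)$ but with an additive constant depending on $\lambda$ that encodes Gaussian moments $\int e^{\lambda|x|^2}d\mu_s$. The Rothaus/Deuschel--Stroock trick then upgrades it to a tight inequality: write $f=\overline f_s+(f-\overline f_s)$, apply the defective inequality to the centred part and use Poincar\'e to absorb the defect, producing a family of tight bounds parametrized by $\varepsilon$ with $\beta(\varepsilon)$ blowing up as $\varepsilon\to 0^+$ and decreasing as $\varepsilon\to+\infty$. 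The uniformity of $\sup_s\|\varphi_\lambda\|_{L^1(\mu_s)}$ supplied by $(iii)$ guarantees that the function $\beta$ can be chosen independent of $s$.

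\textbf{Expected obstacles.} Step 1 is mechanical once the kinematic identity $\frac{d}{dr}\int v\,d\mu_r=\int(D_rv-\A v)d\mu_r$ is rigorously justified for $v=u^{q(r)}$; this justification is the only technical nuisance and requires either cutting off $u$ near $0$ and near infinity (to make $u^{q}$ and its logarithm admissible) or approximating $f$ by $f+\delta$ as in the proofs of Theorems \ref{cucu} and \ref{coco}, and passing to the limit using the pointwise gradient estimate \eqref{stimapoint}. The genuine hard part is Step 3: the upgrade from the defective to the tight family of log-Sobolev inequalities with an $s$-independent $\beta$ is delicate, because the measures $\mu_s$ vary with $s$ and the Poincar\'e constant a priori depends on $s$; controlling this dependence (via the iteration of Remark \ref{rem-Poi} adapted from \cite{AngLorLun}) is what couples the three steps into a genuine equivalence rather than a one-sided implication.
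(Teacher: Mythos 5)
The paper does not prove this theorem; it is stated as a quotation from Theorems 3.1 and 3.7 of \cite{AngLorOnI} with no argument in the text, so there is no in-paper proof to compare against. Judged on its own terms, your cycle $(ii)\Rightarrow(i)\Rightarrow(iii)\Rightarrow(ii)$ has the right overall shape but contains genuine gaps.

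Step 1 ($(ii)\Rightarrow(i)$) is the standard nonautonomous Gross--Nelson argument, and the kinematic identity $\frac{d}{dr}\int v(r,\cdot)\,d\mu_r=\int(D_rv-\A v)\,d\mu_r$ is formally correct (it follows by differentiating $\int G(t,r)g\,d\mu_t=\int g\,d\mu_r$ using the backward equation for $G(t,r)$). This step would go through once the usual truncation/approximation issues for $v=u^{q(r)}$ are handled, which you flag.

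Step 2 ($(i)\Rightarrow(iii)$) has a real gap. Testing supercontractivity against $\mathds{1}$ gives no information: since $c\equiv 0$ one has $G(t,s)\mathds{1}=\mathds{1}$, and each $\mu_t$ is a probability measure, so $\|G(t,s)\mathds{1}\|_{L^q(\Rd,\mu_t)}=1$ trivially. The claimed control of $\int\rho(t,\cdot)^q\,dx$ by a power of $\int\rho(t,\cdot)^p\,dx$ therefore does not follow. The two alternatives you then sketch (duality, kernel lower bounds) are not developed to the point of being checkable; in particular, to produce $\int\varphi_\lambda\,d\mu_s<+\infty$ \emph{uniformly in $s$} one must feed genuinely Gaussian-growing test functions through $G(t,s)$, which requires a careful truncation and a way to pass to the limit using tightness. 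As written this step is not a proof.

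Step 3 ($(iii)\Rightarrow(ii)$) has its logic inverted. The Poincar\'e inequality by itself does not yield a defective logarithmic Sobolev inequality; rather, it is the Gaussian integrability of $(iii)$ (together with the pointwise gradient estimate \eqref{stimapoint}, valid under the standing assumptions of this subsection) that produces a family of defective inequalities, and Poincar\'e is then used to tighten them via the Rothaus--Deuschel--Stroock mechanism. You have the correct ingredients but present them in the wrong causal order, so the mechanism producing the defect is missing. You correctly identify the $s$-uniformity of $\beta$ as the delicate point, but leave it unresolved.
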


On the other hand, the ultraboundedness can be characterized as follows.

\begin{theorem}[Theorem 4.5 of \cite{AngLorOnI}]
The evolution operator $G(t,s)$ is ultrabounded, if and only if
for every $\lambda>0$ and $t>s$ the function $G(t,s)\varphi_{\lambda}$ belongs to $C_b(\Rd)$ and, for any $\delta, \lambda>0$, there exists a positive constant $K_{\delta, \lambda}$ such that $\|G(t,s)\varphi_\lambda\|_{\infty}\le K_{\delta, \lambda}$ for any $t>s\in I$.
\end{theorem}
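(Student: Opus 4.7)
The plan is to prove the two implications by combining the semigroup decomposition $G(t,s)=G(t,s+\delta/2)G(s+\delta/2,s)$ with the $L^{\infty}$-contractivity of $G(t,s)$ (available because $c\equiv 0$, by \eqref{reality}) and with the H\"older-type pointwise inequality \eqref{fg}.

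For the necessity $(\Rightarrow)$, ultraboundedness is strictly stronger than supercontractivity (since $\mu_t(\Rd)=1$, one has $C_b(\Rd)\hookrightarrow L^q(\Rd,\mu_t)$ for every $q$), so the previous theorem yields $\varphi_{\lambda}\in L^1(\Rd,\mu_s)$ for every $\lambda>0$, $s\in I$, with a bound uniform in $s$; using $\varphi_{\lambda}^p=\varphi_{p\lambda}$, the same holds in $L^p(\Rd,\mu_s)$ for every $p\ge 1$. Ultraboundedness then immediately gives $G(t,s)\varphi_{\lambda}\in C_b(\Rd)$. For the uniform estimate in $s$, the decomposition reduces the task to bounding $\|G(s+\delta/2,s)\varphi_{\lambda}\|_{\infty}$ uniformly in $s$, and this follows from ultraboundedness of $G(s+\delta/2,s):L^2(\Rd,\mu_s)\to C_b(\Rd)$ together with $\|\varphi_{\lambda}\|_{L^2(\Rd,\mu_s)}=\|\varphi_{2\lambda}\|_{L^1(\Rd,\mu_s)}^{1/2}\le M_{\lambda}$; the $s$-uniformity of the operator norm is built into the time-uniform standing hypotheses of Theorem \ref{thm_log_sob}.

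For the sufficiency $(\Leftarrow)$, the goal is to prove that $G(t,s):L^p(\Rd,\mu_s)\to C_b(\Rd)$ is bounded for every $p\in(1,+\infty)$. Fixing $p$ with conjugate exponent $q$ and applying \eqref{fg} to the factorisation $|f|=(|f|\varphi_{-\lambda/q})\cdot\varphi_{\lambda/q}$, one obtains, for $t-s\ge\delta$,
\begin{eqnarray*}
(G(t,s)|f|)(x)\le K_{\delta,\lambda}^{1/q}\,[G(t,s)(|f|^p\varphi_{-\lambda p/q})]^{1/p}(x).
\end{eqnarray*}
Writing $d\mu_s=\rho(s,\cdot)dy$ from Remark \ref{rem-luca}, the right-hand side becomes $K_{\delta,\lambda}^{1/q}\bigl(\int_{\Rd}|f(y)|^p\varphi_{-\lambda p/q}(y)g(t,s,x,y)\rho(s,y)^{-1}d\mu_s(y)\bigr)^{1/p}$, so the proof reduces to the pointwise estimate $\varphi_{-\lambda p/q}(y)g(t,s,x,y)\le D\,\rho(s,y)$, uniformly in $x,y,s$. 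Choosing $\lambda$ large enough, this bound follows by combining the Gaussian upper bound on the Green kernel $g$ (which exploits the dissipativity of the drift and the boundedness of $Q$ in the standing hypotheses of Theorem \ref{thm_log_sob}, or alternatively the Gaussian tails of $g$ implied by the hypothesis on $\varphi_\lambda$ itself) with a Gaussian lower bound on $\rho$ of the form $\rho(s,y)\gtrsim e^{-c|y|^2}$. Substituting this yields $\|G(t,s)f\|_{\infty}\le K_{\delta,\lambda}^{1/q}D^{1/p}\|f\|_{L^p(\Rd,\mu_s)}$, as required.

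The main obstacle is precisely this last step of $(\Leftarrow)$, i.e., passing from the integral hypothesis $\|G(t,s)\varphi_{\lambda}\|_{\infty}\le K_{\delta,\lambda}$ to a pointwise kernel bound $g(t,s,x,y)\le D\,\rho(s,y)e^{\lambda|y|^2}$ that is uniform in $s$. The argument rests on two-sided Gaussian estimates for $g$ and $\rho$, whose explicit derivation under the standing assumptions forms the technical heart of the proof; should such estimates not be readily available, a less direct alternative is to iterate the Cauchy--Schwarz estimate above along a subdivision of $[s,t]$, using at each step the $L^1$-contractivity of $G$ inherited from the invariance identity to keep the constants under control as one progressively kills the $\|f\|_{\infty}$ factor that appears after a single application of \eqref{fg}.
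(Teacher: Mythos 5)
A preliminary remark on scope: this survey does not prove the statement at all — it quotes it verbatim as Theorem 4.5 of \cite{AngLorOnI} — so your argument can only be judged on its own terms. The necessity direction is essentially sound: since each $\mu_t$ is a probability measure, $C_b(\Rd)\hookrightarrow L^q(\Rd,\mu_t)$ for every $q$, so ultraboundedness implies supercontractivity; the preceding characterization then gives $\varphi_\lambda\in L^1(\Rd,\mu_s)$ uniformly in $s$, the identity $\varphi_\lambda^p=\varphi_{p\lambda}$ upgrades this to $L^p$, and the $L^\infty$-contractivity of $G(t,s+\delta)$ (valid because $c\equiv 0$) reduces the uniform bound to the single operator $G(s+\delta,s)$. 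This is fine, granting (as you explicitly do) that the definition of ultraboundedness carries an operator-norm bound depending only on $t-s$.

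The sufficiency direction, however, has a fatal gap, precisely at the step you yourself identify as the technical heart. You reduce the claim to the pointwise kernel bound $\varphi_{-\lambda p/q}(y)\,g(t,s,x,y)\le D\,\rho(s,y)$ and propose to derive it from a Gaussian lower bound $\rho(s,y)\gtrsim e^{-c|y|^2}$. Such a lower bound is \emph{incompatible with the hypotheses of the theorem}: the assumed bound $\|G(t,s)\varphi_\lambda\|_\infty\le K_{\delta,\lambda}$, combined with the invariance property \eqref{invariance} (extended to $\varphi_\lambda$ by monotone convergence), forces $\int_{\Rd}\varphi_\lambda\,d\mu_s\le K_{\delta,\lambda}<+\infty$ for \emph{every} $\lambda>0$, whereas $\rho(s,y)\ge Ce^{-c|y|^2}$ would make $\int_{\Rd}e^{\lambda|y|^2}\rho(s,y)\,dy$ diverge as soon as $\lambda\ge c$. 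In the regime where the theorem has content the densities $\rho(s,\cdot)$ decay faster than every Gaussian (for the model drift $b(x)=-x|x|^{\varepsilon}$ of Example \ref{ex-PRS} the invariant density behaves like $e^{-|x|^{2+\varepsilon}/(2+\varepsilon)}$), so the inequality you need is really a two-sided comparison between $g(t,s,x,\cdot)$ and $\rho(s,\cdot)$ of intrinsic-ultracontractivity type — essentially equivalent to the conclusion, and certainly not available as a black box under the standing assumptions (the kernel bounds of \cite{KLR,KLR-1} are not Gaussian for unbounded drifts). The fallback you sketch (iterating \eqref{fg} along a subdivision of $[s,t]$) is not carried out and does not obviously close: after each application of \eqref{fg} the first factor $[G(|f|^p\varphi_{-\lambda p/q})]^{1/p}$ is still only controlled through $\|f\|_\infty$, never through $\|f\|_{L^p(\Rd,\mu_s)}$. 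What is missing is a genuine mechanism for trading the sup norm of $G(r,s)f$ for its $L^p(\Rd,\mu_s)$-norm at the price of a factor $e^{\varepsilon|y|^2}$, i.e.\ a weighted pointwise bound of the type $|G(r,s)f(y)|\le C_\varepsilon e^{\varepsilon|y|^2}\|f\|_{L^p(\Rd,\mu_s)}$; without it the implication $(\Leftarrow)$ is not proved.
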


\begin{remark}{\rm
\begin{enumerate}[\rm(i)]
\item
A sufficient condition for the supercontractivity of the evolution operator $G(t,s)$ is the existence of a positive constant $K$ such that $\langle b(t,x),x\rangle\le  -K|x|^2\log|x|$ for any $t \in I$ and $x$ large enough. This condition is quite sharper. Indeed, the autonomous operator
$(\mathcal{A}\zeta)(x)=\Delta \zeta(x)-\langle x,\nabla \zeta(x)\rangle$ does not satisfy it
and it is well known that the associated
Ornstein-Uhlenbeck semigroup is not supercontractive with respect to the Gaussian invariant
measure $d\mu(x)=(2\pi)^{-d/2}e^{-|x|^2/2}dx$ as proved in \cite{Nelson}.
\item
In order to prove that the evolution operator $G(t,s)$ is ultrabounded it suffices to assume that there exist $K_1>0$ and $\alpha>1$ such that $\langle b(t,x),x\rangle\le  -K_1|x|^2(\log|x|)^\alpha$ for any $t \in I$ and $x$ large enough.
Also this condition is rather sharp. Indeed in \cite{KavKerRoy93Que}, the authors show that
the semigroup associated with the operator
$\mathcal{A}=\Delta-\langle \nabla\Phi,\nabla \rangle$ is not ultrabounded in the $L^p$-spaces related to the invariant measure
$d\mu=\|e^{-\Phi}\|^{-1}_{1}e^{-\Phi}dx$, if $\Phi(x)\sim |x|^2\log|x|$.
\end{enumerate}
}\end{remark}

An equivalent characterization of the ultracontractivity is not available in the literature, at the best of our knowledge. On the other hand a sufficient condition is given
by the following theorem.

\begin{theorem}
Suppose that $\langle b(t,x),x\rangle \le -K_2|x|^\gamma$ for any $t \in I$, $|x|\ge R$ and some positive constants $K_2$, $R$ and $\gamma>2$.
Then, the evolution operator $G(t,s)$ is ultracontractive.
\end{theorem}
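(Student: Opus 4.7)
The proof proceeds by bootstrapping from ultraboundedness, which is already available from the preceding theorem. Indeed, for any $\alpha>1$ one has $K_2|x|^\gamma\ge K_1|x|^2(\log|x|)^\alpha$ when $|x|$ is sufficiently large, so the assumption of the theorem is strictly stronger than the sufficient condition for ultraboundedness recorded in the remark just above the statement. Hence, for every $p\in(1,+\infty)$ and every $\delta>0$ there exists $M_{p,\delta}>0$ such that
\begin{equation*}
\|G(t,s)f\|_\infty\le M_{p,\delta}\|f\|_{L^p(\Rd,\mu_s)},\qquad t-s\ge \delta,\ f\in L^p(\Rd,\mu_s).
\end{equation*}

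To promote this to an $L^1$-to-$L^\infty$ bound, I would split $G(t,s)=G(t,\tau)G(\tau,s)$ with $\tau=(s+t)/2$ and prove that $G(\tau,s)$ is bounded from $L^1(\Rd,\mu_s)$ into $L^p(\Rd,\mu_\tau)$ for some $p>1$ depending on $t-s$. By Remark \ref{rem-luca} together with the kernel representation \eqref{kernel}, this reduces to a pointwise bound on the ratio $g(\tau,s,x,y)/\rho(s,y)$ of the Green kernel against the density of $\mu_s$, which I would extract from the Gaussian upper bounds on $g$ proved in \cite{KLR,KLR-1} combined with a matching lower bound on $\rho(s,\cdot)$. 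The essential input is a super-Gaussian integrability of the measures $\mu_s$, namely
\begin{equation*}
\sup_{s\in I}\int_{\Rd}e^{\eta|x|^\beta}\,d\mu_s(x)<+\infty\qquad\text{for every }\eta>0\text{ and some }\beta>2,
\end{equation*}
which follows from the invariance property and a Lyapunov-function argument: under the hypothesis, the test function $\varphi(x)=e^{\eta|x|^\beta}$ with $\beta\in(2,\gamma]$ satisfies the Hypothesis \ref{hyp-2} inequality $\A\varphi\le a_1-a_2\varphi$ on a half-line, since $\langle b(t,x),x\rangle\le -K_2|x|^\gamma$ dominates all quadratic terms produced by applying $\A$ to $\varphi$ whenever $\beta\le\gamma$.

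The main obstacle is obtaining the lower bound on $\rho(s,y)$ that matches the Gaussian decay in $y$ of $g(\tau,s,x,y)$: without such a matching estimate the ratio $g/\rho$ need not yield an $L^p(\mu_\tau)$ function uniformly in $y$, and the $L^1\to L^p$ step fails. The strict inequality $\gamma>2$ is precisely what makes room between the Gaussian behavior of the kernel in the spatial variable and the super-Gaussian concentration of the measures $\mu_s$; in the borderline case $\gamma=2$ (for instance, for the Ornstein--Uhlenbeck semigroup with Gaussian invariant measure) both sides are of comparable Gaussian order and ultracontractivity genuinely fails. Finally, since the decomposition $G(t,s)=G(t,\tau)G(\tau,s)$ works for every $t>s$, the combination of the $L^1\to L^p$ smoothing of $G(\tau,s)$ with the $L^p\to C_b$ boundedness of $G(t,\tau)$ yields the desired ultracontractive estimate $\|G(t,s)f\|_\infty\le C(t-s)\|f\|_{L^1(\Rd,\mu_s)}$ for every $t>s\in I$ and $f\in L^1(\Rd,\mu_s)$.
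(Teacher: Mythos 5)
Your opening reduction is sound: since $\gamma>2$ one has $K_2|x|^\gamma\ge K_1|x|^2(\log|x|)^\alpha$ for $|x|$ large, so the hypothesis implies the sufficient condition for ultraboundedness recorded in the remark preceding the statement, and the factorization $G(t,s)=G(t,\tau)G(\tau,s)$ with an $L^1\to L^p$ step followed by an $L^p\to C_b(\Rd)$ step is the right skeleton. (For the record, the paper states this theorem without proof -- it is quoted from the literature, essentially \cite{AngLorOnI} -- so there is no internal argument to compare yours with.)

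The genuine gap is the $L^1(\Rd,\mu_s)\to L^p(\Rd,\mu_\tau)$ smoothing of $G(\tau,s)$, which is the only step carrying the content of the theorem, and the mechanism you propose for it cannot work as described. Writing, via \eqref{kernel} and Remark \ref{rem-luca}, $(G(\tau,s)f)(x)=\int_{\Rd}g(\tau,s,x,y)\rho(s,y)^{-1}f(y)\,d\mu_s(y)$, you need to control the ratio $g(\tau,s,x,y)/\rho(s,y)$ uniformly in $y$, and you propose to do so from a \emph{Gaussian} upper bound on $g$ together with a ``matching'' Gaussian lower bound on $\rho(s,\cdot)$. But your own Lyapunov computation in the same paragraph shows $\sup_{s\in I}\int_{\Rd}e^{\eta|x|^\beta}d\mu_s<+\infty$ for some $\beta>2$ and every $\eta>0$, which is incompatible with any lower bound of the form $\rho(s,y)\ge c\,e^{-C|y|^2}$: under the hypothesis the densities decay \emph{super}-Gaussianly. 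Consequently, with only a Gaussian upper bound on the kernel, the quotient $g(\tau,s,x,y)/\rho(s,y)$ blows up as $|y|\to+\infty$ instead of staying bounded; the heuristic about ``room between the Gaussian behaviour of the kernel and the super-Gaussian concentration of the measures'' points in exactly the wrong direction for this ratio. What is actually required is that $y\mapsto g(\tau,s,x,y)$ itself decay at least as fast as $\rho(s,\cdot)$, i.e.\ super-Gaussian, Lyapunov-weighted kernel bounds of the type in \cite{KLR,KLR-1} (not plain Gaussian ones); alternatively one can avoid kernels altogether and run the Davies--Gross scheme, upgrading the $\varepsilon$-family of logarithmic Sobolev inequalities from the supercontractivity characterization to one whose defect $\beta(\varepsilon)$ is suitably integrable as $\varepsilon\to 0^+$ -- this is precisely where $\gamma>2$, as opposed to the Ornstein--Uhlenbeck borderline $\gamma=2$, enters quantitatively. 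As it stands, your text is a strategy whose central estimate is missing and whose supporting heuristic contradicts the super-Gaussian integrability you correctly establish two sentences earlier.
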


\subsection{Long-time behaviour of $G(t,s)f$}

This last subsection is devoted to present some result on the asymptotic behaviour of $G(t,s)f$ as $t\to +\infty$.
As the proof of Proposition \ref{norma_p} shows,  $G(t,s)f$ converges to $\overline f_s$ locally uniformly in $\Rd$ as $t\to +\infty$ for any $f\in C^1_b(\Rd)$,
provided that the gradient estimate \eqref{around} is satisfied. In such a case, one can also infer that
$\|G(t,s)f-\overline f_s\|_{L^p(\Rd,\mu_s)}$ vanishes as $t\to +\infty$ for any $f\in L^p(\Rd,\mu_s)$ and $s\in I$, using the above local uniform convergence, the density
of $C^1_b(\Rd)$ into $L^p(\Rd,\mu_s)$ and the uniform boundedness (with respect to $s$ and $t$ of $\|G(t,s)\|_{L(L^p(\Rd,\mu_s),L^p(\Rd,\mu_t))}$
and of the operator $f\mapsto \overline{f}_s$ from $L^p(\Rd,\mu_s)$ into $\R$.

Actually we can be more precise on the decay rate to $0$ of the previous norm when some additional conditions are satisfied.
For any $p\in [1,+\infty)$, we introduce the sets  $\mathfrak{A}_p$ and $\mathfrak{B}_p$ defined as follows:
\begin{itemize}
\item
$\mathfrak{A}_p$ is the set of all $\omega\in\R$ such that
\begin{align*}
\|G(t,s)f-m_s(f)\|_{L^p(\Rd,\mu_t)}\le M_{p,\omega}e^{\omega (t-s)}\|f\|_{L^p(\Rd,\mu_s)}
\end{align*}
for any $f\in L^p(\Rd,\mu_s)$, any $I\ni s<t$ and some positive constant $M_{p,\omega}$;
\item
$\mathfrak{B}_p$ is the set of all $\omega\in\R$ such that
\begin{align*}
\|\nabla_x G(t,s)f\|_{L^p(\Rd,\mu_t)}\le N_{p,\omega}e^{\omega(t-s)}\|f\|_{L^p(\Rd,\mu_s)}
\end{align*}
for any $f\in L^p(\Rd,\mu_s)$, any $I\ni s<t$, such that $t-s\ge 1$, and some positive constant $N_{p,\omega}$.
\end{itemize}

\begin{theorem}[Theorem 5.3 of \cite{AngLorLun}]\label{Asy}
The following facts are true:
\begin{enumerate}[\rm (i)]
\item
suppose that $G(t,s)$ is bounded from $L^{p_0}(\Rd,\mu_s)$ into $W^{1,p_0}(\Rd,\mu_t)$ and
$$\|\nabla_xG(t,s)f\|_{L^{p_0}(\Rd,\mu_t)}\le C_1(t-s)\|f\|_{L^{p_0}(\Rd,\mu_s)}$$ for any $f\in L^{p_0}(\Rd,\mu_s)$, $t>s\in I$,
some $p_0\in (1,+\infty)$ and a positive function $C_1:(0,+\infty)\to (0,+\infty)$. Then, $\mathfrak{A}_{p_0}\subset {\mathfrak B}_{p_0}$.
\item
if the evolution operator $G(t,s)$ is hypercontractive, then the sets $\mathfrak{A}_p$ is independent of $p\in (1,+\infty)$;
\item
if the evolution operator $G(t,s)$ is hypercontractive and
$G(t,s)$ is bounded from $L^p(\Rd,\mu_s)$ into $W^{1,p}(\Rd,\mu_t)$ and
$$\|\nabla_xG(t,s)f\|_{L^p(\Rd,\mu_t)}\le C_2(t-s)\|G(t,s)|\nabla f|\|_{L^p(\Rd,\mu_s)}$$ for\footnote{In view of Theorem \ref{teo-sharzan},
this condition is satisfied if $r_0\nu^{-\gamma}$ diverges to $-\infty$ as $|x|\to +\infty$, uniformly with respect to $t\in I$.}
any $f\in L^p(\Rd,\mu_s)$, any $p\in (1,+\infty)$ and
some positive and locally bounded function $C_2:(0,+\infty)\to (0,+\infty)$, then the sets $\mathfrak{B}_p$ are independent of $p\in (1,+\infty)$;
\item
if the assumptions in $(i)$ are satisfied for any $p\in (1,+\infty)$ as well as the assumptions in $(iii)$ and, in addition, the Poincar\'e inequality holds true,
then $\mathfrak{A}_p=\mathfrak{B}_p$, for any $p\in (1,+\infty)$.
\end{enumerate}
\end{theorem}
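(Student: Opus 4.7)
The overall plan is to exploit three fixed ingredients throughout: the standing assumption $c\equiv 0$, which guarantees $G(t,s)\mathds{1}=\mathds{1}$ and, consequently, $G(t,r)[m_s(f)]=m_s(f)$ for every $r\ge s$; the invariance property \eqref{invariance}, which yields $\overline{G(t,s)f}_t=m_s(f)$; and the fact that each $\mu_t$ is a probability measure, so that $L^p(\Rd,\mu_t)\hookrightarrow L^q(\Rd,\mu_t)$ for $q\le p$ and $|m_s(f)|\le\|f\|_{L^1(\Rd,\mu_s)}\le\|f\|_{L^p(\Rd,\mu_s)}$ for every $p\ge 1$.

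\emph{Part (i).} Given $\omega\in\mathfrak{A}_{p_0}$ and $t-s\ge 1$, I would split $G(t,s)=G(t,t-1)G(t-1,s)$. Since $m_s(f)$ is a constant in $x$ its gradient vanishes and $G(t,t-1)[m_s(f)]=m_s(f)$, so that
\begin{equation*}
\nabla_xG(t,s)f=\nabla_xG(t,t-1)[G(t-1,s)f-m_s(f)].
\end{equation*}
The assumed gradient bound on the unit interval $[t-1,t]$ combined with the defining inequality of $\mathfrak{A}_{p_0}$ then yields
\begin{equation*}
\|\nabla_xG(t,s)f\|_{L^{p_0}(\Rd,\mu_t)}\le C_1(1)M_{p_0,\omega}e^{\omega(t-1-s)}\|f\|_{L^{p_0}(\Rd,\mu_s)},
\end{equation*}
proving $\omega\in\mathfrak{B}_{p_0}$ with $N_{p_0,\omega}=C_1(1)M_{p_0,\omega}e^{-\omega}$.

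\emph{Parts (ii) and (iii).} Given $1<p<q<\infty$, by Theorem~\ref{th-5.2} there exists $\tau_0=\tau_0(p,q)>0$, independent of the base time, such that $G(t,t-\tau_0)$ is a contraction from $L^p(\Rd,\mu_{t-\tau_0})$ to $L^q(\Rd,\mu_t)$ and, symmetrically, $G(s+\tau_0,s)$ is a contraction from $L^p(\Rd,\mu_s)$ to $L^q(\Rd,\mu_{s+\tau_0})$. To promote $\mathfrak{A}_p$ to $\mathfrak{A}_q$ when $q>p$, for $t-s\ge\tau_0$ I would write $G(t,s)f-m_s(f)=G(t,t-\tau_0)[G(t-\tau_0,s)f-m_s(f)]$ and chain the outer hypercontractivity with the $\mathfrak{A}_p$-bound, closing via $\|f\|_{L^p(\mu_s)}\le\|f\|_{L^q(\mu_s)}$. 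In the opposite direction $q<p$, I would split $G(t,s)=G(t,s+\tau_0)G(s+\tau_0,s)$, first embed $\|\cdot\|_{L^q(\mu_t)}\le\|\cdot\|_{L^p(\mu_t)}$, apply the $\mathfrak{A}_p$-bound on the outer factor, and control $\|G(s+\tau_0,s)f-m_s(f)\|_{L^p(\mu_{s+\tau_0})}$ by the triangle inequality combined with hypercontractivity and the bound $|m_s(f)|\le\|f\|_{L^q(\mu_s)}$. The short range $t-s<\tau_0$ is absorbed into the multiplicative constant via bare $L^p$-contractivity. Part (iii) is handled by the same two splittings with $\nabla_x$ inserted in front of the outermost evolution factor: the cross-$p$ transfer of $\nabla_xG(\cdot,\cdot)$ follows from its defining hypothesis, passing from $\|\nabla_xG(t,t-\tau_0)g\|_{L^q(\mu_t)}$ to an expression controlled by $G(t,t-\tau_0)|\nabla g|$, after which hypercontractivity applied to $|\nabla g|$ yields the $L^p(\mu_{t-\tau_0})$-norm.

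\emph{Part (iv).} Since (i) is assumed to hold for every $p\in(1,+\infty)$, its conclusion gives directly $\mathfrak{A}_p\subset\mathfrak{B}_p$. For the reverse inclusion I would apply the Poincar\'e inequality --- valid for every $p\in(1,+\infty)$ by Remark~\ref{rem-Poi} --- to the function $G(t,s)f$ at time $t$: since $\overline{G(t,s)f}_t=m_s(f)$ and $\omega\in\mathfrak{B}_p$,
\begin{equation*}
\|G(t,s)f-m_s(f)\|_{L^p(\Rd,\mu_t)}\le\widetilde{C}_p\|\nabla_xG(t,s)f\|_{L^p(\Rd,\mu_t)}\le\widetilde{C}_pN_{p,\omega}e^{\omega(t-s)}\|f\|_{L^p(\Rd,\mu_s)}
\end{equation*}
for $t-s\ge 1$, while the range $t-s<1$ is absorbed by $L^p$-contractivity. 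The principal technical obstacle, common to (ii)--(iv), is the book-keeping of the short-time regime, where neither hypercontractivity nor the Poincar\'e--gradient inequalities are directly available; in every case the trivial bound from $L^p$-contractivity together with an adjustment of the multiplicative constant closes the estimate in the target form $e^{\omega(t-s)}\|f\|_{L^p(\mu_s)}$.
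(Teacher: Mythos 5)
Your parts (i)--(iii) follow essentially the same route as the paper: split $G(t,s)$ into an outer short-time (for (i)) or hypercontractivity-time (for (ii), (iii)) factor, transfer the $\mathfrak{A}$- or $\mathfrak{B}$-membership across the inner factor, and absorb the short-range $t-s$ into the constant via bare contractivity. In (i), you slightly improve on the paper by writing $\nabla_x G(t,s)f=\nabla_x G(t,t-1)[G(t-1,s)f-m_s(f)]$ at once, rather than first treating $\overline{f}_s=0$ and then paying an extra factor of $2$; this is cosmetic but clean. In (ii), your handling of the direction $q<p$ via the triangle inequality and $|m_s(f)|\le \|f\|_{L^q(\mu_s)}$ is a small detour compared to the paper, which notes $\overline{(G(s+\tau,s)f)}_{s+\tau}=\overline{f}_s$ and applies the $\mathfrak{A}_{p_2}$-bound directly to $g=G(s+\tau,s)f$; again harmless.

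Part (iv) has a genuine gap. You prove $\mathfrak{B}_p\subset\mathfrak{A}_p$ directly for every $p\in(1,+\infty)$ by applying a $p$-Poincar\'e inequality, citing Remark \ref{rem-Poi}. But Remark \ref{rem-Poi} only gives the Poincar\'e inequality for $p\ge 2$ (the compactness argument it cites requires $p\ge 2$, and the iterative procedure also starts from $p=2$ and goes upward); nothing in the paper furnishes a $p$-Poincar\'e inequality for $p\in(1,2)$, let alone one with $s$-independent constant, which you also need to land on a bound of the form $e^{\omega(t-s)}\|f\|_{L^p(\mu_s)}$. The theorem itself only assumes ``the Poincar\'e inequality,'' which in context is the $p=2$ statement proved from the logarithmic Sobolev inequality. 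The paper's proof sidesteps this entirely: since (ii) and (iii) have already shown that $\mathfrak{A}_p$ and $\mathfrak{B}_p$ are independent of $p\in(1,+\infty)$, it suffices to prove $\mathfrak{B}_2\subset\mathfrak{A}_2$, and the $p=2$ Poincar\'e inequality with its explicit $s$-independent constant $2^{-1}C_2$ does that. You should either restrict your Poincar\'e step to $p=2$ and then invoke the $p$-independence from (ii)--(iii), as the paper does, or justify a $p$-Poincar\'e inequality with $s$-uniform constant for all $p$, which is a stronger ingredient than what is assumed.
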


\begin{proof}
(i) Let $p_0$ be as in the statement, fix $\omega\in \mathfrak{A}_{p_0}$,
$s,t\in I$ with $t-s\ge 1$, and $f\in L^{p_0}(\R^d,\mu_s)$ with $\overline f_s=0$.
Splitting $G(t,s)f=G(t,t-1)G(t-1,s)f$ and using the estimate in the statement, we get
\begin{align*}
\|\nabla_x G(t,s)f\|_{L^{p_0}(\R^d,\mu_t)}=&\|\nabla_x G(t,t-1)G(t-1,s)f\|_{L^{p_0}(\R^d,\mu_t)}\\
\le & C_1(1)\|G(t-1,s)f\|_{L^{p_0}(\R^d,\mu_{t-1})}\\
\le & C_1(1)M_{p_0,\omega}e^{\omega(t-s)}\|f\|_{L^{p_0}(\R^d,\mu_s)}.
\end{align*}
If $\overline f_s\neq 0$, the previous estimate follows with $C_1(1)$ being replaced by $2C_1(1)$, just applying
the above estimate to  $f-\overline f_s$ and noting that
$\|f-\overline f_s\|_{L^{p_0}(\Rd,\mu_s)}\le 2\|f\|_{L^{p_0}(\Rd,\mu_s)}$.
Hence, $\omega\in\mathfrak{B}_{p_0}$, so that $\mathfrak{A}_{p_0}\subset\mathfrak{B}_{p_0}$.

(ii) We fix $p_1,p_2\in (1,+\infty)$, such that  $1<p_1<p_2$, and $\omega\in {\mathfrak A}_{p_1}$.
Moreover, we take $\tau>0$ such that $p_2=e^{2\eta_0|r_0|\Lambda^{-1}\tau}(p_1-1)+1$. If $t>\tau+s$, then, from
Theorem \ref{th-5.2} it follows that $G(t,t-\tau)$ is a contraction from $L^{p_1}(\Rd,\mu_{t-\tau})$ to
$L^{p_2}(\Rd,\mu_t)$. Thus, using the evolution law, the hypercontractivity of the evolution operator and recalling that $G(t-\tau,s)\mathds{1}=\mathds{1}$, we get
\begin{align*}
\|G(t,s)f-\overline{f}_s\|_{L^{p_2}(\Rd,\mu_t)}=&
\|G(t,t-\tau)(G(t-\tau,s)f-\overline{f}_s)\|_{L^{p_2}(\Rd,\mu_t)}\\
\le &\|G(t-\tau,s)f-\overline{f}_s\|_{L^{p_1}(\Rd,\mu_{t-\tau})}\\
\le & M_{p_1,\omega}e^{-\omega\tau}e^{\omega(t-s)}\|f\|_{L^{p_1}(\Rd,\mu_s)}
\end{align*}
for any $f\in L^{p_2}(\Rd,\mu_s)\subset L^{p_1}(\Rd,\mu_s)$, $t>s+\tau$ and some positive constant $M_{p_1,\omega}$, independent of $f$, where $\tau$ is as above. Hence, we get
$\|G(t,s)f-\overline{f}_s\|_{L^{p_2}(\Rd,\mu_t)}
\le M_{p_1,\omega}e^{-\omega \tau}e^{\omega(t-s)}\|f\|_{L^{p_2}(\Rd,\mu_s)}$
for any $f\in L^{p_2}(\Rd,\mu_s)$ and any $t> s+\tau$. This inequality can be extended to any $t\in (s,s+\tau)$, up to possibly changing the
constant $M_{p_1,\omega}$, recalling that $G(t,s)$ is a contraction from $L^{p_2}(\Rd,\mu_s)$ into $L^{p_2}(\Rd,\mu_t)$ and $|\overline{f}_s|\le \|f\|_{L^{p_2}(\Rd,\mu_s)}$ for any $t>s\in I$ .

Viceversa, fix $\omega\in \mathfrak{A}_{p_2}$ and $f\in L^{p_1}(\Rd,\mu_s)$. By the definition of the evolution systems of measures it follows easily that $\overline{(G(r,s)f)}_r=\overline f_s$ for any $r>s$.
Moreover, since $\|\cdot\|_{L^{p_1}(\Rd,\mu_s)}\le \|\cdot\|_{L^{p_2}(\Rd,\mu_s)}$, using the hypercontractivity of the evolution operator $G(t,s)$, we can estimate
\begin{align*}
&\|G(t,s)f-\overline f_s\|_{L^{p_1}(\Rd,\mu_t)}\\
\le&\|G(t,s+\tau)G(s+\tau,s)f-\overline{(G(s+\tau,s)f)}_{s+\tau}\|_{L^{p_2}(\Rd,\mu_t)}\\
\le &M_{p_2,\omega}e^{\omega(t-s-\tau)}\|G(s+\tau,s)f\|_{L^{p_2}(\Rd,\mu_{s+\tau})}\\
\le &M_{p_2,\omega}e^{\omega(t-s-\tau)}\|f\|_{L^{p_1}(\Rd,\mu_s)},
\end{align*}
for some positive constant $M_{p_2,\omega}$ and any $t > s+\tau$.
As above, this is enough to infer that $\omega\in\mathfrak{A}_{p_1}$. Summing up, we have proved that
$\mathfrak{A}_{p_1}= \mathfrak{A}_{p_2}$ for any $1<p_1<p_2<+\infty$ and, consequently, that $\mathfrak{A}_{p}$ is independent of $p\in (1,+\infty)$.

(iii) Fix $1<p_1<p_2<+\infty$, $\omega \in \mathfrak{B}_{p_1}$ and $t\geq s+ \tau+1$,
where $\tau$ is as above. From \eqref{stimapoint} we can estimate
\begin{align*}
\|\nabla_xG(t,s)f\|_{L^{p_2}(\Rd,\mu_t)}=& \|\nabla_xG(t,t-\tau)G(t-\tau,s)f\|_{L^{p_2}(\Rd,\mu_t)}\nonumber\\
\le & C_2(\tau)\|G(t,t-\tau)|\nabla_xG(t-\tau,s)f|\|_{L^{p_2}(\Rd,\mu_t)}\nonumber\\
\le & C_2(\tau)\|\nabla_xG(t-\tau,s)f\|_{L^{p_1}(\Rd,\mu_{t-\tau})}\nonumber \\
\le & N_{p_1,\omega}C_2(\tau)e^{\omega (t-s-\tau)}\|f\|_{L^{p_1}(\Rd,\mu_s)}\nonumber\\
\le & N_{p_1,\omega}C_2(\tau)e^{-\tau\omega}e^{\omega (t-s)}\|f\|_{L^{p_2}(\Rd,\mu_s)}
\end{align*}
for any $f \in C^1_b(\Rd)$ and some positive constant $N_{p_1,\omega}$, independent of $f$, $s$ and $t$. The density of $C^1_b(\R^d)$ into $L^{p_2}(\Rd,\mu_s)$
allows to extend the previous estimate to any $f\in L^{p_2}(\Rd,\mu_s)$. Again, splitting $\nabla_xG(t,s)f=\nabla_xG(t,t-1)G(t-1,s)f$,
using estimate \eqref{stimapoint} and the contractivity of $G(t-1,s)$ from $L^{p_2}(\Rd,\mu_s)$ to $L^{p_2}(\Rd,\mu_{t-1})$ we cover also the case $t\in (s+1,s+1+\tau)$.
Hence, $\omega\in\mathfrak{B}_{p_2}$.

Viceversa, suppose that $\omega\in \mathfrak{B}_{p_2}$ and  $t\geq s+\tau+1$. Then,
\begin{align*}
\|\nabla_xG(t,s)f\|_{L^{p_1}(\Rd,\mu_t)}\le & \|\nabla_xG(t,s)f\|_{L^{p_2}(\Rd,\mu_t)}\\
= & \|\nabla_xG(t,s+\tau)G(s+\tau,s)f\|_{L^{p_2}(\Rd,\mu_t)}\\
\le & N_{p_2,\omega}e^{\omega(t-s-\tau)}\|G(s+\tau,s)f\|_{L^{p_2}(\Rd,\mu_{s+\tau})}\\
\le & N_{p_2,\omega}e^{-\omega \tau}e^{\omega(t-s)}\|f\|_{L^{p_1}(\Rd,\mu_{s})}
\end{align*}
for  some positive constant $N_{p_2,\omega}$, independent of $f\in L^{p_1}(\Rd,\mu_{s})$, $s$ and $t$.
This is enough to infer that $\omega\in\mathfrak{B}_{p_1}$. We have so proved that $\mathfrak{B}_{p_1}=\mathfrak{B}_{p_2}$ for any $1<p_1<p_2<+\infty$ and this implies that  $\mathfrak{B}_p$ is independent of
$p\in (1,+\infty)$.

(iv) In view of (i)-(iii), to prove that $\mathfrak{A}_p=\mathfrak{B}_p$ for any $p\in (1,+\infty)$, it suffices to show that
$\mathfrak{B}_{2}\subset\mathfrak{A}_{2}$.
Fix $\omega\in \mathfrak{B}_{2}$, $s,t\in I$, with $t-s\ge 1$ and $f\in L^{2}(\R^d,\mu_s)$.
Applying the Poincar\'e inequality (with $\mu_s$ and $f$ replaced by $\mu_t$ and $G(t,s)f$, respectively) and
observing that $\overline{(G(t,s)f)}_t=\overline{f}_s$, we get
\begin{align*}
\|G(t,s)f-\overline f_s\|_{L^{2}(\R^d,\mu_t)}=&\|G(t,s)f- \overline{(G(t,s)f)}_t\|_{L^{2}(\R^d,\mu_t)}\notag\\
\le &2^{-1} C_{2}\|\nabla_x G(t,s)f\|_{L^{2}(\R^d,\mu_t)}\notag\\
\le &2^{-1} C_{2}N_{2,\omega}e^{\omega(t-s)}\|f\|_{L^{2}(\R^d,\mu_s)}.
\end{align*}
This is enough to infer that $\omega\in {\mathfrak A}_{2}$ and we are done.
\end{proof}

\begin{remark}{\rm
\begin{enumerate}[\rm(i)]
\item
Under the assumptions of Theorem \ref{thm_log_sob}, all the conditions in Theorem \ref{Asy} are satisfied and
estimate \eqref{stimapoint} implies that $r_0\in \mathfrak{B}_p$. From the equality $\mathfrak{A}_p=\mathfrak{B}_p$ we deduce that, for any $f \in C_b(\Rd)$ and $p>1$, $\|G(t,s)f-\overline f_s\|_{L^p(\Rd,\mu_t)}$ decays exponentially to zero, as $t\to +\infty$.
\item
The equality $\mathfrak{A}_p=\mathfrak{B}_p$ fails when $p=1$, even in the autonomous case. For instance, in the case of the Ornstein-Uhlenbeck operator $(\A\zeta)(x) := \zeta''(x) - x\zeta'(x)$ we have $d\mu_t=(2\pi)^{-1/2}e^{-x^2/2}dx$ for every $t$, and every $\lambda<0$ is an eigenvalue of the realization of $\A$ in $L^1( \R, \mu)$ as shown in \cite{MPP}. This implies that $\mathfrak{A}_1$ cannot contain negative numbers, so that $\mathfrak{A}_1=[0, +\infty)$. On the other hand, in this case $r_0= -1 \in \mathfrak{B}_1$ by point (i).
\end{enumerate}}
\end{remark}

\medskip

Under the assumptions of Theorem \ref{th-5.2}, Theorem \ref{Asy} provides us with a very strong result, since allows us to prove that
$\|G(t,s)f-\overline{f}_s\|_{L^p(\Rd,\mu_t)}$ decays to zero as $t\to +\infty$ with an exponential rate. On the other hand the assumptions in
Theorem \ref{th-5.2} may sound rather restrictive since the diffusion coefficients are assumed to be bounded and independent of $x$.
As we have already explained this condition is {\it almost} necessary to prove the pointwise estimate \eqref{stimapoint} which is the crucial
tool to prove Theorem \ref{th-5.2}.

The results in \cite{LorLunZam10Asy}, which deals with the case when the coefficients are periodic with respect to the time variable, show that
$\|G(t,s)f-\overline{f}_s\|_{L^p(\Rd,\mu_t)}$ decays to zero even without requiring the validity of \eqref{stimapoint}. Motivated by that result,
in \cite{LorLunSch16Str}, the convergence of $\|G(t,s)f-\overline{f}_s\|_{L^p(\Rd,\mu_t)}$ to zero has been proved also in the nonperiodic setting under the following
conditions on the coefficients of the operator $\A$:
\begin{hypotheses}
\label{hyp-fin}\begin{enumerate}[\rm (i)]
\item
The coefficients $q_{ij}$ and $b_j$ $(i,j=1,\ldots, d)$ belong to $C^{\alpha/2,1+\alpha}_{\rm loc}(I\times\R^d)$;
\item
$q_{ij}\in C_b(I\times B_R)$, $D_hq_{ij}, b_j\in C_b(I;L^p(B_R))$ for any $i,j,h\in\{1,\ldots,d\}$, any $R>0$ and some $p>d+2$;
\item
Hypothesis $\ref{hyp-1}(ii)$ is satisfied;
\item
there exist a positive function $\varphi:\Rd\to\R$, blowing up as $|x|\to +\infty$ and positive constants $a_1$ and $a_2$ such that
$\A\varphi\le a_1-a_2\varphi$ in $\R^{d+1}$;
\item
there exist constants $C_0>0$ and $r_0\in\R$ such that $|\nabla_xq_{ij}|\le C_0\nu$ in $I\times\R^d$ for any $i,j=1,\ldots,d$;
\item
there exists a constant $M>0$ such that either $|q_{ij}(t,x)|\le M(1+|x|)\varphi(x)$ $(i,j=1,\ldots,d)$ and $\langle b(t,x),x\rangle \le C(1+|x|^2)\varphi(x)$
for any $(t,x)\in I\times\Rd$ or $|q_{ij}(t,x)|\le C$ $(i,j=1,\ldots,d)$ for any $(t,x)\in I\times\R^d$.
\end{enumerate}
\end{hypotheses}

The strategy used in \cite{LorLunZam10Asy} is different to that illustrated here (even if it still used the gradient estimate
$|\nabla_xG(t,s)f|^p\le K_0\max\{1,(t-s)^{-p/2}\}G(t,s)|f|^p$)
 and is based on argument from semigroup theory applied to the so-called
evolution semigroup ${\mathscr T}(t)$, which is defined when $I=\R$ by\footnote{Throughout the section, whenever we consider the evolution operator, we assume that the coefficients
are defined in the whole $\R^{d+1}$, in such a way that the assumptions that we use are satisfied with $I=\R$. This is not a restriction since the coefficients can be extended
to $\R^{d+1}$ without adding further conditions.}
$({\mathscr T}(t)f)(s,x)=(G(s,s-t)f(s,\cdot))(x)$ for any $t\ge 0$, $(s,x)\in\R^{d+1}$ and $f\in C_b(\R^{d+1})$.
This semigroup can be extended to the $L^p$-spaces related to the unique Borel measure $\mu$ such that
\begin{eqnarray*}
\mu(A\times B)=\int_A\mu_t(B)dt
\end{eqnarray*}
for any pair of Borel sets $A\subset\R$ and $B\subset\Rd$. This follows from the invariance of the evolution system of measures $\{\mu_t: t\in I\}$ which implies that
\begin{eqnarray*}
\int_{\R^{d+1}}{\mathscr T}(t)fd\mu=\int_{\Rd}fd\mu,\qquad t>0,\;\,f\in C^{\infty}_c(\R^{d+1}).
\end{eqnarray*}
Note that $\mu$ is not a probability measure since $\mu(\R^{d+1})=+\infty$.
The arguments used in \cite{LorLunZam10Asy} relies on the fact that, under quite general assumptions on the coefficients of the operator $\A$,
\begin{equation}
\lim_{t\to +\infty}\|\nabla_x{\mathscr T}(t)f\|_{L^p(\Rd,\mu)}=0,\qquad\;\, f\in L^p(\R^{d+1},\mu).
\label{sem}
\end{equation}
This result is proved using only tools from semigroup theory (for the case $p=2$) and an interpolation argument in the case $p\neq 2$. If one applies \eqref{sem}
to the functions $\vartheta_m f$, where $f\in C^{\infty}_c(\Rd)$ and $\vartheta_m\in C^{\infty}_c(\R)$ satisfies the condition
$\chi_{[-m,m]}\le \vartheta_m\le\chi_{[-m-1,m+1]}$ for any $m\in\N$, one easily obtains that there exists a sequence $(t_n)$ such that
\begin{equation}
\lim_{n\to +\infty}\int_{\Rd}\rho(s+t_n,\cdot)|\nabla_xG(s+t_n,s)f|^pdx=0
\label{sem-1}
\end{equation}
for any $s\in\Rd\setminus N$, where $N$ is a negligible set with respect to the Lebesgue measure and $\rho$ is the continuous function in Remark \ref{rem-luca}, which is the density of $\mu$ with respect to the Lebesgue measure.
In the periodic case it is straightforward to infer that the sequence $(\rho(s_+t_n,\cdot))$ is bounded from below by a positive constant in any ball of $\Rd$. In the nonperiodic case, the proof
of this property demands somehow more delicate arguments and the use of Hypothesis \ref{hyp-fin}(ii). In any case, from \eqref{sem-1} we conclude that
the sequence $(|\nabla_xG(s+t_n,s)f|)$ vanishes in $L^p(B_k)$ for any $k\in\N$ as $n\to +\infty$. Since the sequence $(\|G(s+t_n,s)f\|_{L^p(\Rd,\mu_{s+t_n})})$ is bounded and $\rho$ is continuous in
$\R^{d+1}$, the sequence $(G(s+t_n,s)f)$ is bounded in $W^{1,p}(B_k)$ for any $k\in\N$. By the Rellich-Kondrachov theorem it follows that, up to a subsequence,
$G(s+t_n,s)f$ converges in $W^{1,p}_{\rm loc}(\Rd)$ to a constant function $g(s)$ and the convergence is also local uniform if we take $p>d$. To identify
$g(s)$ with $f_s$, it suffices to use the invariance property of $\{\mu_t: t\in I\}$ to write
\begin{eqnarray*}
f_s-g(s)=\int_{\Rd}(f-g(s))d\mu_s=\int_{\Rd}G(s+t_n,s)(f-g(s))d\mu_{s+t_n}
\end{eqnarray*}
and let $n$ tend to $+\infty$. Since the function $t\mapsto \|G(t,s)f-\overline f_s\|_{L^p(\Rd,\mu_s)}$ is decreasing in $(s,+\infty)$, from the above result we conclude that
$\|G(t,s)f-\overline f_s\|_{L^p(\Rd,\mu_s)}$ tends to $0$ as $t\to +\infty$ for any $s\not\in N$ and $f\in C^{\infty}_c(\Rd)$.
By density, we can replace $C^{\infty}_c(\Rd)$ with $L^p(\Rd,\mu_s)$ and the evolution law allow to remove the condition $s\notin\N$.

\medskip

To conclude this section, we stress that in the limit $\lim_{t\to +\infty}\|G(t,s)f-\overline f_s\|_{L^p(\Rd,\mu_s)}=0$ also the $L^p$-space varies with $t$.
It thus makes sense to (i) study the behaviour as $t\to +\infty$ of the measures $\mu_t$ determining the point limit
(ii) establish whether the convergence of $G(t,s)f$ to $\overline f_s$ may be guaranteed also in some {\it fixed} $L^p$-space.
In the periodic case (i) it is easy since
the function $t\mapsto \mu_t$ is periodic.  In the general case, the previous points have been addressed in
\cite{AngLorLun,LorLunSch16Str}. Here, we state the (more general) result proved in \cite{LorLunSch16Str}.
Under Hypotheses \ref{hyp-fin} and assuming that the coefficients $q_{ij}$ and $b_j$ $(i,j=1,\ldots,d)$ belong to
$C^{\alpha/2,\alpha}_b([s_0,+\infty)\times B_R)$ for any $R>0$ and some $s_0\in I$, and they converge pointwise in $\Rd$ as $t\to +\infty$,
in \cite[Proposition 4.3]{LorLunSch16Str} it has been proved that the density of $\mu_t$ converges to a function $\rho_{\infty}$
locally uniformly in $\Rd$ and in $L^1(\Rd)$. $\rho_{\infty}$ is the density (with respect to the Lebesgue measure) of
the invariant measure $\mu_{\infty}$ of the semigroup associated with the elliptic operator whose coefficients are the limit as $t\to +\infty$ of the coefficients
of the operator $\A(t)$. This result has been used to answer point (ii). More precisely, in \cite[Theorem 4.4]{LorLunSch16Str} it has been proved that for any $f\in C_b(\Rd)$, $G(t,s)f$ converges to $\overline f_s$, as $t\to +\infty$, in $L^p(\Rd,\mu_{\infty})$ for any $p\in [1,+\infty)$ and any $s\in I$.



\bigskip
\begin{center}

\end{center}

\end{document}